\newcommand{\Alt}{\mathop{\mathrm{Alt}}}
\newcommand{\Sym}{\mathop{\mathrm{Sym}}}
\newcommand{\Sn}{\mathrm{Sym}(n)}
\newcommand{\An}{\mathrm{Alt}(n)}
\DeclareMathOperator{\BBF}{\mathbb{F}}
\DeclareMathOperator{\tr}{\mathrm{tr}}
\DeclareMathOperator{\Out}{Out}
\newcommand{\Aut}{\mathop{\mathrm{Aut}}}
\DeclareMathOperator{\out}{Out}
\DeclareMathOperator{\soc}{soc}
\DeclareMathOperator{\rad}{Rad} 
\DeclareMathOperator{\GL}{GL} 
\DeclareMathOperator{\PSU}{PSU}
\DeclareMathOperator{\PGL}{PGL}
\DeclareMathOperator{\PSL}{PSL}
\DeclareMathOperator{\PGU}{PGU}
\DeclareMathOperator{\GU}{GU}
\DeclareMathOperator{\PGamL}{P\Gamma L}
\DeclareMathOperator{\GO}{GO}
\DeclareMathOperator{\PGO}{PGO}
\DeclareMathOperator{\POmega}{P\Omega}
\DeclareMathOperator{\PCGamSp}{PC\Gamma Sp}
\DeclareMathOperator{\Sp}{Sp}
\DeclareMathOperator{\PSp}{PSp} 
\DeclareMathOperator{\sym}{Sym}
\DeclareMathOperator{\AGL}{AGL}
\DeclareMathOperator{\alt}{Alt}
\renewcommand{\AA}{\mathcal A}
\renewcommand{\SS}{\mathcal S}
\newcommand{\Supp}{\mathrm{Supp}}
\newcommand{\BB}{\mathcal{B}}
\newcommand{\NN}{\mathcal{N}}
\newcommand{\MAGMA}{{\sc Magma} \,}
\newcommand{\MAGMAn}{{\sc Magma}}
\newtheorem{thm}{Theorem}[section]
\newtheorem{mainthm}{Theorem}
\newtheorem{lemma}[thm]{Lemma}
\newtheorem{prop}[thm]{Proposition}
\theoremstyle{definition} 
\newtheorem{notation}[thm]{Notation}
\theoremstyle{definition}
\newtheorem{defn}[thm]{Definition}
\newtheorem{question}[]{Question}
\numberwithin{equation}{section}
\renewcommand{\footnote}{\endnote}
\newcommand{\ignore}[1]{}\makeglossary
\begin{document}
	\title[]{ Base sizes of primitive permutation groups}

\author[M. Moscatiello]{Mariapia Moscatiello}
\address{Mariapia Moscatiello, Dipartimento di Matematica, Università di Bologna, Piazza di Porta San Donato 5, 40126 Bologna, Italy} 
\email{mariapia.moscatiello@unibo.it}

\author[ Colva M. Roney-Dougal]{Colva M. Roney-Dougal}
\address{Colva M. Roney-Dougal, School of Mathematics and Statistics,  The University of St Andrews,\newline    
	North Haugh, St Andrews, Fife, KY16 9SS, Scotland}
\email{colva.roney-dougal@st-andrews.ac.uk}
\date{\today}

\keywords{primitive groups; base size; classical groups; simple groups}

\makeatletter
\@namedef{subjclassname@2020}{%
  \textup{2020} Mathematics Subject Classification}
\makeatother
\subjclass[2020]{20B15, 20B10}


\begin{abstract} Let $G$ be a permutation group, acting on a set
  $\Omega$ of size $n$. A subset $\BB$ of $\Omega$ is a \emph{base} for
  $G$ if 
the  pointwise stabilizer $G_{(\BB)}$ is trivial. Let $b(G)$ be the
minimal size of a base for $G$.  A subgroup $G$ of $\Sn$ is
\emph{large base} if there exist integers $m$ and $r \geq 1$  such
that $\alt(m)^r \unlhd G \leq \sym(m)\wr \sym(r)$, 
where the action of $\sym(m)$ is on $k$-element subsets of
$\{1,\dots,m\}$ 
 and the wreath product acts with product action. In this paper we
 prove that if $G$ is primitive and not large base, then either $G$ is
 the Mathieu group $\mathrm{M}_{24}$ in its natural action on $24$
 points, or 
$b(G)\le \lceil \log n\rceil+1$. Furthermore, we show that
there are infinitely many primitive groups $G$ that are not large base
for which $b(G) > \log n + 1$, so our bound is optimal. 
\end{abstract}

\maketitle

\vspace{-10pt}

\section{Introduction}

Let the permutation group $G$ act on a set
  $\Omega$ of size $n$. A subset $\BB$ of $\Omega$ is a \emph{base} for
  $G$ if 
the  pointwise stabilizer $G_{(\BB)}$ is trivial. Let $b(G, \Omega)$,
or just $b(G)$ when the meaning is clear, be the
minimal size of a base for $G$.  

In the 19th century, a  problem that attracted a lot of attention was
that of bounding the order of a finite primitive permutation group. It is easy to show that
$|G|\le n^{b(G)}$, so one can 
find an upper bound on the order of a permutation group
by bounding the minimal base size. 
One of the earliest results in this direction is a theorem of
Bochert \cite{Bo} from 1889, which states that 
if $G$ is a primitive permutation group of degree $n$ not containing
the alternating group $\alt(n)$, then $b(G)\le n/2$.

Bases also arise naturally in other contexts, which also benefit from
good upper bounds on base size. For example,
they have been used extensively in the computational study of finite permutation groups, where the problem of calculating base sizes has
important practical applications. The knowledge of how an element
$g$ of $G$ acts on a base $\BB$ completely determine the action of $g$ on
$\Omega$, so once a base and a
related data-structure called a strong generating set are known for
$G$, we may store elements of $G$ as $|\BB|$-tuples, rather than
as permutations, of $\Omega$.

A permutation group $G$ is 
\emph{large base} if there exist integers $m$ and $r \geq 1$  such
that $\alt(m)^r \unlhd G \leq \sym(m)\wr \sym(r)$, 
where the action of $\sym(m)$ is on $k$-element subsets of
$\{1,\dots,m\}$ for some $k$, 
 and if $r > 1$ then  $G$ has product action.  Note that this
 includes the natural actions of $\An$ and $\Sn$. 

Using the Classification of Finite Simple Groups (CFSG),
and building on earlier work by Cameron \cite{cam}, in 1984 Liebeck \cite{Lie1} 
proved  the
remarkable result that if
$G$ is a primitive group of degree $n$ that
 is not large base, then $b(G)\le 9 \log n$.
(In this paper, all logarithms are to base $2$, unless otherwise indicated.) 
Much more recently, Liebeck, Halasi and Mar\'oti showed in \cite{HaLiMa}
that for most non-large-base primitive groups $G$, the base size $b(G) \leq 2 \lfloor \log n
\rfloor + 26$; the second author and Siccha then noted in
\cite{CMRDSiccha} that
this bound applies to all primitive groups that are not large base. 

The main result of this paper is as follows. 

\begin{mainthm}\label{thm:main}
	Let $G$ be a primitive permutation group of degree $n$. If $G$
        is not large base, then either $G$ is the Mathieu group
        $\mathrm{M}_{24}$ in its $5$-transitive action of degree $24$,
        or $b(G) \leq \lceil \log   n\rceil+1$. Furthermore, there are infinitely many such
        groups $G$ for which $b(G) > \log n + 1$. 
\end{mainthm}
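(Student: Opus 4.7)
The plan is to invoke the O'Nan--Scott classification of primitive permutation groups, which partitions them into affine, almost simple, diagonal, product action, and twisted wreath types. Together with the pre-existing bound $b(G)\le 2\lfloor \log n\rfloor + 26$ from \cite{HaLiMa, CMRDSiccha}, this means that within each infinite family of non-large-base primitive groups, the target bound $b(G) \leq \lceil \log n \rceil + 1$ can be violated only for finitely many degrees $n$, so a combined theoretical/computational attack is feasible. Throughout, my principal tool will be the Liebeck--Shalev probabilistic method: it suffices to show that $Q(G,b) := \sum_{x} \mathrm{fpr}_\Omega(x)^b \cdot |x^G| < 1$, where the sum ranges over representatives of conjugacy classes of prime-order elements of $G$ and $b = \lceil \log n \rceil + 1$.

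\medskip

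The almost simple case is where the bulk of the work lies. For $G$ almost simple with socle $T$, I would estimate $Q(G,b)$ using the fixed-point ratio bounds of Burness and collaborators (for classical, exceptional, alternating, and sporadic $T$). For every sufficiently large degree $n$ in each infinite family, the resulting sum is strictly less than $1$, giving $b(G) \leq \lceil \log n\rceil + 1$. The residual small cases are verified by direct computation in \MAGMAn using stored maximal subgroup data. The purpose of this combined step is to pin down $M_{24}$ in its degree-$24$ action as the unique exception: its sharp $5$-transitivity forces $b(M_{24})=7$, exceeding $\lceil \log 24\rceil +1 = 6$.

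\medskip

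For the remaining O'Nan--Scott types, the plan is as follows. In the affine case $G=V\rtimes H$ of degree $n=p^d$, one has $b(G) \leq b(H, V\setminus\{0\}) + 1$, and after restricting $H \leq \GL_d(p)$ via the Aschbacher classification one shows that $H$ admits a regular orbit on $V^k$ for some $k \leq \lceil d\log p\rceil = \lceil \log n\rceil$. For product action groups $G\leq H\wr\sym(r)$ on $\Delta^r$ I would use the Bailey--Cameron description of $b(G)$ as the least $b$ for which $H$ has at least $r$ pairwise inequivalent regular orbits on $\Delta^b$; the slack in the inequality $r\log|\Delta| \gg b(H)$ makes the bound easy to obtain inductively in $r$ once the bound is known for $H$. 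Diagonal and twisted wreath types succumb to direct arguments on the direct-product socle.

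\medskip

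For the optimality assertion I would exhibit an explicit infinite family of primitive, non-large-base groups with $b(G)>\log n+1$. Natural candidates arise from small-degree actions of classical groups or affine constructions where $n$ is not a power of $2$ (so $\lceil \log n\rceil > \log n$) and the base size attains the ceiling bound for structural reasons; direct computation of $b(G)$ and $n$ verifies the strict inequality. The hardest part of the whole argument, in my view, is tightening the fixed-point ratio estimates in the almost simple case for classical socles of small Lie rank, where the generic Burness bounds are not sharp enough: there one must work case-by-case with explicit character-theoretic or subgroup-structural information, combined with a careful enumeration of small-degree exceptions to guarantee that $M_{24}$ is the only one.
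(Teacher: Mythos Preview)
Your overall O'Nan--Scott decomposition matches the paper's, and the treatment of non-standard almost simple actions via existing Burness-type bounds is essentially the same. However, your principal tool---the Liebeck--Shalev probabilistic method---will not succeed for the \emph{standard} actions, and this is precisely where the real work lies.

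Consider $G=\PGL_d(2)$ acting on $1$-spaces, so $n=2^d-1$ and the target is $b\le d+1$. A transvection fixes every $1$-space in a hyperplane, so its fixed-point ratio is $(2^{d-1}-1)/(2^d-1)\approx 1/2$, and there are on the order of $2^{2d-2}$ transvections. Their contribution to $Q(G,d+1)$ is therefore roughly $2^{2d-2}\cdot(1/2)^{d+1}=2^{d-3}\to\infty$, so $Q(G,b)<1$ fails outright. The same obstruction arises for $\Sp_{2m}(2)$ on cosets of $\GO^\pm_{2m}(2)$ and more generally for any subspace action of a classical group over a small field: the fixed-point ratios are bounded away from zero by a constant depending only on $q$, while the number of prime-order elements grows with $d$. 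Relatedly, your claim that the pre-existing bound $b(G)\le 2\lfloor\log n\rfloor+26$ reduces each family to finitely many degrees is incorrect: $2\lfloor\log n\rfloor+26$ exceeds $\lceil\log n\rceil+1$ for all $n$, so no reduction follows.

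The paper's approach to the standard classical actions is entirely different and constructive: it writes down explicit bases for $\PGL$, $\PGU$, $\PSp$, $\PGO^\varepsilon$ acting on totally singular or non-degenerate $1$- and $2$-spaces (and for $\Sp_{2m}(q)$ on cosets of $\GO^\pm_{2m}(q)$), obtaining bounds such as $b(\PGO^\varepsilon_d(q),\mathcal{S}(G,1))=d-1$ exactly. These are then combined with precise degree formulas from \cite{burngiu} and the Halasi--Liebeck--Mar\'oti bounds for larger $k$ to verify $b(G)<\log n+1$ action by action. The infinite family witnessing optimality is also made completely explicit: it is $\Sp_{2m}(2)$ on cosets of $\GO^-_{2m}(2)$, where the paper proves $b(G)=2m=\lceil\log n\rceil+1>\log n+1$ by exhibiting a base of size $2m$ and showing (via a geometric argument about radicals of hyperplane intersections) that no smaller base exists.
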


If $G$ is $\mathrm{M}_{24}$ in its
$5$-transitive action of degree 24  then $b(G)=7$.
In Theorem~\ref{thm:final} we shall completely classify the
non-large-base primitive groups $G$ for which $b(G) > \log n + 1$:
there is one infinite family, and three Mathieu groups. 


Our notation for groups is generally standard: for the orthogonal
groups, by $\GO_d^\varepsilon(q)$ we denote the full isometry group of
our standard quadratic form of type $\varepsilon$, as given in
Definition~\ref{def:standardform}.

\begin{defn} \label{subspaceaction}
Let $G$ be almost simple with socle $G_0$, 
 a classical group with natural module $V$
 over a field  of characteristic $p$. 
A subgroup $H$ of $G$ not containing $G_0$ is a \emph{subspace}
subgroup if for each maximal subgroup $M$ of $G_0$ containing $H\cap
G_0$ one of the following holds. 
\begin{itemize}
	\item[(1)] $M=G_U$ for some proper nonzero subspace $U$ of
          $V$, where $U$ is either totally singular, or non-degenerate, or, if
          $G$ is orthogonal and $p = 2$, a nonsingular 1-space. If
          $G_0 = \PSL_d(q)$ then we shall consider all subspaces of
          $V$ to be totally singular.
	\item[(2)] $G_0 =\Sp_{d}(2^f)$ and $M\cap G_0 =\GO^{\pm}_{d}(2^f)$.
\end{itemize}

A transitive action of $G$ is a \emph{subspace
  action}  if the point stabiliser is a subspace subgroup of $G$.
\end{defn} 

\begin{defn}\label{def:standard}
	Let $G$ be almost simple with socle $G_0$. 
A transitive action of $G$ on $\Omega$ is \emph{standard} 
 if, up to equivalence of actions, one of the following holds, and is
 \emph{non-standard} otherwise.
\begin{enumerate}
\item $G_0=\alt(\ell)$ and $\Omega$ is an orbit of subsets or partitions of $\{1,\dots,\ell\}$;
\item $G$ is a classical group in a subspace action.
\end{enumerate}
\end{defn}

Cameron and Kantor conjectured  in \cite{cam, camka} that there exists an absolute
constant $c$ such that if $G$ is almost simple with a faithful primitive non-standard action on a finite set $\Omega$
then
$b(G) \le  c$. 
In \cite[Theorem 1.3]{LIETYPE}, Liebeck and Shalev  proved this
conjecture, but without specifying the 
constant $c$. Later, in a series of papers \cite{
Burness07, BurnessOBrienWilson10, BGS11},
 Burness
and others proved  that $b(G)\leq 7$, with equality if and only if $G$ is
$\mathrm{M}_{24}$ in its $5$-transitive action of
degree 24; that is, the Cameron-Kantor conjecture is true with
the constant $c = 7$.

In stark contrast with the non-standard case, the base size
of a group with a standard action can be arbitrarily large. The bulk
of this article  therefore concerns such actions. For many of the
standard actions we shall use results due to Halasi,
Liebeck and Mar\'oti \cite{HaLiMa}, however
we sometimes  require more precise bounds.

\begin{notation}\label{not:SandN}
  Let $G$ be a classical group, with natural module $V$. We shall
  write $\SS(G, k)$ for a $G$-orbit of totally singular subspaces of
  $V$ of 
  dimension $k$, and $\NN(G, k)$ for a $G$-orbit of
  non-degenerate or non-singular subspaces of $V$ of dimension $k$. For the
  orthogonal groups, let $W$ be a space in the orbit if $dk$ is even, and the
  orthogonal complement of such a space if $dk$ is odd.  Then
  we write $\NN^\epsilon(G, k)$,
  with $\epsilon \in \{+, -\}$, to indicate that the restriction
  of the form to $W$ is of type $\epsilon$:  if $d$ is odd then the symbol
  $\NN(G, k)$ is not used, since $k$ or $d-k$ is even. 
\end{notation}    

The next result is a key tool in the proof of Theorem~\ref{thm:main},
but may be of independent interest. It will follow immediately from the results
in Section~\ref{sec:onetwo}: bounds for smaller dimensions
may be found there. 

\begin{mainthm}\label{thm:base_summary}
  Let $G$ be one of $\PGL_d(q)$, $\PGU_d(q)$, $\PSp_d(q)$, or
  $\PGO^\varepsilon_d(q)$. Let $k \in \{1, 2\}$, and let $\Omega$ be $\SS(G, k)$ or
  $\NN^{\epsilon}(G, k)$, with $\epsilon$
  either $+$, $-$, or blank. 
  \begin{enumerate}
     \item Assume that $d \ge 5$, that $G$ is $\PGL_d(q)$, and that $k = 2$. 
      Then $b(G) \leq
      \lceil d/2 \rceil + 2$. 
    \item Assume that $d \ge 3$, that $G$ is $\PGU_d(q)$ or
      $\PSp_d(q)$, 
      and that
      $k = 1$. Then $b(G) \leq d$.
    \item Assume that $d \ge 6$, that $G$ is $\PGO^\varepsilon_d(q)$, and that $k =
      1$. Then $b(G) =
      d-1$. 
    \item Assume that $d \ge 7$, that $G$ is not $\PGL_d(q)$, and that $k = 2$. Then
      $b(G) \le \lceil d/2 \rceil$. 
      \end{enumerate}
    Additionally, if $q$ is even, $d \ge 6$, and $\Omega$ is the right coset space
      of $\GO^{\pm}_d(q)$ in $G = \Sp_d(q)$, then $b(G) = d$.
 \end{mainthm}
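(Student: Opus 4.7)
The plan is to deduce Theorem~\ref{thm:base_summary} from a sequence of more refined propositions in Section~\ref{sec:onetwo}, each of which handles one of the classical families ($\PGL$, $\PGU$, $\PSp$, or $\PGO^\varepsilon$) for one of the two values $k \in \{1, 2\}$ and each relevant orbit $\SS(G,k)$ or $\NN^\epsilon(G, k)$. The theorem as stated is then simply the harvest of those propositions under the displayed dimension hypotheses, together with the one auxiliary statement about $\Sp_d(q)$ acting on cosets of $\GO^\pm_d(q)$ in even characteristic.

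For each individual upper bound the basic tactic is to construct an explicit base. I would fix a basis of the natural module $V$ adapted to the relevant form ($\PGL$: arbitrary; $\PSp$ and $\PGU$: a hyperbolic basis, with an anisotropic vector in odd dimension; $\PGO^\varepsilon$: an orthogonal or hyperbolic basis according to $\varepsilon$ and the parity of $q$). Then choose a collection $\BB = \{U_1, \ldots, U_b\}$ of subspaces of the prescribed dimension $k$ and type whose sum is $V$, expressed in this basis. The pointwise stabiliser of $\BB$ in $\GL(V)$ consists of elements block-diagonal with respect to the $U_i$; for $k=1$ this means diagonal in the chosen basis, and writing such an element as $\mathrm{diag}(\lambda_1, \ldots, \lambda_d)$, form-preservation gives multiplicative relations $\lambda_i\overline{\lambda_j} = 1$ (unitary), $\lambda_i \lambda_j = 1$ (symplectic or bilinear-orthogonal), or $\lambda_i^2 = 1$ (orthogonal, quadratic form) between eigenvalues whose basis vectors are \emph{incident} via a non-zero form value. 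One chooses $\BB$ so that the resulting form-incidence graph is connected and, in the symplectic or orthogonal cases, carries a quadratic witness; this reduces the pointwise stabiliser to scalars and hence trivialises it modulo the centre. For $k=2$ the stabiliser is block-diagonal with $2\times 2$ blocks, and one adds further 2-spaces (two extra in the linear case, one in the others) to reduce each block to a scalar, explaining the discrepancy between parts (1) and (4).

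For the equalities in part (3) and the auxiliary $\Sp_d(q)$ statement, the matching lower bounds show that the pointwise stabiliser of any smaller collection is non-trivial. In the orthogonal case on 1-spaces, any $d-2$ 1-spaces span a subspace $W$ of dimension at most $d-2$; the perpendicular $W^\perp$ has dimension at least two, and accommodates either a non-trivial reflection (when $W$ is non-degenerate) or a non-trivial unipotent element fixing $W$ pointwise (when $W$ is degenerate). In either case the corresponding image in $\PGO_d^\varepsilon(q)$ is non-trivial, giving $b \ge d-1$. For the $\Sp_d(q)$-action on cosets of $\GO_d^\pm(q)$ in even characteristic, I would use the identification of these cosets with non-singular quadratic refinements of the symplectic form and show, via transvection centralisers, that fewer than $d$ such cosets must share a non-trivial common stabiliser.

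The step I expect to be most delicate is the orthogonal case of part (4), where two orbits of non-degenerate 2-spaces coexist with a totally singular orbit and the bound $\lceil d/2 \rceil$ is too tight to allow any slack in the construction. Here one must choose $\lceil d/2 \rceil$ 2-spaces whose form-graph is both connected and carries enough quadratic witnesses to trivialise the stabiliser modulo scalars; in characteristic $2$ the replacement of non-degenerate 1-spaces by non-singular ones in the analogous part (3) construction, and the different structure of $\GO^\varepsilon_d(2^f)$, add further complications. Boundary cases of small $d$ not covered by the stated hypotheses may require direct verification, possibly with \MAGMA.
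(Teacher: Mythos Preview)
Your overall plan matches the paper: Theorem~\ref{thm:base_summary} is indeed harvested from the lemmas of Section~\ref{sec:onetwo}, each exhibiting an explicit base for one family and orbit type, with separate lower-bound arguments for the two equalities.

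The gap is in your proposed mechanism for the upper bounds. You describe taking $b$ subspaces whose sum is $V$ so that the stabiliser is block-diagonal, and then using a form-incidence graph to collapse the blocks to scalars. This does not reach the stated bounds in parts (3) and (4). In part (3), $d-1$ one-spaces span at most a hyperplane, so the stabiliser is not diagonal in any basis of $V$; the paper's bases (Tables~\ref{tab:sing_one} and \ref{tab:nondeg1}) are not coordinate axes but overlapping configurations such as $\{\langle e_1\rangle,\langle f_1\rangle,\langle e_1+e_i\rangle,\langle e_1+f_j\rangle,\ldots\}$, and the reduction runs through Lemma~\ref{lem:orthogonal} (on triples $(u,v,w)$ with $w\in\langle u,v\rangle^\perp$) rather than a form-incidence graph. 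In part (4), a direct-sum decomposition into $2$-spaces already costs $\lceil d/2\rceil$ subspaces, so your ``one extra'' would yield only $\lceil d/2\rceil+1$. The paper's $2$-space bases (Tables~\ref{tab:k=2sing}, \ref{tab:k=2nonsing}, \ref{tab:k=2minustype}) are again deliberately overlapping, e.g.\ $V_1=\langle e_1,e_2\rangle$, $V_2=\langle f_1,f_2\rangle$, $W_i=\langle e_1+e_i,\,e_2-f_1+f_i\rangle$, and the proof exploits intersections such as $\langle V_1,V_2,W_i\rangle\cap\langle V_1,V_2\rangle^\perp=\langle e_i,f_i\rangle$ together with Lemmas~\ref{lem:linone}(3) and \ref{lem:orthogonal} to pin down the action on each basis vector. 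Even in the cases where a spanning set of coordinate $1$-spaces is available, connectivity of the form-incidence graph alone is insufficient in the symplectic and orthogonal settings (the relations $\lambda_i\lambda_j=1$ force scalars only in the presence of an odd cycle), and your ``quadratic witness'' would need to be made precise.

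For the coset action of $\Sp_d(q)$ on $\GO^\pm_d(q)$ with $q$ even, the paper does not use quadratic refinements of the symplectic form but rather the isomorphism $\Sp_{2m}(q)\cong\GO_{2m+1}(q)$, identifying $\Omega$ with $\NN^\pm(\GO_{2m+1}(q),2m)$. The lower bound then comes from intersecting any $2m-1$ such hyperplanes with a fixed one, reading off their one-dimensional radicals inside it, and invoking the orthogonal $1$-space lower bound of Lemma~\ref{lem:orth_sin_1_tight}; transvection centralisers do not enter.
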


 We shall prove this result by giving explicit bases of the stated
 size. These bounds are very similar to those proved by Burness,
 Guralnick and Saxl in \cite{BGS17} for algebraic groups, although we
 consider the full projective isometry group.
 Unfortunately we were not able to
 directly transfer many of their proofs over to the finite case, but we
 have taken some inspiration from their arguments. 

We notice in passing that the value of $b(G, \Omega)$ for $\Omega$ the
right coset space of $\GO^{\pm}_d(q)$ in $G = \Sp_d(q)$ is only one
less than the value of the largest irredundant base size for this
action, as proved in \cite{GillLodaSpiga}: in general these two quantities
can be very different.

\bigskip

\noindent \textbf{Acknowledgements} 
The authors would like to thank the
  Isaac Newton Institute for Mathematical Sciences for support and
  hospitality during the programme ``Groups, Representations and
  Applications: New perspectives'', when  work on this paper was undertaken. This work was supported by:
EPSRC grant numbers EP/R014604/1 and EP/M022641/1.
We are grateful to Professor Liebeck
for several helpful suggestions.

\section{Explicit bases for some subspace actions}\label{sec:onetwo}

Let $G$ be a finite almost simple classical group with natural module $V$.
In this section we present explicit bases for the action of $G$ on a $G$-orbit of totally
singular, non-degenerate, or non-singular  one- or two-dimensional subspaces of
$V$,  and for the action of $\Sp_d(q)$ on the right cosets of
$\GO^{\pm}_d(q)$, with $q$ even.

\begin{defn}\label{def:standardform}
Let  $\BBF = \BBF_{q^2}$ in the unitary case, and $\BBF = \BBF_q$
otherwise, and let $\sigma$ be the automorphism of
$\mathbb{F}$ mapping $x \mapsto x^q$. Write $\BBF^\ast$ for the
non-zero elements of $\BBF$. 

We fix our standard classical forms and bases on $V = \BBF^d$. Our
standard basis for $\GL_d(q)$ will be $(v_1,\dots, v_d).$ If $d = 2a$ then our standard unitary and symplectic forms $B$ have
basis
$(e_1,\dots,e_{a}, f_{1},\dots, f_a)$
 whilst if $d = 2a +1$ then our standard unitary form has basis
 $(e_1,\dots,e_{a},f_{1},\dots, f_{a}, x)$. In both cases, for
 all $i$ and $j$ we set
$B(e_i,e_j)=B(f_i,f_j)=0$, $B(e_i,f_j)=\delta_{i,j}$ (the Kronecker $\delta$),
$B(e_i,x)=B(f_i,x)=0$, and $B(x,x) = 1.$
	
Our standard quadratic form $Q$, with symmetric bilinear form $B$,
has basis
	\begin{align*}
	\begin{cases} (e_1,\dots,e_{a},f_{1},\dots, f_{a}) &
          \mbox{if $d = 2a$ and $Q$ is of $+$ type,}\\ 
	(e_1,\dots,e_{a},f_{1},\dots, f_a, x, y)  & \mbox{if $d =
          2a+2$ and $Q$ is of $-$ type,}\\ 
	(e_1,\dots,e_{a},f_{1},\dots, f_{a}, x) & \mbox{if $d = 2a+1$,}\\
	\end{cases}
	\end{align*} 
	where for all $i$ and $j$ we set 
$Q(e_i)=Q(f_i)=0, B(e_i,f_j)=\delta_{i,j}$, 
$B(e_i,x)=B(f_i,x)=B(e_i,y)=B(f_i,y)=0$,
	$Q(x) = B(x,y)=1$ and $Q(y)=\zeta$, where $X^2+X+\zeta\in
        \mathbb{F}[X]$ is irreducible. We will work, at
        times, with orthogonal groups of odd dimension in
        characteristic two, and this is our standard form in
        this case as well: see, for example, \cite[p139]{Taylor} for more information.

A pair $(u, v)$ of vectors in $V$ is a \emph{hyperbolic pair} if $B(u,
u) =  B(v, v) = 0$, 
$B(u, v) =1$, and (in the orthogonal case) $Q(u) = Q(v) = 0$.
\end{defn}

We now collect a pair of elementary lemmas.
The first two parts of the following are well known, and the
third is easy. By the \emph{support} of a vector $v$, denoted $\mathrm{Supp}(v)$,
we mean the set of basis vectors for
which the coefficient is nonzero.

\begin{lemma}\label{lem:linone}
  Let $W= \BBF_q^d$ with basis $w_1,\dots, w_d$,
  let $H=\GL_d(q)$, and let $\AA=\{\langle w_1\rangle
,\dots, \langle w_d\rangle\}$. 
\begin{itemize}
\item[(1)] $H_{(\AA)}$ is a group of diagonal matrices, and
  is trivial when $q = 2$.  
\item[(2)] For all $\mu:= (\mu_1, \ldots,
  \mu_d) \in (\BBF_q^\ast)^d$, let $\AA(\mu)=\AA\cup \{\langle
\mu _1 w_{1}+\cdots+\mu_dw_{d}\rangle \}$. Then $H_{(\AA(\mu))} =
Z(\GL_d(q))$.
\item[(3)] Let $T = \langle u, v \rangle \leq W$, and let 
$g \in H$ be such that $T^g = T$. 
If there exists an $s\in \Supp(v)$ such that for all $t\in \Supp(u),$ the vector $s\notin  \Supp(tg),$
then  $\langle u \rangle^g = \langle u  \rangle$.
\end{itemize} 
\end{lemma}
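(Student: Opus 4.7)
The plan is to dispatch the three parts in order. For (1), I would note that $g \in H$ fixes each $\langle w_i\rangle$ if and only if $w_i g$ is a scalar multiple of $w_i$, i.e.\ $g$ is diagonal in the basis $(w_1,\dots,w_d)$. When $q=2$ the only nonzero scalar is $1$, so $H_{(\AA)}=1$. For (2), by (1) any $g \in H_{(\AA(\mu))}$ is a diagonal matrix $\mathrm{diag}(\lambda_1,\dots,\lambda_d)$, and the extra condition that $g$ fixes $\langle \mu_1 w_1 + \cdots + \mu_d w_d\rangle$ yields $\lambda_i \mu_i = \lambda \mu_i$ for a common scalar $\lambda \in \BBF_q^\ast$; since no $\mu_i$ vanishes we deduce $\lambda_i = \lambda$ for all $i$, so $g = \lambda I \in Z(\GL_d(q))$. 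The reverse inclusion is immediate.

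For (3), the main tool is the equation $ug = \alpha u + \beta v$ for some $\alpha,\beta \in \BBF_q$, which holds because $g$ stabilises $T$; the aim is to show $\beta = 0$. I would expand $u = \sum_{t \in \Supp(u)} c_t\, t$ with $c_t \ne 0$ and apply $g$ linearly to obtain $ug = \sum_{t \in \Supp(u)} c_t (tg)$. The coefficient of the basis vector $s$ in $ug$ is then an $\BBF_q$-linear combination of the $s$-coefficients of the various $tg$, each of which is $0$ by hypothesis. Equating this to the $s$-coefficient of $\alpha u + \beta v$, namely $\alpha u_s + \beta v_s$, and using $v_s \ne 0$ (which holds because $s \in \Supp(v)$), forces $\beta v_s = 0$ whenever $u_s = 0$, i.e.\ whenever $s \notin \Supp(u)$; this yields $\beta = 0$ and hence $\langle u\rangle^g = \langle u\rangle$.

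The whole statement therefore reduces to coefficient-level bookkeeping, and I expect no serious obstacles. The one subtlety I would flag is the implicit reading of (3): the argument above genuinely needs $s \notin \Supp(u)$, and small examples in $\GL_3(2)$ (e.g.\ $u = w_1+w_2$, $v = w_1+w_3$, $s = w_1$, with $g$ a suitable $3$-cycle) show that the conclusion can fail if $s \in \Supp(u)\cap\Supp(v)$. So in the applications later in Section~\ref{sec:onetwo} the witness $s$ should be chosen from $\Supp(v)\setminus\Supp(u)$, which will be the natural configuration whenever $u$ and $v$ are built with largely disjoint coordinate supports.
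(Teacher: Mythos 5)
The paper offers no proof of this lemma: it simply remarks that parts (1) and (2) are well known and that part (3) is easy, so there is no argument in the paper to compare yours against line by line. Your treatment of (1) and (2) is the standard one and is correct. The substance of your review is in part (3), and there your proof is not only correct but also reveals a genuine imprecision in the statement: as you compute, writing $ug = \alpha u + \beta v$, the hypothesis kills the $s$-coefficient of $ug$, giving $\alpha u_s + \beta v_s = 0$; since $v_s \neq 0$, this forces $\beta = 0$ only when $u_s = 0$, i.e.\ when $s \notin \Supp(u)$. Your $\GL_3(2)$ example (with $u = w_1 + w_2$, $v = w_1 + w_3$, $s = w_1$, and $g$ the $3$-cycle $w_1 \mapsto w_2 \mapsto w_3 \mapsto w_1$) is a correct counterexample: $T^g = T$, the displacement condition holds for $s = w_1$, yet $\langle u \rangle^g = \langle w_2 + w_3 \rangle \neq \langle u \rangle$. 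So the lemma as written needs the additional hypothesis $s \notin \Supp(u)$ (or, more cleanly, $s \in \Supp(v) \setminus \Supp(u)$). You are also right that every invocation in Section~\ref{sec:onetwo} — e.g.\ $u = v_1 + v_3 + \cdots + v_{2a-1}$, $v = v_2 + v_4 + \cdots + v_{2a-2}$, $s = v_2$ in Lemma~\ref{lem:twospacelin}, or $u = e_1 + e_i$, $v = e_2 - f_1 + f_i$, $s = f_1$ in Lemma~\ref{lem:twosing} — chooses $s$ outside $\Supp(u)$, so the downstream arguments are unaffected. In short: your proposal is correct and complete, and it flags a small but real omission in the lemma's hypotheses that the paper glosses over.
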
	

In the presence of a non-degenerate form, we can make stronger statements. 

\begin{lemma} \label{lem:orthogonal}
Let  $B$ be a non-degenerate sesquilinear 
form on $V = \BBF^d$, with $d > 2$. Let $u,v\in V$ be such that $\langle u, v
\rangle$ is non-degenerate, and let $g$ be an isometry of $V$ such that
$ug=\alpha u$ for some $\alpha 
\in \BBF^\ast$. 
\begin{enumerate}
\item Assume that $vg= \beta v$, for some $\beta \in \BBF^\ast$. If
  $(u, v, w)$ are such that $0 \neq w\in \langle u, v
\rangle^{\perp}$,  and $g$ stabilises $\langle
\gamma_1u+\gamma_2v+\gamma_3w\rangle$ for some
$\gamma_i \in \BBF$ with $\gamma_1 \gamma_3 \neq 0$, then
$wg = \alpha w$. Furthermore, if $\gamma_2 \neq 0$ then
$\beta = \alpha$, and if, in addition, $B(u, v) \neq 0$ then $\alpha =
\alpha^{-q}$. 

\item Assume instead that $B$ is symmetric, and that
$(u, v)$ are a hyperbolic pair. 
        If $v g \in \langle u,v\rangle$, then
        $vg=\alpha^{-1}v.$ 
\end{enumerate}
\end{lemma}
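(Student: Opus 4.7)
For Part (1), the plan is to exploit the $g$-invariance of $U := \langle u, v\rangle$ and its orthogonal complement. Since $ug = \alpha u$ and $vg = \beta v$, the plane $U$ is $g$-invariant, and since $g$ is an isometry, so is $U^\perp$; as $U$ is non-degenerate, $V = U \oplus U^\perp$, so in particular $wg \in U^\perp$. The hypothesis that $g$ fixes $\langle \gamma_1 u + \gamma_2 v + \gamma_3 w\rangle$ then produces a scalar $\lambda \in \BBF^\ast$ with
\[
\gamma_1 \alpha u + \gamma_2 \beta v + \gamma_3 (wg) \;=\; \lambda\bigl(\gamma_1 u + \gamma_2 v + \gamma_3 w\bigr).
\]
Projecting onto $U^\perp$ and using $\gamma_3 \neq 0$ yields $wg = \lambda w$; projecting onto $U$ and comparing $u$-coefficients with $\gamma_1 \neq 0$ forces $\lambda = \alpha$, so $wg = \alpha w$. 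When $\gamma_2 \neq 0$, the $v$-coefficient comparison yields $\beta = \alpha$. For the final clause of Part (1), once $\beta = \alpha$ the isometry identity gives $B(u, v) = B(\alpha u, \alpha v) = \alpha \cdot \alpha^q B(u, v)$ by the $\sigma$-sesquilinearity of $B$ with $\sigma \colon x \mapsto x^q$; since $B(u, v) \neq 0$, this forces $\alpha^{q+1} = 1$, i.e.\ $\alpha = \alpha^{-q}$.

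For Part (2), since $vg \in \langle u, v\rangle$ I would write $vg = \delta u + \epsilon v$. The isometry identity
\[
1 = B(u, v) = B(ug, vg) = \alpha \delta B(u, u) + \alpha \epsilon B(u, v) = \alpha \epsilon
\]
forces $\epsilon = \alpha^{-1}$. To show $\delta = 0$, in odd characteristic I would expand $0 = B(v, v) = B(vg, vg) = 2 \delta \epsilon$ and conclude $\delta = 0$ since $\epsilon \neq 0$. In characteristic two this is vacuous, so I would instead apply the quadratic form, which $g$ preserves and for which a hyperbolic pair satisfies $Q(u) = Q(v) = 0$: from $0 = Q(v) = Q(vg) = \delta^2 Q(u) + \delta \epsilon B(u, v) + \epsilon^2 Q(v) = \delta \epsilon$ one again obtains $\delta = 0$, giving $vg = \alpha^{-1} v$.

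The only real subtlety is the characteristic-two case of Part (2), where the symmetric bilinear form alone cannot detect $\delta$ from $B(vg, vg) = 0$; one must pass to the quadratic form, which is legitimate because hyperbolic pairs in the orthogonal setting carry the additional vanishing condition $Q(u) = Q(v) = 0$, and $g$ is an isometry of $Q$. Otherwise both parts reduce to routine linear bookkeeping against the decomposition $V = U \oplus U^\perp$ together with the isometry identity $B(xg, yg) = B(x, y)$.
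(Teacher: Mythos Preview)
Your proof is correct and follows essentially the same approach as the paper's: in Part~(1) you exploit the $g$-invariant decomposition $V = U \oplus U^\perp$ and compare coefficients, exactly as the paper does (the paper writes $wg$ explicitly against a basis of $U^\perp$, you phrase it as a projection, but the content is identical). In Part~(2) the paper argues uniformly via the quadratic form, computing $0 = Q(vg) = \delta\epsilon$ in all characteristics, whereas you split into odd characteristic (using $B(vg,vg)=2\delta\epsilon$) and characteristic two (using $Q$); both routes are valid and reach the same conclusion.
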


\begin{proof}
(1). Since $\langle u, v \rangle$ is non-degenerate, $g$ preserves the
decomposition $V =
\langle u, v \rangle \oplus \langle u, v \rangle^\perp$. 
Let $\{w = w_3, w_4, \dots,w_d\}$ be a basis of $\langle u, v \rangle^\perp$. Then
there exist $\lambda_3, \ldots, \lambda_d$
such that  
$wg=\sum_{i = 3}^d \lambda_i w_i$. 
Further, there exists $ \mu \in \mathbb{F}_q$ such that  \[\mu
(\gamma_1u+\gamma_2v+\gamma_3w)=(\gamma_1u+\gamma_2v+\gamma_3w)g
=\gamma_1 \alpha u+\gamma_2 \beta v+\gamma_3
(\sum_{i = 3}^d \lambda_i
w_i).\]
Hence $\mu = \alpha = \lambda_3$ and $\lambda_i=0$ for $4 \leq i \leq
d$. Furthermore, if $\gamma_2 \neq 0$ then $\beta = \alpha$. The final
claim is clear.

\noindent
(2).  Let $vg=\beta u+\gamma
        v$.  From $1= B(u, v) = B(ug, vg)= \alpha\gamma$, we deduce that 
        $\gamma = \alpha^{-1} \neq
        0$.  Then $$0=Q(v)=Q(vg)=Q(\beta u+\gamma v)= \beta \gamma$$
        implies that $\beta=0$.  
\end{proof}

\subsection{Totally singular subspaces}\label{subsec:sing12}

In this subsection we consider the unitary, symplectic and orthogonal
groups acting on $\mathcal{S}(G,k)$ for  $k\in \{1,2\}$, where
$\SS(G, k)$ is as in Notation~\ref{not:SandN}.
We shall use without further comment the fact that the trace map from
$\BBF_{q^2}$ to $\BBF_q$, given by 
$\tr(\alpha) = \alpha + \alpha^q$, is surjective.

\begin{lemma} \label{lem:sing_one}
Let $G$ be $\PGU_d(q)$, $\PSp_d(q)$, $\PGO^\varepsilon_d(q)$, with $d
\ge 5$ if $G$ is orthogonal, and $d \ge 3$ otherwise, and let
$\Omega=\mathcal{S}(G, 1)$.
Then the set $\BB$ in
Table~\ref{tab:sing_one} is a base for the action of $G$ on
$\Omega$. In particular, $b(G) \le d$ and if $G$ is orthogonal then $b(G) \le d-1$. 
\end{lemma}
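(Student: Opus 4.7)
The plan is to exhibit, for each group $G$ in the statement, the explicit base $\BB$ recorded in Table~\ref{tab:sing_one} and verify that its pointwise stabiliser $G_{(\BB)}$ is trivial. In every case $\BB$ will consist of $d$ (or $d-1$, in the orthogonal case) totally singular 1-spaces built from the standard basis of Definition~\ref{def:standardform}: the coordinate 1-spaces $\langle e_i \rangle$ and $\langle f_i \rangle$ (singular since $B(e_i,e_i) = B(f_i,f_i) = 0$ in the sesquilinear cases and $Q(e_i) = Q(f_i) = 0$ in the orthogonal case), together with one or two ``mixing'' 1-spaces of the form $\langle e_i + e_j + f_k \rangle$ or $\langle e_i + \lambda f_j\rangle$ with suitably distinct indices; these are easily checked to be totally singular from Definition~\ref{def:standardform}.

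The verification proceeds in three stages. First, Lemma~\ref{lem:linone}(1) applied to the coordinate 1-spaces in $\BB$ forces any lift $\tilde g \in \GL_d(\BBF)$ of an element of $G_{(\BB)}$ to be diagonal in the standard basis, say $e_i g = \alpha_i e_i$ and $f_i g = \beta_i f_i$. Second, the isometry condition $B(e_i g, f_j g) = B(e_i, f_j) = \delta_{ij}$ (together with $Q(f_j g) = 0$ in the orthogonal case) yields the reciprocal relation $\beta_j = \alpha_j^{-1}$ in the symplectic and orthogonal cases, and $\beta_j = \alpha_j^{-q}$ in the unitary case. Third, Lemma~\ref{lem:orthogonal} applied to each mixing 1-space in $\BB$ forces pairs of eigenvalues to coincide: part~(1) handles mixing vectors with three components, and part~(2) handles a hyperbolic pair. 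Cascading across the mixing 1-spaces collapses all $\alpha_i$ to a common value $\alpha$, which by the combined isometry and mixing constraints must satisfy $\alpha^2 = 1$ (symplectic or orthogonal) or $\alpha^{q+1} = 1$ (unitary); thus $\tilde g$ is central in $\GL_d(\BBF)$ and $g = 1$ in $G$.

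The saving of one point in the orthogonal case rests on a quadratic constraint. Once $\tilde g$ has been pinned down on $\langle e_1, \dots, e_a, f_1, \dots, f_{a-1}\rangle$, the equations $Q(f_a g) = 0$, $B(e_i g, f_a g) = \delta_{i,a}$ and $B(f_j g, f_a g) = 0$ for $j < a$ together force $f_a g = \alpha_a^{-1} f_a$ automatically, so $\langle f_a \rangle$ need not appear in $\BB$. The main obstacle I anticipate is the case analysis for the orthogonal group of minus type, where the standard basis contains the extra anisotropic vectors $x, y$: the mixing 1-spaces must be chosen so that the cascade of eigenvalue equalities propagates into the $x,y$ sector while remaining totally singular with respect to $Q$ (especially delicate in characteristic two, where $Q$ rather than $B$ is the governing structure). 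A secondary subtlety, in the unitary case, is confirming that the resulting norm-one relation $\alpha^{q+1} = 1$ really does collapse to the centre of $\GU_d(q)$ for all admissible $q$.
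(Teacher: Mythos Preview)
Your proposal contains a counting error that blocks the claimed bounds. You describe $\BB$ as consisting of ``the coordinate 1-spaces $\langle e_i\rangle$ and $\langle f_i\rangle$ together with one or two mixing 1-spaces'', with $|\BB| = d$ (or $d-1$). But in even dimension $d = 2a$ there are already $2a = d$ singular coordinate 1-spaces, so adding any mixing space gives $|\BB| \ge d+1$; and your Stage~1 (reducing to diagonal matrices via Lemma~\ref{lem:linone}(1)) genuinely needs all $d$ of them, since in the symplectic and unitary cases omitting $\langle f_a\rangle$ still allows $f_a g = \gamma e_a + \alpha_a^{-1} f_a$ with $\gamma$ arbitrary. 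Your quadratic-constraint trick recovers at most one point, so for $\PSp_{2a}(q)$ and $\PGU_{2a}(q)$ you are stuck at $d+1$, and for $\PGO^+_{2a}(q)$ at $d$, one more than required in each case. The scheme ``diagonalize first with coordinate spaces, then equalize eigenvalues with a few mixing spaces'' is intrinsically too expensive.

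The bases in Table~\ref{tab:sing_one} are built the opposite way: only \emph{two} coordinate spaces $\langle e_1\rangle$ and $\langle f_1\rangle$, with the remaining points all linking spaces $V_i = \langle e_1 + e_i\rangle$ and $W_j = \langle e_1 + f_j\rangle$ (plus one or two extra spaces to reach $x,y$ when present). The proof never passes through arbitrary diagonal matrices. Instead, since $\langle e_1, f_1\rangle$ is non-degenerate and $e_i, f_j \in \langle e_1, f_1\rangle^\perp$, each application of Lemma~\ref{lem:orthogonal}(1) with $(u,v,w) = (e_1, f_1, e_i)$ or $(e_1, f_1, f_j)$ shows directly that $e_i g = \alpha e_i$ and $f_j g = \alpha f_j$ for the \emph{same} $\alpha$ as $e_1 g = \alpha e_1$: diagonality and equality of eigenvalues are obtained simultaneously, one base point serving both purposes for each new basis vector. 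In the orthogonal case $W_a$ is omitted and Lemma~\ref{lem:orthogonal}(2) recovers $f_a g = \alpha f_a$, exactly the saving you anticipated.
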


\begin{table}\caption{Bases for $\SS(G, 1)$} \label{tab:sing_one}
  Let $V_i=\langle e_1+e_i\rangle,$ $W_i=\langle e_1+f_i\rangle,$
    and $T=\langle -e_1+f_1+x\rangle$
  \begin{tabular}{| l | l | l |}
    \hline
    $G$ & $\mathcal{B}$ & Comments\\
    \hline \hline
                    $\PGU_{2a+1}(q)$ &
     $\{\langle e_1\rangle,\langle f_1\rangle, V_i, W_i, \langle
                  e_1+\mu f_1 + x\rangle \mid 2\le i\le a\}$ & $\tr(\mu) = -1$\\
    $\PGU_{2a}(q)$, $\PSp_{2a}(q)$ &
     $\{\langle e_1\rangle,\langle f_1\rangle, V_i, W_i\mid 2\le i\le a\}$ &\\
   $\PGO^+_{2a}(q)$& 
   $\{\langle e_1\rangle,\langle f_1\rangle, V_i, W_j, \mid 2\le i \le a, 2\le j \le a-1\}$ & \\ 
    $\PGO_{2a+1}(q)$ & \{$\langle e_1\rangle, \langle f_1\rangle, V_i, W_j,T \mid 2\le i \le a, 2\le j \le a-1\}$ & \\  
    $\PGO^-_{2a+2}(q)$ & $\left\{ \langle e_1\rangle,\langle f_1\rangle, V_i, W_j,T, \langle -\zeta e_1+f_1+y\rangle  \mid 2\le i \le a, 2\le j \le a-1\right\}$ & $\zeta$ from Defn~\ref{def:standardform}
 \\  
    \hline
  \end{tabular}
  \end{table}

\begin{proof}
Let $H = \GU_d(q)$, $\Sp_d(q)$, or $\GO^\varepsilon_d(q)$.
 First let $\BB$ be one of the sets listed in
 Table~\ref{tab:sing_one}. 
A straightforward calculation shows that each subspace in $\BB$ is 
 singular, so $\BB \subseteq \Omega$. Let $g \in H_{(\BB)}$.
We shall show that $g$ is scalar, from which the result will follow. 
To do so, we shall repeatedly apply Lemma~\ref{lem:orthogonal}(1), with $(u,v,w)$  set to be equal to various triples of vectors.

For $\PGU_3(q)$ it suffices to apply
Lemma~\ref{lem:orthogonal}(1) to $(e_1,f_1,x)$. So we can assume that $d\ge 4$. Apply Lemma \ref{lem:orthogonal}(1), first to 
$(e_1,f_1,e_i)$ and then to $(e_1,f_1,f_j)$
to see that there exists $\alpha\in \BBF$ such that

\begin{equation}\label{eq:bo}
  e_ig=\alpha e_i, \,\; \;f_jg=\alpha f_j,\;\quad \mbox{for $1\le i\le
    a$ and}
  \left\{\begin{array}{ll}
            2\le j\le a & \mbox{if $H$ is $\GU_{d}(q)$ or $\Sp_{d}(q),$}\\
            2\le j\le a-1 & \mbox{if $H$ is orthogonal.}
            \end{array} \right. 
\end{equation}
Now, $B(e_{1}g,f_{1}g)=1$ yields 
\begin{align}\label{eq:bo2}
f_1g=\alpha^{-q} f_1.
\end{align}
For $\PGU_{2a+1}(q)$ the result follows by applying Lemma~\ref{lem:orthogonal}(1) to $(e_1,f_1,x)$. 
For $\PGU_{2a}(q)$, $\PSp_{2a}(q)$ and $\PGO^+_{2a}(q)$,   
we deduce from $B(e_2g, f_2g) = 1$ that
$\alpha=\alpha^{-q},$ hence if $G$ is not orthogonal then $g$ is scalar.

For $\PGO_{2a+1}(q)$, applying Lemma~\ref{lem:orthogonal}(1) to
$(e_1,f_1,x)$ shows that $xg=\alpha x = \pm x$.
Similarly, for $\PGO^-_{2a+2}(q)$, applying
Lemma~\ref{lem:orthogonal}(1) to both $(e_1,f_1,x)$ and $(e_1,f_1,y)$
yields $xg=\alpha x = \pm x$ and $yg=\alpha y.$
Combining these with (\ref{eq:bo}) and (\ref{eq:bo2}), we deduce that
if $H$ is orthogonal then $g$ stabilizes $\langle
e_{a},f_{a}\rangle^\perp$,  and so  
stabilizes $\langle e_a,f_a\rangle.$ Then
Lemma~\ref{lem:orthogonal}(2) shows that $f_ag=\alpha f_a$, so
$g$ is scalar.  
\end{proof}

\begin{lemma} \label{lem:twospacelin}
Let  $G = \PGL_d(q)$ and let $\Omega= \mathcal{S}(G,2).$
Then the set $\BB$ in Table~\ref{tab:k=2sing} is a base for the action of $G$ on $\Omega$. In particular
$b(G)\le \lceil\frac{d}{2}\rceil+2$ when $d \ge 5$, and  $b(G)\le 5$ when $d = 4$.
\end{lemma}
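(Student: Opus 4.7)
The plan is to exhibit, for each $d \ge 4$, an explicit set $\BB$ of two-dimensional subspaces of $V = \BBF_q^d$ and to show that the pointwise stabiliser of $\BB$ in $\GL_d(q)$ is contained in $Z = Z(\GL_d(q))$, so that $\BB$ descends to a base for $\PGL_d(q)$ on $\mathcal{S}(G,2)$ of the stated size. Since the general-linear case lacks an invariant form, the workhorse will be Lemma~\ref{lem:linone}(3): a stabilised 2-space $\langle u, v \rangle$ with a sufficiently ``lopsided'' pair of supports forces the individual 1-space $\langle u \rangle$ to be stabilised, which we can then chain with part~(1) of the same lemma.

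A natural starting collection is the family of coordinate pairs $T_i = \langle v_{2i-1}, v_{2i}\rangle$ for $1 \le i \le \lfloor d/2 \rfloor$, enlarged when $d$ is odd by $\langle v_{d-1}, v_d\rangle$, so that altogether they span $V$ and account for $\lceil d/2 \rceil$ of the target $\lceil d/2 \rceil + 2$ subspaces. Their pointwise stabiliser in $\GL_d(q)$ consists of block-diagonal matrices with $2 \times 2$ blocks on each $T_i$ (with a forced matching in the odd case, where two pairs overlap in $\langle v_{d-1}\rangle$). I would then adjoin two ``diagonal'' 2-spaces of shape $\langle v_1 + v_3 + \cdots, v_2 + v_4 + \cdots\rangle$ and a scalar-weighted variant. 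For $g$ in the current stabiliser, writing $g|_{T_i}$ as a $2 \times 2$ block $A_i$ and matching coefficients basis-vector by basis-vector in the equation $g \cdot D_j = D_j$ will yield enough linear relations to conclude that every $A_i$ equals a common scalar $\lambda$ times $I_2$, so that $g = \lambda I_d \in Z$.

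The main obstacle I anticipate is the small case $d = 4$, where only two coordinate pairs exist: a single diagonal 2-space cannot carry enough coordinate information to distinguish the two independent $\GL_2(q)$ blocks from one another, so an additional 2-space must be adjoined, explaining the jump from $\lceil d/2\rceil + 2 = 4$ to $5$. A secondary technical point is the odd-dimensional case, where the overlap $\langle v_{d-1}\rangle$ of the two tail coordinate pairs already pins $\langle v_{d-1}\rangle$ to be fixed setwise; this needs to be incorporated carefully when invoking Lemma~\ref{lem:linone}(3) on the diagonal 2-spaces so that the residual ``shared block'' relations are tracked alongside the main block-scaling computation. Beyond these delicate corners, the argument reduces to routine but careful verification that each diagonal generator, when hit by a block-diagonal $g$, lies in $D_j$ only when each block acts as a common scalar, which then yields $g \in Z$ and completes the proof.
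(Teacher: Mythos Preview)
Your overall strategy coincides with the paper's: take the $\lceil d/2\rceil$ coordinate-pair $2$-spaces $X_i=\langle v_{2i-1},v_{2i}\rangle$ (with a wraparound/overlap in the odd-dimensional case) to cut the stabiliser down to block-diagonal shape, then adjoin two further $2$-spaces to collapse everything to scalars. The paper carries this out with a slightly asymmetric $Y_1=\langle v_1+v_3+\cdots+v_{2a-1},\,v_2+v_4+\cdots+v_{2a-2}\rangle$ (the second sum stops one term early) and $Y_2=\langle v_1,\,v_3+v_{2a-2}+v_d\rangle$, and argues via intersections such as $Y_1\cap(X_1\oplus\cdots\oplus X_{a-1})=\langle v_2+\cdots+v_{2a-2}\rangle$ and $X_1\cap Y_2=\langle v_1\rangle$ together with Lemma~\ref{lem:linone}(3), rather than pure coefficient-matching on a block-diagonal $g$. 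That is a difference of execution rather than of idea.

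There is, however, a real gap in your specification of the second extra $2$-space. If both of your ``diagonal'' spaces have the shape $\bigl\langle \sum_i\alpha_i v_{2i-1},\,\sum_i\beta_i v_{2i}\bigr\rangle$ --- one generator supported only on odd-indexed basis vectors, the other only on even-indexed ones --- then every such space is stabilised by the diagonal map sending $v_{2i-1}\mapsto\lambda v_{2i-1}$ and $v_{2i}\mapsto\rho v_{2i}$, for \emph{any} pair $\lambda,\rho$. Concretely: your $D_1$ does force all the $2\times 2$ blocks to coincide in a common matrix $A$, and a ``scalar-weighted variant'' $D_2$ of the same shape then forces $A$ to be diagonal, but nothing in this shape ever couples the two diagonal entries of $A$. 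So with $\lceil d/2\rceil+2$ subspaces of this form the stabiliser is still a rank-two torus, and your base-size count is off by one for every $d\ge 5$, not just $d=4$. What rescues the paper's argument is that the second generator of $Y_2$ deliberately mixes parities (e.g.\ $v_3$ is odd-indexed while $v_{2a-2}$ is even-indexed), and it is precisely this mixing that supplies the missing relation $\lambda=\rho$. Replace your ``scalar-weighted variant'' by a $2$-space with a parity-mixing generator of this kind and the rest of your plan goes through.
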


\begin{proof}
                Let $g  \in \GL_d(q)_{(\BB)}$: we shall show that $g$
                is scalar. 
The arguments for $d = 4$ are similar to, but easier than, those that
follow, so let $d\ge 5$, and
 let  
$X  =X_1\oplus\dots\oplus X_{a-1}$. 
Then $g$ stabilises
$Y_1\cap X  =\langle  v_2+v_4+\dots+v_{2a-2}
\rangle$. 
 Hence there exists $\alpha \in \BBF_q$ such that
\begin{equation*}\label{eq:evenbutthelast}
v_{2j}g=\alpha v_{2j}, \, \, \text{for}\; 1 \le j\le a-1.
\end{equation*}
Furthermore, $g$ stabilises
	$X_1\cap Y_2=\langle v_1\rangle$, and hence $v_1g=\beta v_1$
        for some $\beta \in \BBF.$  Now, this and the fact that $g$
        stabilizes $X_2 \oplus \dots \oplus X_{a}
        =\langle v_3, v_4,\dots, v_{2a-1},v_{2a}\rangle$ 
        means that
        we may apply Lemma~\ref{lem:linone}(3), with $u =
        v_1+v_3+\dots+v_{2a-1}$, $v=v_2+v_4+\dots+v_{2a-2},$ and $s=v_2$ to deduce that
\begin{align*}
	v_{2i-1}g=\beta v_{2i-1},  \;\text{for}\, 1\le i\le a.
		\end{align*}
Now, 
$v_d g \in \langle
v_{d-1}, v_d \rangle$ if $d=2a$ is even (and $v_d g= \beta v_d$ otherwise), so
Lemma~\ref{lem:linone}(3), with $T =
Y_2$, $u = v_3 + v_{2a-2} + v_d$  and $s = v_1$ yields 
$\langle
u\rangle^g=\langle
u\rangle$, so $\alpha=\beta$ and
$g$ is scalar.
\end{proof}

\begin{table}\caption{Bases for 
    $\mathcal{S}(G, 2)$}\label{tab:k=2sing}
  Let $a = \lceil d/2 \rceil$, 
      $X_i = \langle v_{2i-1}, v_{2i}\rangle$ with $v_{d+1} = v_1$, $Y_1= \langle v_1 + v_3 + \cdots + v_{2a-1}, v_2
              + v_4 + \cdots + v_{2a-2} \rangle$,             
$V_1 = \langle e_1, e_2 \rangle, V_2 =
   \langle f_1, f_2\rangle $, $W_i=\langle e_1+e_i,
    e_2-f_1+f_i\rangle$, and $\AA = \{V_1, V_2,  W_i \ : \ 3 \leq i \leq
    a  - 1\} $.    
\begin{tabular}{| l| l| l|}
\hline
$G$ & $\mathcal{B}$ & Notes \\
  \hline \hline
  $\PGL_4(q)$ & $\{X_1, X_2, Y_1, \langle v_2, v_4 \rangle, \langle v_1 +
                v_2, v_3 \rangle\}$ & \\
  \hline
$\PGL_d(q)$ & $\{X_i, Y_1, Y_2 = \langle v_1, v_3
              + v_{2a-2} + v_d \rangle \ : \ 1 \le i \le a \}$ & $d \ge 5$ \\
  \hline
$\PGU_4(q)$ & $\{V_1, V_2, \langle e_1+\mu f_1, e_ 2+\mu f_2\rangle, \langle e_1,f_2\rangle, \langle e_1-e_2,f_1+f_2\rangle\}$ &$\tr(\mu)=0$ \\
\hline
$\PSp_4(q)$ & $\{V_1, V_2, \langle e_1+f_1+f_2, e_2+f_1\rangle, \langle e_1+f_2,
e_2+f_1+f_2\rangle\}$ &$q$ even \\
&$\{V_1, V_2, \langle e_1+f_1+f_2, e_2+f_1\rangle, \langle e_1+f_2, e_ 2+ f_1\rangle \}$& $q$ odd\\
\hline
$\PGU_5(q)$ & $\{V_1, V_2, \langle -e_2+\lambda f_2+x,f_1\rangle,
               \langle -e_1+\lambda f_1+x, f_2\rangle\}$ &   $\tr(\lambda) =1$ \\
\hline
$\PSp_6(q), \; \PGU_6(q)$ & $\{V_1, V_2, \langle e_1 + e_3, e_2 - f_1 + f_3 \rangle, \langle e_1-e_2,
                            f_1+f_2\rangle\}$ &\\
  \hline $\PGU_{2a}(q)$, $\PSp_{2a}(q)$, $\PGO^+_{2a}(q)$
    & $\AA \cup \{V_3= \langle e_1 + e_a, e_2
                        - e_a -  f_1 + f_2 + f_a\rangle \}$ & $a \ge 4$\\
  \hline
  $\PGU_{2a-1}(q)$ & $\AA \cup \{V_4 = \langle
  -e_1 + \lambda f_1 + x, e_3 +f_2 \rangle\}$ & $\tr(\lambda) =
                                                  1$, $a \ge 4$\\
  \hline
  $\PGO_{2a-1}(q)$ & $\AA \cup \{V_5 = \langle -e_1 + f_1 + x, e_3 +  f_2 
                     \rangle\}$ &  $a \ge 4$\\
  \hline
  $\PGO^-_{2a}(q)$& $\AA \cup \{V_6 = \langle -e_1  + e_2 + f_1 + x,
                    -\zeta e_1 + f_1 + \zeta f_2 + y \rangle\}$ & $a \ge
  4$ \\
  \hline
\end{tabular}
\end{table}

\begin{lemma} \label{lem:twosing}
	Let $G \in \{\PGU_d(q), \PSp_d(q),
        \PGO_d^{\varepsilon}(q)\}$ with $d\ge 4,$ and $d\ge 7$ if $G$
        is orthogonal, let  $\Omega = \SS(G, 2)$ and let $b = b(G)$.  Then the set $\BB$ in
Table~\ref{tab:k=2sing} is a base for the action of G on $\Omega$. In particular, if $d \ge 7$ then $b \le \lceil\frac{d}{2}\rceil$, if $G = \PGU_4(q)$
then $b \le 5$, 
whilst otherwise, if $d\le 6$ then $b\le 4.$
\end{lemma}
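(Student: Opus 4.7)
The plan is to mirror the template set up by Lemmas~\ref{lem:sing_one} and~\ref{lem:twospacelin}. For each row of Table~\ref{tab:k=2sing} I will first verify by direct calculation with the forms of Definition~\ref{def:standardform} that every subspace listed is totally singular, so that $\BB\subseteq\Omega$; this is routine. Then, letting $H\in\{\GU_d(q),\Sp_d(q),\GO^{\varepsilon}_d(q)\}$ be the full isometry group and taking $g\in H_{(\BB)}$, the goal is to show that $g$ is a central scalar in $H$, since $G$ is the projective quotient of $H$ by its centre.

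The small cases ($d\le 6$) will be treated individually but briefly. Each such $\BB$ has only four or five subspaces, chosen so that their intersections with $V_1=\langle e_1,e_2\rangle$, $V_2=\langle f_1,f_2\rangle$ and their orthogonal complements cut out the lines $\langle e_1\rangle,\langle e_2\rangle,\langle f_1\rangle,\langle f_2\rangle$, together with $\langle x\rangle$ in the $\PGU_5(q)$ case, or $\langle e_3\rangle,\langle f_3\rangle$ in the $\PSp_6(q)$ and $\PGU_6(q)$ cases. Applying Lemma~\ref{lem:linone}(3) and Lemma~\ref{lem:orthogonal} to appropriate triples will then force $g$ to act as a common scalar on the whole basis.

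For the generic case ($a\ge 4$, $d\ge 7$) the argument runs uniformly across the four rows of the table. The key observations, valid in every classical type, are
\[
V_1^{\perp}\cap W_i=\langle e_1+e_i\rangle \quad\text{and}\quad (V_1+W_i)\cap(V_1+V_2)^{\perp}=\langle e_i\rangle,\qquad 3\le i\le a-1,
\]
both of which are stabilised by $g$. Writing $e_ig=\mu_ie_i$ and $(e_1+e_i)g=\lambda_i(e_1+e_i)$, and using that $e_1g\in V_1$, one deduces $\lambda_i=\mu_i$, that all $\lambda_i$ are a common scalar $\lambda$, and that $e_1g=\lambda e_1$. Next, using $f_jg\in V_2$, the isometry identities for $B(e_1g,f_jg)$, $B(e_2g,f_jg)$ and $B(f_ig,e_2g)$ force $f_1g=\lambda^{-q}f_1$, $f_2g=t^{-q}f_2$, $e_2g=te_2$ for some not-yet-determined $t$, and $f_ig=\alpha_ie_i+\lambda^{-q}f_i$ for $3\le i\le a-1$ (the exponent $-q$ being read as $-1$ whenever $H$ is symplectic or orthogonal).

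The final subspace of $\BB$---one of $V_3$, $V_4$, $V_5$, $V_6$---is engineered to kill the remaining freedom. Repeating the two intersections above with this subspace in place of $W_i$ provides a further $g$-eigenline of eigenvalue $\lambda$ (this is $\langle e_a\rangle$ for $V_3$, and the analogous lines involving $x$ or $y$ in the remaining cases); expanding the image of the second generator of this subspace in the standard basis then forces $t=\lambda$, every $\alpha_i=0$, and, crucially, the central relation $\lambda^{q+1}=1$ (respectively $\lambda^2=1$) in the unitary (respectively symplectic or orthogonal) case. These combined conditions give $g=\lambda I\in Z(H)$, so $b(G)\le|\BB|=\lceil d/2\rceil$. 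The main obstacle will be the bookkeeping: handling the three classical types in parallel, tracking the sesquilinear-versus-bilinear split, and managing the quite different role of the final generator in the $V_4$, $V_5$ and $V_6$ cases, where the additional singular vectors $x$ (and $y$, for $\PGO^-_{2a}$) replace the pair $(e_a,f_a)$ and require a final appeal to Lemma~\ref{lem:orthogonal}(1) to fix their scalar action.
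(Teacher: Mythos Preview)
Your template for the middle of the argument is sound and runs parallel to the paper's, though the mechanics differ: where you isolate the line $\langle e_i\rangle$ via $V_1^\perp\cap W_i=\langle e_1+e_i\rangle$ and $(V_1+W_i)\cap(V_1+V_2)^\perp=\langle e_i\rangle$ and then apply isometry identities, the paper instead takes the single intersection $\langle V_1,V_2,W_i\rangle\cap(V_1+V_2)^\perp=\langle e_i,f_i\rangle$ and invokes Lemma~\ref{lem:linone}(3) with a suitable support vector $s$. Your list of isometry identities is a little thin (for instance you need $B(e_1g,f_ig)=0$ before $B(f_ig,e_2g)=0$ tells you anything), and once all the relevant ones are in play you will in fact already have $t=\lambda^{-q}$ and each $\alpha_i=0$, matching the paper's equation~\eqref{eq:gen2}; but this is bookkeeping, not a real obstacle.

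The genuine gap is in the final step. You assert that ``repeating the two intersections above'' with $V_4$, $V_5$ or $V_6$ in place of $W_i$ produces a $g$-eigenline involving $x$ or $y$. It does not: both intersections are \emph{zero} in these cases. For example, each generator of $V_4=\langle -e_1+\ell f_1+x,\;e_3+f_2\rangle$ carries an $f_1$- or $f_2$-component, so $V_1^\perp\cap V_4=0$, and similarly $(V_1+V_4)\cap(V_1+V_2)^\perp=0$; the same happens for $V_5$ and $V_6$. (Your template \emph{does} work for $V_3$, where $V_1^\perp\cap V_3=\langle e_1+e_a\rangle$.) The paper circumvents this by first observing that $g$ stabilises $\langle\AA\rangle^\perp$, which is $\langle x\rangle$, $\langle x,y\rangle$, or $\langle e_a,f_a\rangle$ according to type, and then applying Lemma~\ref{lem:linone}(3) to the final subspace with $s$ chosen from $\{e_1,e_2,f_2\}$ to isolate each generator. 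You will need some device of this kind; the two intersections you rely on are not enough here.
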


\begin{proof}
 The arguments for $d \le 6$ are similar to, but more straightforward
than, those that follow, so we shall assume
that $d \ge 7$, so that $a = \lceil d/2 \rceil\geq 4$.

Let $H$ be $\GU_d(q)$, $\Sp_d(q)$ or
$\GO^\varepsilon_d(q)$, and let $g\in H_{(\BB)}.$ It is straightforward to verify that 
$\BB \subseteq \Omega$.
Since $V_i^g = V_i$ for $i \in \{1, 2\}$, there exist 
$\alpha_i,\beta_i, \gamma_i, \delta_i \in \BBF$ such that 
\begin{equation}\label{eq:second0}
\begin{array}{cc}
e_1g=\alpha_1e_1+\alpha_2 e_2,& e_2g=\beta_1 e_1+\beta_2 e_{2}\\
f_1g=\gamma_1 f_1+\gamma_2 f_2,& f_2g=\delta_1 f_1+\delta _2f_{2}.
\end{array}
\end{equation}
Let $\AA$ be as in
Table~\ref{tab:k=2sing}, and let $X = \langle \AA \rangle$.
We shall first show that 
\begin{equation}\label{eq:gen2}
  e_i g = \alpha_1 e_i\;\,\mbox{and}\;\, f_ig = \beta_2 f_i \,\;  \mbox{ for } i = \{1, 3, 4,
  \ldots, a-1\}, \quad e_2 g
  = \beta_2 e_2, \quad f_2 g = \alpha_1 f_2.
\end{equation} 

Let $U = V_1 \oplus V_2$, 
and let $W= U^\perp$, so that $W^g =W$. 
For $3 \le i \le a-1$, the element $g$ stabilises $U_i:= \langle V_1, V_2,
W_i \rangle$, and so stabilises $U_i \cap W = \langle e_i, f_i
\rangle$.
Then Lemma~\ref{lem:linone}(3), with 
$u=e_1+e_i$, $v=e_2-f_1+f_i,$ and $s=f_1$ 
 shows 
that there exists $\eta\in \BBF$ such that
$(e_1+e_i)g =\eta(e_1+e_i) =\alpha_1e_1+\alpha_2 e_2+  e_ig$, where the last equality holds by \eqref{eq:second0}. 
Hence
 \eqref{eq:gen2} holds for $e_i$ for $i \neq 2$.
Similarly, for $3 \le i \le a-1,$ there exist $\eta,\rho \in \BBF$ such that
\[
\begin{array}{rl}
(e_2-f_1+f_i)g&=\eta(e_1+e_i)+ \rho (e_2-f_1+f_i)\\
&=\beta_1e_1+\beta_2 e_2-\gamma_1 f_1-\gamma_2 f_2+ f_ig.
\end{array}
\]
Equating coefficients, we deduce  from $f_ig \in \langle e_i, f_i
\rangle$ that $\gamma_2=0$ and  $\beta_2 = \gamma_1$, so that $f_1 g=
\beta_2 f_1$,  and also deduce that $f_i g = \beta_1
e_i + \beta_2 f_i$ for $3 \leq i \leq a-1$. 
For $i \in \{1, 2\}$, let $A_i = \langle e_i, f_i \rangle$. 
Then $A_1^g= A_1$,
so $g$ stabilises $A_1^\perp \cap U = A_2$, and consequently stabilizes
$V_1\cap A_2=\langle e_2\rangle$ and
$V_2\cap A_2= \langle f_2\rangle$, and so
$\beta_1= \delta_1 = 0$. 
Finally, $B(e_1g,f_1g)=B(e_2g,f_2g)=1$  yields 
\[
\alpha_1 =\beta_2^{-q} ,\; \mbox{and}\;\,\beta_2 =\delta_2^{-q},	
\]
hence $\alpha_1 =\delta_2$, and so \eqref{eq:gen2} follows.

\medskip

We now complete the proof that $g=\alpha_1 I_d,$ so
$\BB$ is a base for $G$. If $d = 2a-1$ then (\ref{eq:gen2}) yields
 $(X^\perp)^g = \langle x \rangle^g = \langle x
\rangle$. Let $u=-e_1+f_1+x$ if $G$ is orthogonal and $u=-e_1+\lambda f_1+x$ otherwise. Then 
Lemma~\ref{lem:linone}(3), with $v=e_3 + f_2$ and $s=f_2$, shows
that $\langle u \rangle^g = \langle u \rangle$,
and so $g = \alpha_1 I_{d}$, as required.

If $H=\GO^-_{2a}(q)$
then
$(X^\perp)^g = \langle x, y \rangle^g = \langle x, y \rangle$. We deduce  from \eqref{eq:gen2} and
Lemma~\ref{lem:linone}(3), with $T = V_6$, $u=-e_1+e_2+f_1+x$ and $s=f_2,$
that $\langle u \rangle^g = \langle u \rangle$, 
and so  $\alpha_1 =\beta_2$ and $xg =\alpha_1 x$.
Now considering $u=-\zeta e_1+f_1+\zeta f_2+y$ and $s = e_2$
shows that
$g$ is scalar. 

\bigskip

\noindent
Finally, consider $\PGU_{2a}(q),
\PSp_{2a}(q)$ and $\PGO^+_{2a}(q)$.
From \eqref{eq:gen2} we see that $\langle e_a, f_a \rangle^g = \langle
e_a, f_a \rangle$. Then, by Lemma~\ref{lem:linone}(3), with
$T = V_3$, $u=e_1+e_a$, and $s = e_2$, we deduce that 
$e_ag = \alpha_1 e_a.$ 
Moreover,
letting $u=e_2-e_a - f_1+f_2+f_a,$ $v=e_1+e_a$, and $s=e_1,$
we see that  $\langle u \rangle^g = \langle  u \rangle$, and so
$g=\alpha_1I_{2a},$ as required.
\end{proof}

\subsection{Non-degenerate subspaces} \label{subsec:nondeg12}

In this subsection we consider $\NN^{\epsilon}(G,k)$, where $k \le
2$ and $\NN^\epsilon(G, k)$ is as in
Notation~\ref{not:SandN}.


\begin{lemma}\label{lem:nondeguni1}
	Let $d\ge 3$, let $G = \PGU_d(q)$, and  let $\Omega = \NN(G,
        1)$. 
       Then the set $\BB$ in
Table~\ref{tab:nondeg1} is a base for the action of $G$ on $\Omega$, so $b(G)\le d.$
\end{lemma}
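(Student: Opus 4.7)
The overall plan is to mimic the proof of Lemma~\ref{lem:sing_one} for totally singular 1-spaces, adapting it to the non-degenerate setting. The first task is to exhibit in Table~\ref{tab:nondeg1} an explicit set $\BB$ of $d$ non-degenerate 1-spaces of $V = \BBF_{q^2}^d$, each spanned by a vector of small support; the natural candidates replace the singular spanning vectors $e_i, f_i, x$ of Lemma~\ref{lem:sing_one} by perturbations such as $e_i + \mu f_i$ (which is non-degenerate precisely when $\tr(\mu) \ne 0$), and include spaces of the form $\langle e_1 + \mu f_1 + e_i \rangle$, $\langle e_1 + \mu f_1 + f_j \rangle$ in order to tie the action on the various hyperbolic pairs together, plus the obvious $\langle x \rangle$ when $d$ is odd. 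Non-degeneracy of each space is verified by a direct computation of $B(v,v)$, using surjectivity of the trace $\BBF_{q^2} \to \BBF_q$ to pick the scalars $\mu$.

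Let $g \in \GU_d(q)_{(\BB)}$; the goal is to show $g$ is a scalar matrix, from which $g \in \PGU_d(q)_{(\BB)}$ is trivial and the bound $b(G) \le d$ follows. Following the template of Lemma~\ref{lem:sing_one}, the first main step is to extract scalar action on basis vectors from stabilisation of the non-degenerate 1-spaces in $\BB$. The idea is: for pairs of indices $1 \le i \le a$ (and $j$ in a suitable range), apply Lemma~\ref{lem:orthogonal}(1) with $(u,v,w)$ taken to be triples such as $(e_1 + \mu f_1, e_i + \mu' f_i, \text{third vector in } \BB)$ in a non-degenerate 3-space to conclude that both $u$ and $v$ are eigenvectors for $g$ with a common eigenvalue $\alpha$. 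Iterating this over all hyperbolic pairs, and then pinning down the $x$-direction in odd dimension via $(e_1+\mu f_1, e_2+\mu'f_2, x)$, yields that each $e_i, f_i$ (and $x$) is sent to a scalar multiple of itself.

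The second main step is to force all these scalars to coincide and satisfy $\alpha\cdot\alpha^q=1$. For the former, one additionally uses base elements whose defining vectors mix $e_i$'s and $f_j$'s across different hyperbolic pairs (the unitary analogues of the $V_i, W_i$ in Table~\ref{tab:sing_one}); comparing coefficients in $(e_1+e_i)g = \eta(e_1+e_i)$ shows the eigenvalue on $e_i$ equals that on $e_1$, and similarly for $f_j$. For the latter, the equation $B(e_i g, f_i g) = B(e_i, f_i) = 1$, together with $e_i g = \alpha e_i$ and $f_i g = \beta f_i$, forces $\alpha\beta^q = 1$; applying this to several pairs and using the pin-down of a single common scalar $\alpha$ then yields $\alpha = \alpha^{-q}$, which combined with $\alpha = \beta$ means $g = \alpha I_d$.

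The main obstacle I anticipate is precisely the transition to the non-degenerate setting: in Lemma~\ref{lem:sing_one} the isotropic spaces $\langle e_1\rangle,\langle f_1\rangle$ sit inside the base and provide an immediate hyperbolic pair to apply Lemma~\ref{lem:orthogonal}(1) to, whereas here every base vector is a nontrivial combination $e_i + \mu f_i + \cdots$, and one must disentangle the separate action on $e_i$ and $f_i$ from joint information. The choice of $\BB$ must therefore be made carefully so that the support conditions of Lemma~\ref{lem:linone}(3) and the non-degeneracy hypothesis of Lemma~\ref{lem:orthogonal}(1) are simultaneously satisfied at each application, and so that $|\BB|$ does not exceed $d$; the parity of $d$ and any small-$d$ exceptional cases (especially $d=3,4$) will likely require separate explicit verifications.
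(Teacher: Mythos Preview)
Your plan overlooks the key simplification that makes this lemma almost immediate: for a unitary space over $\BBF_{q^2}$ one may choose an \emph{orthonormal} basis $(v_1,\dots,v_d)$ with $B(v_i,v_j)=\delta_{ij}$, and this is exactly what Table~\ref{tab:nondeg1} uses. With that choice each $\langle v_i\rangle$ is already non-degenerate, so the base for $d$ odd or $q>2$ is simply $\{\langle v_1\rangle,\dots,\langle v_{d-1}\rangle,\langle v_1+\cdots+v_{d-1}+\mu v_d\rangle\}$ for a suitable scalar $\mu$. The stabiliser of the first $d-1$ spaces also fixes $\langle v_d\rangle=\langle v_1,\dots,v_{d-1}\rangle^\perp$, hence is diagonal by Lemma~\ref{lem:linone}(1), and then the last vector forces it to be scalar by Lemma~\ref{lem:linone}(2). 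The case $q=2$, $d$ even needs a minor variant because $B(v(\mu),v(\mu))=d\equiv 0$; there one takes $\{\langle v_1\rangle,\langle v_2\rangle,\langle v_1+v_2+v_i\rangle\mid 3\le i\le d\}$ and applies Lemma~\ref{lem:orthogonal}(1) directly.

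Your hyperbolic-basis approach is not wrong in spirit, but it fights the geometry unnecessarily. In the singular case one is forced to use $e_i,f_i$ because isotropic vectors are what is on offer; here, non-degenerate $1$-spaces abound, and picking pairwise orthogonal ones eliminates all the ``disentangling'' you worry about. Concretely, your scheme would need to recover the separate actions on $e_i$ and $f_i$ from eigenvectors of the form $e_i+\mu f_i$, which costs an extra base point per hyperbolic pair and makes it genuinely delicate to keep $|\BB|\le d$; you yourself flag this as the main obstacle. The orthonormal basis sidesteps it entirely.
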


\begin{proof}
First assume that either $d$ is odd or $q > 2$.
Let $\alpha$ be a primitive element of $\BBF^\ast$. Then for at least
one value of $\mu$ in $\{\alpha, \alpha^{-1}, \alpha^2\}$ the vector
$v(\mu) =  v_1 +\dots+  v_{d-1} +\mu v_d$ is non-degenerate, so $\BB
\subseteq \Omega$. 
 Let $g\in \GU_{d}(q)_{(\BB)}$ and
 $U=\langle v_1 , \dots, v_{d-1}\rangle$.
 Since $U$ is non-degenerate, $(U^{\perp})^g=\langle v_d\rangle^g=\langle v_d\rangle,$
and hence $g$ is
diagonal by Lemma~\ref{lem:linone}(1). Then $g$ also stabilises
$\langle v(\mu) \rangle$, and so is scalar, by Lemma~\ref{lem:linone}(2).
	
	For  $q=2$ and $d$ even, $g$ stabilises
	$\langle v_1 , v_2\rangle^\perp = \langle v_3,\dots,
        v_d\rangle.$ Therefore
        Lemma~\ref{lem:orthogonal}(1), applied to  $(v_1,v_2,v_i)$, for $3\leq
        i\le d$, 
shows that $\GU_d(q)_{(\BB)}$ is scalar.
\end{proof}

When $q$ is odd,  $\PGO^\varepsilon_d(q)$ has
two orbits of non-degenerate $1$-spaces. If $d$ is even then the orbits can be distinguished by considering the
discriminant of the restriction of the quadratic form to
the subspace, and the actions on the two orbits are equivalent, so it is
enough to consider one of them.  If $d$ is odd then the orbits can be
distinguished by the sign of the restriction of the form to the
orthogonal complement.

\begin{lemma}\label{lem:nondeg1orth}
  Let $d\ge 4$, let $G= \PGO_d^\varepsilon(q)$ with $\varepsilon=-$ if
  $d=4$,
  and let $\Omega$ be a
  $G$-orbit of non-degenerate or non-singular 
  $1$-spaces.
 Then, up to equivalence,  the set $\BB$ in
Table~\ref{tab:nondeg1} is a base for the action of $G$ on $\Omega$. 
In particular, if $d\ge 6$ then $b(G)\leq d-1$,  $b(\PGO^-_4(q)) \le 3$ if $q \neq 3$, 
  and  $b(\PGO_5(q))\le 5.$  In addition, $b(\PGO^-_4(3)) = 4$.
\end{lemma}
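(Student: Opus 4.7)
The plan is to follow the pattern established in Lemmas~\ref{lem:sing_one} and \ref{lem:nondeguni1}: for each $d \ge 4$ and each choice of $\varepsilon$, exhibit an explicit set $\BB \subseteq \Omega$ as listed in Table~\ref{tab:nondeg1}, verify that every 1-space in $\BB$ is of the required isometry type, and then show that every $g \in \GO_d^\varepsilon(q)_{(\BB)}$ is scalar. Since the scalar subgroup of $\GO_d^\varepsilon(q)$ is $\langle -I_d\rangle$, lying in the kernel of the projection to $\PGO_d^\varepsilon(q)$, scalarity suffices to conclude that $\BB$ is a base. When $d$ is odd there are two non-equivalent orbits (distinguished by the sign of the complement), but the argument is structurally identical on both, hence the ``up to equivalence'' in the statement. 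The key mechanism is: each fixed non-degenerate (or non-singular, if $q$ is even) 1-space is an eigenline of $g$, Lemma~\ref{lem:orthogonal}(1) lets one propagate a common eigenvalue $\alpha$ across orthogonal triples, preservation of the form forces $\alpha^2 = 1$ via $B(u, v) = B(ug, vg)$, and a single ``diagonal'' test vector of the style of Lemma~\ref{lem:linone}(2) forces agreement of eigenvalues across orthogonally independent summands.

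For $d \ge 6$, I expect $\BB$ to consist of $d-1$ well-chosen non-degenerate 1-spaces, combining mutually orthogonal lines that span a hyperplane $U$ with further lines that cross-link the eigenvalues on $U$ and on $U^\perp$ to a common scalar; this yields $b(G) \le d-1$. For $d = 5$ an extra vector is required because the orthogonal complement of a non-degenerate hyperplane is a single 1-space whose eigenvalue cannot be tied down by the symmetric pairing alone without an additional diagonal witness, giving $b \le 5$. For $d = 4$, $\varepsilon = -$, and $q \ne 3$, three appropriately chosen non-degenerate 1-spaces already force scalarity via two applications of Lemma~\ref{lem:orthogonal}(1) and one form-preservation constraint; the exclusion $q \ne 3$ enters because over $\BBF_3$ there are too few non-zero scalars to realise the ``generic'' configuration of 1-spaces needed in the construction.

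The main obstacle is the lower bound $b(\PGO_4^-(3)) \ge 4$. Here $\PGO_4^-(3) \cong \sym(6)$ acts on a small orbit $\Omega$ of non-degenerate 1-spaces, and one must rule out every 3-element subset of $\Omega$ as a base. I would argue that for any triple $(\langle u_1\rangle, \langle u_2\rangle, \langle u_3\rangle)$ in $\Omega$, the pointwise projective stabiliser is non-trivial. Splitting by the dimension and isometry type of $\langle u_1, u_2, u_3\rangle$, in each case one exhibits a non-scalar isometry preserving all three lines: when the triple spans a non-degenerate 3-space, a reflection in the orthogonal complement works; otherwise one uses a diagonal involution in a basis adapted to the degeneracy. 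The combinatorial scarcity of scalars and of discriminant patterns over $\BBF_3$ is what drives the obstruction, and a short \MAGMA calculation makes the verification immediate. The matching upper bound $b \le 4$ is obtained from the $d = 4$ construction by adjoining one extra vector to compensate for the $q = 3$ defect identified above.
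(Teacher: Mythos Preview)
Your high-level strategy --- exhibit an explicit $\BB$, verify orbit membership, and prove every $g \in \GO_d^\varepsilon(q)_{(\BB)}$ is scalar via Lemma~\ref{lem:orthogonal} and the form --- is exactly the paper's approach, and your handling of $\PGO_4^-(3)$ (case analysis or \MAGMA) matches the paper's ``easy calculation''.

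One caution on your $d \ge 6$ sketch: the paper does \emph{not} build $\BB$ from mutually orthogonal lines spanning a hyperplane together with separate cross-linkers. Read literally, that scheme cannot meet the budget $d-1$: an orthogonal frame for a hyperplane already uses $d-1$ lines, each fixed only up to an independent sign, and tying those signs together costs further lines. The paper's device is instead a single \emph{anchor} vector --- $w_1(-1)=e_1+f_1$, or $x$, or $w_1(\alpha)$ with $-\alpha$ non-square, depending on $(\epsilon,\varepsilon)$ --- together with perturbations $\langle\text{anchor}+e_i\rangle$ and $\langle\text{anchor}+f_j\rangle$. Each such perturbation simultaneously introduces a new standard basis direction \emph{and} (via $Q(e_ig)=0$ or Lemma~\ref{lem:orthogonal}(1)) forces its eigenvalue to agree with the anchor's, so one line does double duty; a final line plus Lemma~\ref{lem:orthogonal}(2) handles the leftover hyperbolic pair, and the total is $d-1$. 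Your $d=5$ diagnosis is likewise slightly off: for $\epsilon=+$ four lines already suffice, and the fifth is needed only for $\epsilon=-$, where the anchor must be chosen with non-square $-\alpha$ to land in the correct orbit and the geometry is less accommodating.
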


\begin{table}\caption{Bases for $\NN(\PGU_d(q), 1)$ and 
    $\mathcal{N}^\epsilon(\PGO^\varepsilon_d(q),
    1)$}  \label{tab:nondeg1}

  For $\PGU_d(q)$, let
 $(v_1,\dots,v_d)$ be an orthonormal basis of $V$. 
 \begin{tabular}{| l| l| l|}
    \hline
 $d$ and $q$ & $\mathcal{B}$ & Comments \\
    \hline \hline
$d$ odd or $q > 2$&  $  \{ \langle v_1\rangle,\dots, \langle v_{d
                    -1}\rangle,\langle v(\mu) \rangle \} $ & $v(\mu)$
                                                             as
                                                             in
                                                             proof \\
  \hline
$d$ even and $q =2$& $\{\langle v_1\rangle,\langle v_2\rangle, \langle
        v_1+v_2+v_i \rangle \mid\; 3\leq i\le d  \} $  & \\
     \hline
    \end{tabular}

    \bigskip

  For $\PGO^\varepsilon_d(q)$, let $a$ be the Witt index, let
  $w_k(\nu) = e_k-\nu f_k$, and  let $-\alpha\in \BBF$  
      be non-square.
\begin{tabular}{|l|l|l|}
\hline
$(d, \,\epsilon,\, \varepsilon)$ & $\mathcal{B}$ & Notes\\
\hline \hline
$(4,\circ, -)$  & $\{\langle x \rangle, \langle  v_1
                          \rangle, \langle e_1+ v_2       \rangle \mid
               v_1, v_2\in \langle x, y \rangle,
               \,Q(v_1)\,\mbox{and}\, Q(v_2)\,\mbox{square},\,
               |\{\langle x\rangle, \langle v_1\rangle,\langle
               v_2\rangle\}| = 3\}$ & $q \neq 3$  \\
              \hline
$(5,+,\circ)$ & $\{\langle x \rangle, \langle e_1+x \rangle,
                        \langle f_1 + x\rangle, \langle e_2+x \rangle\}$
                      & \\

                 \hline

$(5,-,\circ)$ & $\left\{ \langle w_1(\alpha) \rangle,
                         \langle  w_1(\alpha) +e_2\rangle,
                         \langle  w_1(\alpha) + f_2\rangle,
                      \langle w_2(\alpha )+ e_1\rangle,  \langle
                         w_2(1+\alpha) 
                         + f_1 +x\rangle\right\}$ &  \\
\hline
$(\ge 6,\circ,+)$   & $\left\{\langle    w_1(-1)\rangle, \langle  w_1(-1)+e_i
               \rangle, \langle w_1(-1)+f_j \rangle ,  \langle
               e_1+ w_2(-1) \rangle\mid \, 2\le i \le a, \,2\le j\le
  a-1 \right\}$& \\

 \hline
$(\ge 6,\circ,-)$   & $\{\langle x \rangle, \langle v_1 \rangle, \langle e_i+v_2 
               \rangle, \langle f_j  + x\rangle, \mid \mbox{$v_1$ and
                      $v_2$ as in $d=4$},  \, 1\le i \le a, \,
    1\le j\le a-1\} $ &$q \neq 3 $   \\

& $\small{ \{\langle x \rangle, \langle e_i + x \rangle, \langle 
       w_1(1) + y
    \rangle, \langle f_j  + x \rangle \mid 1 \leq i \le a, 1 \le j \le
    a-1\}} $&   $ q = 3 $ \\
               \hline
$(\ge 7,+,\circ)$  & $\{\langle x \rangle, \langle e_i+x \rangle,
                     \langle f_j + x \rangle\mid
  \, 1\le i \le a, \, 1\le j\le a-1\}$
                      &  \\
                      
\hline
$(\ge 7,-,\circ)$ &  $ \left\{\langle w_1(\alpha) \rangle, \langle w_1(\alpha) + e_i
  \rangle, \langle w_1(\alpha) + f_j \rangle, \langle  e_1 + w_2(\alpha) 
  \rangle,  \langle w_2(1+\alpha) + f_1 +x\rangle   \right.$ &    \\
  &\hspace{7.7cm}  $ \mid \left.2\le i \le a,\; 2\le j\le a-1\right\}$ &    \\
 \hline 
 \end{tabular}
\end{table}

\begin{proof}
The result for $\PGO^-_4(3)$ is an easy
calculation.
Let 
$H=\GO_d^\varepsilon(q)$. 
We start with $d \le 5$,
and show first that $\BB$ 
is contained in a
single $G$-orbit of the appropriate type.
For $\GO^-_4(q)$,
all $1$-spaces in $\langle x, y \rangle$ are
non-degenerate. For $q$ odd,  they are partitioned into $(q+1)/2$ spaces
$\langle v \rangle$ such that $Q(v)$ is square, and $(q+1)/2$ with
$Q(v)$ non-square. Thus for $q \neq 3$, we may find $v_1, v_2 \in
\langle x, y \rangle$ that are linearly independent, not multiples of
$x$, and such that $Q(v_i)$ is square, so that $\BB$ is a
subset of a $G$-orbit. 

For $(d,\epsilon)=(5,+),$
notice that $\langle e_1 + x \rangle^\perp = \langle e_1, x
- 2f_1,  e_2, f_2 \rangle$ is of plus type, and similarly for the rest
of $\BB$, so $\BB \subseteq \Omega$.

For
$(d,\epsilon)=(5,-)$,
notice that $\langle
w_1(\alpha) \rangle^\perp =\langle
e_1 - \alpha f_1 \rangle^\perp =  \langle e_1 + \alpha f_1, x \rangle
\oplus \langle e_2, f_2 \rangle$, and the determinant of the
restriction of the bilinear form $B$ to $\langle e_1 + \alpha f_1, x\rangle$ is $4
\alpha$, which is square if and only if $\alpha$ is square. Since
$-\alpha$ is non-square, $\alpha$ is a square if and only if $q \equiv
3 \bmod 4$, so $\langle w_1(\alpha)  \rangle \in \Omega
$ by \cite[Prop 2.5.10]{liekle}.  Similarly, the restriction of $B$ to 
$\langle w_1(\alpha) +e_2 \rangle^\perp =\langle e_1-\alpha f_1+e_2 \rangle^\perp =\langle
e_1+\alpha f_1, x, f_1-f_2, e_2 \rangle$ has determinant $-4\alpha$,
which is always non-square,
so $\langle w_1(\alpha)+e_2 \rangle \in
\Omega
 $. Notice also that
$\langle w_2(1 + \alpha)+f_1 +x \rangle^\perp =\langle e_2- (1 +
\alpha) f_2+f_1 + x \rangle^\perp =\langle e_2+ (1+ \alpha) f_2,
x-2e_1, f_1, x-2f_2\rangle $, so a short calculation shows that this $1$-space is also in $
\Omega$. The argument for
the remaining $1$-spaces is similar, so $\BB \subseteq
\Omega$.

We show next that $\BB$ is a base, so let $g\in H_{(\BB)}.$
If $d=4$ then
  the assumption that $v_2$ is not a multiple
of either $x$ or $v_1$ combines with Lemma~\ref{lem:orthogonal}(1) applied to
$(x, v_1, e_1)$ to show that $g|_{\langle x, y,  e_1 \rangle} = \pm I_3$. Since $g$ stabilises $\langle x, y \rangle^\perp = \langle
e_1, f_1 \rangle$, Lemma~\ref{lem:orthogonal}(2) shows that $g = \pm
I_4$. For $d = 5$ and $\epsilon = -$,  it is
straightforward to see that $\BB$ forms a base for $G$. For $d = 5$
and $\epsilon = +$, notice that $g$ stabilises both
$\langle x \rangle^\perp$ and
$\langle x, e_1 + x, f_1  + x \rangle$, so stabilises $\langle e_i, f_i
\rangle$ for $i = 1, 2$. It is then easy to see that $g|_{\langle x,
  e_1, e_2, f_1 \rangle} = \pm I_4$, from which
Lemma~\ref{lem:orthogonal}(2) shows that $g = \pm I_5$. 

For the rest of the proof, assume that $d \ge 6$.
In cases  $(\epsilon, \varepsilon) = (+,\circ)$ and
$\varepsilon = -$ with $q \neq 3$, the arguments that $\BB$ is contained in a single $G$-orbit of the
appropriate type, and that $\BB$ is a base for
$H=\GO^\varepsilon_d(q)$,  are identical to those for $d \leq
5$, so we will omit them. 
In the other cases, let $g \in H_{(\BB)}$. We shall show that $\BB$ is
contained in a single $G$-orbit and that $g$ is scalar.

First consider $\varepsilon = +$. Then $Q(z) = 1$ for all $\langle z \rangle \in \BB$, so
$\BB$ is contained in a single $G$-orbit.   
 Let $(w_1(-1))g=(e_1 + f_1)g = \mu (e_1 +
f_1)$, so that $\mu \in \{\pm 1\}$. For $2\le i \le a$,  
there exists $\nu_i \in \BBF_q$ such that  
$$
(w_1(-1)+e_i)g =\nu_i(w_1(-1)+e_i)=\mu(w_1(-1))+e_i g.
$$
Hence  $e_i g = (\nu_i-\mu)(w_1(-1))+\nu_i e_i$, and $Q(e_i
g)=0$ yields $\nu_i =\mu$, so $e_ig = \mu e_i$ for $i \ge 2$. 
Similarly,  $f_i g =\mu f_i$  for  $2\le
i\le a-1$, and Lemma~\ref{lem:orthogonal}(2) then yields $f_a g = \mu
f_a$. 
Since $\langle (w_2(-1)) + e_1 \rangle \in \BB$, we deduce in the same
way that $e_1g =\mu e_1$, and then $(w_1(-1))g = \mu e_1+ f_1g$
shows that $f_1g=\mu f_1$, as required.

Next consider $\varepsilon = -$ and $q =3$. 
Then $Q(y) = 2$,
so $Q(y+w_1(1))=Q(y + e_1 - f_1) 
 = 1.$ It is clear that $Q(z) = 1$ for all other $\langle z \rangle$
 in $\BB$, so $\BB$ is contained in a single $G$-orbit. 
Notice that
 $g$ stabilises $W_1:= \langle x, e_1 + x, f_1 + x\rangle= \langle x, e_1, f_1 \rangle$ and also
 stabilises $W_2 := \langle W_1,  w_1(1) + y \rangle= \langle e_1,
 f_1, x,  y\rangle$. 
 Hence $g$
 stabilises $W_1^\perp \cap W_2 = \langle x+ y \rangle$, and so
 stabilises $U:= \langle x, y \rangle$ and $U^\perp$. Then $g$ stabilises
$\langle w_1(1) + y \rangle$ and $w_1(1) g\in U^\perp,$ so
 $g$ stabilizes $\langle y \rangle.$
Lemma~\ref{lem:orthogonal}(1),
applied to both $(x,y,e_i)$ and $(x,y,f_j),$ yields  $e_ig= \mu e_i$ and $f_jg= \mu f_j$ for $1\le i\le a$ and $1\le j\le a-1.$
   The result follows from Lemma~\ref{lem:orthogonal}(1) applied to $(y,x,w_1(1))$ and
 Lemma~\ref{lem:orthogonal}(2)  applied to $(e_a,f_a)$.

Finally, consider $(\epsilon,\varepsilon) = (-,\circ)$. 
First notice that $g$ stabilises
$$V_2:= \langle
w_1(\alpha) \rangle^\perp = \langle e_1 + \alpha f_1, x, e_2, \ldots, e_a, f_2,
\ldots, f_a \rangle.$$ 
In particular
$e_ig = u_i$  for some $u_i \in
V_2$ for $2\le i \le a$. Hence there exist $\mu \in \{\pm 1\}$ and $\nu_i \in \BBF_q^\ast$ such that  
$$
(w_1(\alpha) + e_i)g =\nu_i(w_1(\alpha) + e_i)=  \mu (w_1(\alpha) )+ u_i,$$
and so 
$u_i = e_ig=\mu e_i$ for $2 \le i \le a$. Similarly,
$f_jg= \mu f_j$  for  $2\le j\le a-1.$
Then applying Lemma~\ref{lem:orthogonal}(1) to $(e_2, f_2, e_1 + w_2(\alpha))$ shows that $e_1g=  \mu e_1$
and then $f_1 g= \mu f_1$, by Lemma~\ref{lem:orthogonal}(2). 
We now deduce that
$xg\in \langle e_{a}, f_{a}, x\rangle$, and so
from $\langle w_2(1+\alpha)+ f_1 +x \rangle \in \BB$ we see that $xg
=\mu x$.  The result follows from Lemma~\ref{lem:orthogonal}(2). 
\end{proof}

We now prove that the bound for even-dimensional orthogonal groups
in Lemmas~\ref{lem:sing_one} and \ref{lem:nondeg1orth} is tight.
\begin{lemma}\label{lem:orth_sin_1_tight}
 Let $d\ge 6$ be even, and let $G =
        \PGO^\pm_d(q)$. Let $\AA =\{ \langle v_1 \rangle, \ldots, \langle
        v_{d-2} \rangle\}$ be a set of $d-2$ one-dimensional
        subspaces of the natural module $V$ for $G$.
        Then $G_{(\AA)}$ is
        nontrivial. In particular, if $\Omega$ is a $G$-orbit of
        $1$-dimensional subspaces, then $b(G, \Omega) =  d-1.$
\end{lemma}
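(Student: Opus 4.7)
The plan is to construct, for any choice of $\AA = \{\langle v_1\rangle, \ldots, \langle v_{d-2}\rangle\}$, a non-scalar isometry $g \in \GO^\varepsilon_d(q)$ that fixes $U := \langle v_1, \ldots, v_{d-2}\rangle$ pointwise. Such a $g$ maps to a non-trivial element of $G_{(\AA)}$ where $G = \PGO^\varepsilon_d(q)$, so $G_{(\AA)} \neq 1$, giving $b(G, \Omega) \geq d - 1$; combined with Lemmas~\ref{lem:sing_one} and~\ref{lem:nondeg1orth} this yields $b(G, \Omega) = d - 1$. The key observation is that $\dim U \leq d - 2$ forces $\dim U^\perp \geq 2$, and the construction splits into two exhaustive cases according to whether $U^\perp$ contains a non-singular vector or is totally singular.

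If $U^\perp$ contains a non-singular vector $w$, I would use the reflection $r_w(x) = x - 2 B(x,w) Q(w)^{-1} w$ in odd characteristic, or the orthogonal transvection $t_w(x) = x + B(x,w) Q(w)^{-1} w$ in characteristic $2$: each is a non-scalar isometry fixing the hyperplane $w^\perp \supseteq U$ pointwise.

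Otherwise $U^\perp$ is totally singular, so $U^\perp \subseteq (U^\perp)^\perp = U$. Setting $W := U^\perp$, we have $U = W^\perp$ and $W$ is the radical of $U$, with $r := \dim W \geq 2$. I would choose a totally singular complement $W'$ of $W$ in a hyperbolic sum, with dual bases $e_1, \ldots, e_r$ of $W$ and $f_1, \ldots, f_r$ of $W'$ satisfying $B(e_i, f_j) = \delta_{ij}$, so that $V = W \oplus W' \oplus C$ with $C := (W \oplus W')^\perp$ and $U = W \oplus C$. Then for any nonzero $r \times r$ matrix $A = (\alpha_{ij})$ over $\BBF_q$ that is skew-symmetric with zero diagonal, the linear map $g$ acting as the identity on $W \oplus C$ and by $g(f_i) = f_i + \sum_j \alpha_{ij} e_j$ on $W'$ should give the required unipotent non-scalar element of $\GO^\varepsilon_d(q)$. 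The main obstacle is the isometry verification for this Siegel-type element: the key computation is that $B(g f_i, g f_j) = \alpha_{ij} + \alpha_{ji}$ and $Q(g f_i) = \alpha_{ii}$, so the skew-symmetry and zero-diagonal conditions on $A$ are precisely what forces $g \in \GO^\varepsilon_d(q)$ (the zero-diagonal condition being automatic in odd characteristic but an additional constraint in characteristic $2$). Since $r \geq 2$, the space of such matrices has dimension $r(r-1)/2 \geq 1$, so nonzero $A$ exist and the construction produces the required $g$.
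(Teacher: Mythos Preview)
Your proof is correct and follows the same overall strategy as the paper: exhibit a non-scalar isometry fixing the span of the $v_i$ pointwise. The organisation differs, however, and yours is tidier. The paper first enlarges $\langle\AA\rangle$ to a $(d-2)$-space $W$ and then splits into four sub-cases according to whether $W$ is non-degenerate and, if not, according to the dimension and singularity of $\rad(W)$; in each sub-case it writes down an explicit element (the full $\GO(W^\perp)$, a swap $e_1\leftrightarrow f_1$, a reflection in a vector outside $W$, or a hand-built unipotent element when $\dim\rad(W)=2$). Your two-case split---according to whether $U^\perp$ contains a non-singular vector---absorbs all of this: the paper's swap is precisely your orthogonal transvection $t_w$ with $w=e_1+f_1$, its reflection case is your $r_w$, and its unipotent element is your Siegel transformation with $r=2$ and $A=\left(\begin{smallmatrix}0&1\\-1&0\end{smallmatrix}\right)$. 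Working directly with $U$ rather than an enlargement also lets your Siegel construction handle all $r\ge 2$ uniformly. The only small point worth making explicit is that a totally singular $W=U^\perp$ always admits a totally singular dual complement $W'$ (a standard consequence of Witt's theorem), which you use implicitly.
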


\begin{proof}
   Let $H = \GO^\pm_d(q)$, let $W$ be
  any $(d-2)$-space containing
 $\langle \AA\rangle,$
  and let 
  $K$ denote the
  subgroup of $H$ that acts as scalars on $W$. We shall show that
there exists a nonscalar element of 
  $K$, from which the result will follow. 

If $W$ is non-degenerate, then $K$ 
contains a subgroup which acts as $\GO(W^\perp) \neq 1$ on $W^\perp$,
so the result is immediate. Thus we may assume
that $W$ is degenerate, so $U := \rad(W) = W \cap W^\perp$ is a
non-zero subspace of $W$, of dimension $1$ or $2$. 

  First assume that there exists a $u \in U$ such that $Q(u) \neq
  0$. This implies that $q$ is even, so
  $H$ has a single orbit on non-singular
  $1$-spaces, and without loss of generality we can assume that $u =
  e_1 + f_1$.
   This implies that
    $e_1, f_1 \not\in W$.
  We define $g \in \GL(V)$ by
  $$e_1 g= f_1, \quad f_1 g = e_1,  \quad zg = z \mbox{ for all } z
  \in \langle e_1, f_1 \rangle^\perp.$$
  Let $v \in V$. Then 
  $v =  \alpha e_1 + \beta f_1 + z$,
for some $\alpha, \beta \in \BBF_q$ and $z \in \langle e_1, f_1
\rangle^\perp$, and
it is easy to verify that $
Q(v  g) = Q(v)$, and so $g \in H$. 
Furthermore, if $w \in W$ then $B(w,
  e_1 + f_1)= 0$, so $w = \gamma e_1 + \gamma f_1 + z$,
  for some $\gamma \in
  \BBF_q$ (recalling that $q$ is even) and $z \in \langle e_1, f_1\rangle^\perp$. Hence $wg = w$, so
  $g \in K$, as required.

Assume instead
   that $Q(u) = 0$ for all $u \in U$, and consider first the case
   $\dim(U) = 1$.  Then we can write $W = \langle u \rangle
  \perp W'$, with $\rad(W') = 0$. If $q$ is even this contradicts the fact that
  $\dim W = d-2$ is even, so  $q$ is odd.
 There exists a $u' \in V \setminus W$ such that $B(u, u') \neq 0,$
 and we let
$W_1 = \langle W, u'\rangle$. Then $W_1$ is non-degenerate, and
$\dim(W_1) = d-1$, so $\dim(W_1^\perp) = 1$. Let $\langle z\rangle=W_1^\perp$, and define $g\in \GL(V)$ by
$$zg=-z, \quad w g=w \, \mbox{ for all } w \in W_1. $$
Let $v \in V$. Then $v = w + \alpha z$, 
for some $w \in W_1$ and $\alpha \in \BBF_q$, so
$
Q(v g)  = Q(w - \alpha z) =
Q(w) + (-\alpha)^2Q(z) = Q(v),$
            so $g \in K$, as required.
  
Finally consider the case $\dim(U) = 2.$
We fix $u_1\in U\setminus \{0\}$.
 There exists a vector
  $t_1 \in V \setminus W$ such that $(u_1, t_1)$ is a hyperbolic
    pair. 
Furthermore, 
$\langle u_1,t_1\rangle^\perp \cap U$ is $1$-dimensional, with basis $u_2$, say,
 and there exists $t_2 \in \langle u_1,t_1\rangle^\perp  \setminus W$ such that $(u_2, t_2)$ is a hyperbolic
    pair. 
  Since
            $t_1, t_2 \not\in W$, we may define an element
$g \in \GL(V)$ by
$$t_1g = t_1 + u_2, \quad t_2 g = t_2 -  u_1,  \quad w g = w \mbox{ for
  all } w \in W.$$
Let $v \in V$. Then $v = 
\alpha  t_1 + \beta t_2+w$ for some $\alpha, \beta \in \BBF_q$ and 
$w \in W$, and so
$$\begin{array}{rl}
Q(v g) & = Q(\alpha (t_1 + u_2) + \beta(t_2 - u_1)) + Q(w)   +
           B(\alpha (t_1 + u_2) + \beta (t_2 - u_1), w) \\
& = -\alpha \beta + \alpha \beta + Q(w) + B(\alpha t_1 +
    \beta t_2, w) = Q(v),
\end{array}$$
so $g \in K$, as required.
\end{proof}

\begin{table}\caption{Bases for 
    $\mathcal{N}(G, 2)$ and $\mathcal{N}^+(G, 2)$}\label{tab:k=2nonsing}
Let $V_1 = \langle e_1, f_1 \rangle, V_2 =
   \langle e_2, f_1 + f_2\rangle $, $W_i=\langle e_1 + e_i,
    f_2+f_i\rangle$, $\AA = \{V_1, V_2,  W_i \ : \ 3 \leq i \leq
   a  - 1\} $.   
\begin{tabular}{|l|l|l|}
\hline
$G$ & $\mathcal{B}$ & Notes \\
  \hline
\hline
$\PSp_6(q), \; \PGU_6(q)$ & $\{V_1, V_2 , W_3, \langle e_1 + e_2, 
                            e_1+ f_2 + f_3\rangle\}$ & $q$ odd\\
  \hline $\PGU_{2a}(q)$, $\PSp_{2a}(q)$, $\PGO^+_{2a}(q)$
    & $\AA \cup \{\langle e_2 + e_a + f_1, e_1 + f_2+f_a\rangle \}$ & $q$ even, $a \ge 4$\\
           
   & $\AA \cup \{\langle e_a + f_1, e_2 + f_1 +f_a
                \rangle \}$ &  $q$ odd,  $a \ge 4$ \\
   \hline
  $\PGO^-_{2a}(q)$  & $\AA \cup \{\langle e_1-f_1+x,
\zeta e_2-f_2+y\rangle \}$ & $a \ge 4$  \\
  \hline
  $\PGU_{2a-1}(q)$ & $\AA \cup \{\langle \lambda e_1 - f_1 + x,
\lambda e_2 - f_2 + x\rangle\}$ & $\tr(\lambda) =
                                                  1$, $a \ge 3$ \\
                                                  
  \hline
  $\PGO_{2a-1}(q)$ & $\AA \cup \{\langle  e_1 - f_1 + x ,
 e_2 - f_2 + x \rangle\}$ & $a \ge 4$ \\
 \hline
\end{tabular}
\end{table}

\begin{lemma} \label{lem:twonsing}
	Let $G \in \{\PGU_d(q), \PSp_d(q),
        \PGO_d^{\varepsilon}(q)\}$ with $d\ge 5$, and $d\ge 7$ if $G$
        is orthogonal, let $\Omega=\mathcal{N}^+(G, 2)$ when $G$
        is orthogonal and $\Omega=\mathcal{N}(G, 2)$ otherwise, and
        let $b = b(G, \Omega)$. 
        Then the set $\BB$ in
Table~\ref{tab:k=2nonsing} is a base for the action of $G$ on
$\Omega$. In particular,  if $d \ne 6$ then $b\le \lceil\frac{d}{2}\rceil$,
and if $d = 6$ then $b \le 4$.
\end{lemma}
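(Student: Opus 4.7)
The plan is to adapt the proof of Lemma~\ref{lem:twosing} to the non-degenerate setting. Let $H$ denote the corresponding full isometry group, and let $g\in H_{(\BB)}$; the aim is to show that $g$ is central in $H$. I would sketch the argument for $d\ge 7$, since the small-dimensional cases are analogous but shorter.

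First I would verify that $\BB\subseteq\Omega$ by computing, for each row of Table~\ref{tab:k=2nonsing}, the Gram matrix of the two generators (and, in the orthogonal rows, checking that the restricted quadratic form is of $+$-type). Next, because $V_1=\langle e_1,f_1\rangle$ and $V_2=\langle e_2,f_1+f_2\rangle$ are non-degenerate and $g$-invariant, so are $V_1^\perp$ and $V_2^\perp$. A short calculation gives $V_1\cap V_2^\perp=\langle f_1\rangle$ and $V_2\cap V_1^\perp=\langle e_2\rangle$, forcing $f_1g=\gamma f_1$ and $e_2g=\beta e_2$ for some scalars $\gamma,\beta$. Using that $e_i,f_i\in(V_1+V_2)^\perp$ for $i\ge 3$, I would observe that $W_i+(V_1+V_2)^\perp=\langle e_1,f_2\rangle+(V_1+V_2)^\perp$; intersecting with $V_1+V_2$ shows that $\langle e_1,f_2\rangle$ is $g$-stable. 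Intersecting this with $V_1$ yields $e_1g=\alpha e_1$, and combining with $V_1^\perp$-invariance gives $f_2g=\gamma' f_2$. Form preservation then forces $\beta=\alpha$, $\gamma'=\gamma$, and relates $\gamma$ to $\alpha$ (as $\alpha^{-1}$ in the symplectic and orthogonal cases, and as $\alpha^{-q}$ in the unitary case).

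For each $3\le i\le a-1$ I would then use stabilization of $W_i$ together with $e_ig,f_ig\in(V_1+V_2)^\perp$ to force $e_ig=\alpha e_i$ and $f_ig=\gamma f_i$, exactly as in the totally singular case. The final step is to exploit the last element of $\BB$ in each family to determine the action of $g$ on the remaining basis vectors ($e_a,f_a$, and, where they appear, the anisotropic vectors $x,y$) and to pin $\alpha$ to the centre of $H$—that is, $\alpha=\pm 1$ in the symplectic and orthogonal cases, or $\alpha^{q+1}=1$ in the unitary case. In the orthogonal rows this scalar constraint will come from applying $Q$ to the $g$-image of $f_a$ (or an anisotropic vector), as it already did in Lemma~\ref{lem:sing_one}; in the unitary rows the condition $\tr(\lambda)=1$ from the table is precisely what produces the analogous relation. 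The main obstacle is this last step: it requires a distinct verification for each row of Table~\ref{tab:k=2nonsing}, and in particular the characteristic two cases need separate treatment, since there $Q$ behaves differently on the isotropic radical of the relevant $2$-space.
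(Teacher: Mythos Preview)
Your sketch follows essentially the same route as the paper: pass to the full isometry group, use the non-degeneracy of $V_1$ and $V_2$ together with suitable intersections to pin down $e_1,e_2,f_1,f_2$ as eigenvectors, propagate to $e_i,f_i$ for $3\le i\le a-1$ via the $W_i$, and finish case-by-case with the last element of $\BB$. The only difference is cosmetic—you obtain the $g$-invariance of $\langle e_1,f_2\rangle$ from the sum $W_i+(V_1+V_2)^\perp$, whereas the paper reaches the same conclusions by applying Lemma~\ref{lem:linone}(3) directly to $W_i$—and the two arguments are interchangeable.
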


\begin{proof}
It is straightforward to verify that the given basis of each space in
$\BB$
is a hyperbolic pair, so
$\BB \subseteq  \Omega$ in each case. 
The arguments for $d \le 6$ are similar to, but more straightforward
than, those that follow, so we shall assume
that $d \ge 7$, so that  $a = \lceil d/2 \rceil \geq 4$.
Let $H$ be $\GU_d(q)$, $\Sp_d(q)$ or
$\GO^\varepsilon_d(q)$
and let $g\in H_{(\BB)}.$

From $V_2^g=V_2$, it follows that
$(f_1+f_2)g =\beta(f_1+f_2)+\gamma e_2 = f_1 g + f_2g$
for some $\beta, \gamma \in \BBF.$ Then  $f_1g \in V_1$ and $f_2g \in
V_1^\perp$, so
equating
coefficients yields 
\[ f_1g=\beta f_1 \; \mbox{and}\;\; f_2g=\beta f_2+\gamma e_2. \]
Next, notice that $g$ stabilises $V_1^\perp \cap V_2=\langle
e_2\rangle,$ so $e_2g=\alpha e_2$, where $\beta = \alpha^{-q}$ since
$B(e_2g, f_2g) = 1$.

We shall show next that 
\begin{equation}\label{eq:gen2n}
  e_i g = \alpha e_i \mbox{ and } f_ig = \alpha^{-q}f_i  \quad \mbox{ for } 1 \leq i \leq a-1.
\end{equation} 
For $3 \le i \le a-1$, the element $g$ stabilises
$\langle V_1, V_2,
W_i \rangle \cap \langle V_1, V_2 \rangle^\perp= \langle e_i, f_i
\rangle$.
Then Lemma~\ref{lem:linone}(3), applied to 
$T = W_i$, first with $u=e_1+e_i$ and $s=f_2$, since $f_2g\in
\langle e_2,f_2\rangle,$ and then with $u=f_2+f_i$ and $s=e_1$, since $e_1g\in V_1$, 
shows that there exist $\nu_i,
\eta_i, \eta, \delta \in \BBF$ such that
$$\begin{array}{rl}
  (e_1+e_i)g & =\nu_i(e_1+e_i) =(\eta e_1+\delta f_1)+ e_ig\\
  (f_2+f_i)g & = \eta_i(f_2+f_i) =(\alpha^{-q} f_2+\gamma e_2)+ f_ig.
\end{array}$$
Equating
coefficients 
shows
that
$e_1g=\eta e_1$, $e_i g= \eta e_i$, $f_2g=\alpha^{-q} f_2,$ and $f_i g= \alpha^{-q} f_i$.
Finally, $B(e_1g,f_1g)=1$  yields
$\eta =\alpha$, and so \eqref{eq:gen2n} follows.

Finally, we apply Lemma~\ref{lem:linone}(3) to the final subspace, $T
=\langle a,b\rangle$ say, in Table~\ref{tab:k=2nonsing}. 
When $d$ is odd, setting $u=a$ and $s=e_2$ shows that $\langle a
\rangle^g =\langle a  \rangle,$ so
$\alpha = \alpha^{-q}$ and $xg = \alpha x$. 
If $d$ is even,
we deduce both that $\langle a\rangle^g=\langle a\rangle$ and $\langle b\rangle^g=\langle b\rangle,$ and hence $g=\alpha I_d.$
\end{proof}

A pair $(u, v)$ of vectors is an \emph{elliptic pair} if $Q(u) = 1$,
$Q(v) = \zeta$, for some $\zeta \in \BBF$ such that $X^2 + X + \zeta$
is irreducible, and $B(u, v) = 1$. Any elliptic pair spans
a $2$-space of minus type.

\begin{lemma} \label{lem:twonondeorth} 
Let $G =\PGO^{\varepsilon}_d(q)$, with $\varepsilon\in \{\circ, +,
-\}$
and $d\ge 7$, and let $\Omega= \mathcal{N}^-(G,2).$
    Then the set $\BB$ in
Table~\ref{tab:k=2minustype} is a base for the action of $G$ on
$\Omega$. Consequently, $b(G, \Omega)\le \lceil\frac{d}{2}\rceil$. 
\end{lemma}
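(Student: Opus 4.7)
The plan is to follow the template of the proof of Lemma~\ref{lem:twonsing}, replacing hyperbolic pairs by elliptic pairs and adjusting the algebra to accommodate the minus-type condition. First I would verify that each spanning pair listed in Table~\ref{tab:k=2minustype} is genuinely an elliptic pair in the sense just defined (so $Q(u)=1$, $Q(v)=\zeta$, $B(u,v)=1$), which ensures each $2$-space has minus type and hence $\BB \subseteq \Omega$. As in Lemma~\ref{lem:twonsing}, the small cases would be handled by more straightforward versions of the same arguments, so I would concentrate on the generic case $a=\lceil d/2\rceil \ge 4$.

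Let $H=\GO^\varepsilon_d(q)$ and take $g \in H_{(\BB)}$. The first stage is to analyse $g|_{V_1\oplus V_2}$, where $V_1, V_2$ are the two ``anchor'' subspaces in Table~\ref{tab:k=2minustype}. From $V_i^g=V_i$ and the stabilisation of the perpendicular intersections $V_1\cap V_2^\perp$ and $V_2\cap V_1^\perp$, one writes $e_i g, f_i g$ ($i=1,2$) in terms of a small number of unknown scalars, exactly as was done for the $+$-type version. The extra data carried by elliptic pairs, namely $Q(e_ig)=Q(e_i)=0$, $Q(f_ig)=Q(f_i)=0$, together with $B(e_ig,f_ig)=1$, then collapses these scalars down to a single $\alpha$, with $f_ig=\alpha^{-q}f_i$ for $i=1,2$.

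Next, for each bridging subspace $W_i$, $3\le i\le a-1$, I would apply Lemma~\ref{lem:linone}(3) twice: once with a spanning vector involving $e_i$ and a coordinate $s$ lying outside the support of $e_1g$ (to conclude $e_ig=\alpha e_i$), and once with a spanning vector involving $f_i$ and a coordinate $s$ outside the support of $f_2g$ (to conclude $f_ig=\alpha^{-q}f_i$). This propagates $\alpha$ across all paired basis vectors with index $\le a-1$, exactly as in Lemma~\ref{lem:twonsing}.

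The main obstacle, and where the minus-type condition is most delicate, is the final ``tail'' subspace of $\BB$, which in the various cases involves $x$ only (when $\varepsilon=\circ$), $x$ and $y$ together (when $\varepsilon=-$), or $e_a, f_a$ (when $\varepsilon=+$). Here I would apply Lemma~\ref{lem:linone}(3) with the $s$-coordinate chosen to isolate each tail vector in turn, forcing for example $xg=\alpha x$; the extra constraint $Q(y)=\zeta$ in the $\varepsilon=-$ case has to be matched against the coefficients of $\zeta$ appearing in the tail spanning vector, which is what dictates the precise form of that vector in Table~\ref{tab:k=2minustype}. Lemma~\ref{lem:orthogonal}(2) then handles any remaining hyperbolic pair $(e_a,f_a)$, and we conclude that $g=\alpha I_d$, so $g$ is scalar and $b(G,\Omega)\le |\BB|=\lceil d/2\rceil$.
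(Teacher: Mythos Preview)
Your plan has a genuine structural gap at the first stage. In Lemma~\ref{lem:twonsing} the anchor spaces were $V_1=\langle e_1,f_1\rangle$ and $V_2=\langle e_2,f_1+f_2\rangle$, so individual standard basis vectors lie in the $V_i$, and the one-dimensional intersection $V_1^\perp\cap V_2=\langle e_2\rangle$ is what lets you pin down $e_2g$ directly. In the minus-type table none of $e_1,f_1,e_2,f_2$ lies in $V_1$ or $V_2$; the spanning vectors are combinations like $e_1+f_1$ and $e_2+f_1+\zeta f_2$. A short computation shows that for generic $\zeta$ the intersections $V_1\cap V_2^\perp$ and $V_2\cap V_1^\perp$ are zero, so your proposed step of reading off $e_ig,f_ig$ from those intersections yields nothing. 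You cannot, from $V_1,V_2$ alone, separate $e_1$ from $f_1$ and reach the conclusion $f_ig=\alpha^{-q}f_i$ that you state (and note that for an orthogonal form $\alpha^{-q}$ should simply be $\alpha^{-1}$).

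The paper's argument proceeds in essentially the opposite order. It never tries to control $e_1,f_1$ separately at the start; instead it couples $V_1$ with each $W_i$ ($3\le i\le a-1$) to deduce $(e_1+f_1)g=\alpha(e_1+f_1)$ and $e_ig=\alpha e_i$ for $3\le i\le a-1$, keeping $e_1+f_1$ as an indivisible unit. The key computation you are missing then comes from $W_3$: one finds $e_2g=\alpha^{-1}e_2+\zeta(\alpha-\alpha^{-1})(e_1+f_1)$, and only the condition $Q(e_2g)=0$ forces $\alpha=\alpha^{-1}=\pm 1$. The separation of $e_1$ from $f_1$ (and the control of $f_2$) is deferred to the very last step, where the tail subspace $V_3$, $V_4$ or $V_5$ provides a vector whose support distinguishes them. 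Your outline would need to be reorganised around the combination $e_1+f_1$ rather than the individual $e_i,f_i$, and to insert the $Q(e_2g)=0$ step, before the endgame you describe becomes available.
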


\begin{table}\caption{Bases for 
    $\mathcal{N}^-(G, 2)$}\label{tab:k=2minustype}
 Let $V_1 = \langle e_1+f_1, e_2+ f_1 + \zeta f_2\rangle$.
 If $\zeta \neq 1$ then let $V_2 =  \langle e_2+f_1 + f_2,  e_1+\zeta
 f_1 \rangle$, otherwise let $V_2 = \langle e_1 + f_1 + f_2,  e_2 + f_2\rangle$.
 Let $W_i=\langle e_1+e_i + f_1, e_2+ \zeta e_i+f_i \rangle$ and
 $\AA = \{V_1, V_2,  W_i \ : \ 3 \leq i \leq
    a  - 1\} $.    
    \begin{tabular}{|l|l|}
      \hline
      $G$ & $\mathcal{B}$ \\
      \hline  \hline

  $\PGO_{2a-1}(q)$ & $\AA \cup \{V_3 =\langle e_2+f_1+x, e_1+e_3+\zeta
                     f_3 \rangle\} $ \\
      \hline
$\PGO^+_{2a}(q)$
    & $\AA \cup \{V_{4}= \langle e_1+e_2+f_2+f_a,  e_3+f_1+\zeta f_3\rangle  \}$ \\                                  \hline
  $\PGO^-_{2a}(q)$& $\AA \cup \{V_5 =\langle e_1-e_3+x, f_1+f_3+y\rangle \}$ \\
    \hline
\end{tabular}
\end{table}

\begin{proof}
Fix  $\zeta$ such that $X^2 + X + \zeta$ is
irreducible, with $\zeta = Q(y)$ if $\varepsilon = -$.
Let $a = \lceil d/2 \rceil\ge 4$. 
One may check
that the given ordered
basis vectors 
for each $2$-space in $\BB$
form an elliptic
pair, and so 
$\BB \subseteq \Omega$. For example, $Q(e_1 + f_1) =
1$, $Q(e_2 + f_1 + \zeta f_2) = B(e_2, \zeta f_2) = \zeta$, and $B(e_1
+ f_1, e_2 + f_1 + \zeta f_2) = B(e_1, f_1) = 1$. 
Let $H=\GO^\varepsilon_d(q)$, and let $g \in
H_{(\BB)}$. 
To show that $\BB$ is a base for $G$, it suffices
to show that $g$ is scalar.

We shall show first that there exists $\alpha = \pm 1$ such that
\begin{equation}\label{eq:gen2nondegorth2}
(e_1 + f_1)g= \alpha(e_1 + f_1), \quad e_i g= \alpha e_i \mbox{ for }
2 \le i \le a-1, \quad  f_i g= \alpha f_i \quad \mbox{for} \;3\le i\le a-1.
\end{equation} 	

Let $U = \langle V_1, V_2 \rangle = \langle e_1, e_2, f_1, f_2\rangle$.
Then
$g$ stabilises $U_i:= \langle U,
W_i \rangle$ for $3 \le i \le a-1$, and so stabilises $U_i \cap U^\perp = \langle e_i, f_i \rangle$.
Since $g$ stabilises $W_i$ and $V_1$, 
 there exist $\mu_i,\nu_i, \alpha, \nu \in \BBF$ such that
\begin{align*}
(e_1+e_i+f_1)g&=\mu_i(e_1+e_i+f_1)+ \nu_i (e_2+\zeta e_i+ f_i) \in W_i\\
& = (e_1+f_1)g+e_ig =\alpha (e_1+ f_1)+ \nu (e_2+ f_1+\zeta f_2)+ e_ig.
\end{align*}
Since $e_i g \in \langle e_i, f_i \rangle$, looking at $f_2$, we see that
$\nu=0$. Hence $\nu_i=0$,  and so $(e_1+f_1)g=\alpha(e_1+f_1)$ and $e_i g = \alpha e_i$
for $3 \le i \le a-1$. 
We now apply Lemma~\ref{lem:orthogonal}(2) to
$(e_i,f_i)$ to see that
$f_ig=\alpha^{-1} f_i$ for $3 \le i \le a-1$. 

To prove \eqref{eq:gen2nondegorth2}, 
it remains to prove
that $e_2g= \alpha e_2=\pm e_2$.
Considering $W_3$ shows that
$$(e_2+\zeta e_3+f_3)g= e_2 g + \zeta e_3 g + f_3 g 
= e_2g + \zeta  \alpha e_3 + \alpha^{-1} f_3 = \lambda (e_2+ \zeta e_3+
f_3)  + \mu (e_1 + e_3 + f_1),$$ for some $\lambda, \mu \in \BBF$.
Then $e_2g \in U$, so considering $f_3$ gives $\lambda  = \alpha^{-1}$. Then considering
$e_3$ yields $\mu = \zeta(\alpha - \alpha^{-1})$, and so $e_2 g =
\alpha^{-1} e_2 + \zeta(\alpha - \alpha^{-1})(e_1 + f_1)$. Finally,
$Q(e_2 g) = 0$ shows that $\alpha = \alpha^{-1} = \pm 1$ and so
\eqref{eq:gen2nondegorth2} is verified. 

\medskip

Let $\AA \subseteq \BB$ be as in
Table~\ref{tab:k=2minustype}, and let $X = \langle \AA \rangle$.
Then
$g$ stabilises $X^\perp$.  
If we can show that either of $e_1g = \alpha e_1$ or $f_1 g = \alpha
f_1$, then it follows from \eqref{eq:gen2nondegorth2} that the same is
true for the other. In particular, this will imply that 
$\langle
e_1, f_1 \rangle^g= \langle e_1, f_1\rangle$. It will then follow from
$U^g = U$ that
$\langle e_2, f_2 \rangle^g = \langle e_2, f_2 \rangle$. Hence, it
will follow from Lemma~\ref{lem:orthogonal}(2) applied to $(e_2,
f_2)$ that $f_2 g=\alpha^{-1}f_2 = \alpha f_2$. 
Hence, to show that
$g$ is scalar it suffices to show that $vg = \alpha v$ for either $v
= e_1$ or $v  = f_1$,  and for whichever of $v \in \{e_a, f_a, x, y\}$
is defined. We shall use \eqref{eq:gen2nondegorth2}
implicitly. 

\medskip

If $d$ is odd, then Lemma~\ref{lem:linone}(3) with $T=V_3$ and
initially with $u = e_1+e_3+\zeta f_3$ and  $s = x$,
gives
$ug= \alpha u$, and so $e_1g=\alpha e_1$ and hence $f_1 g = \alpha f_1$.
Now, setting $u = e_2 + f_1 + x$ and $s = e_1$ shows that  $u g=
\alpha u$.  Hence $x g= \alpha x$, and so $g$ is scalar. 

\smallskip

If $\varepsilon = +$ then
Lemma~\ref{lem:linone}(3) applied to  $T
= V_4$, $u=e_3+f_1+\zeta f_3$ and $s=f_a\in X^\perp$
shows that $ug=\alpha u.$ Hence
$f_1 g= \alpha f_1$ and so $e_1 g=
\alpha e_1$. 
Next, letting $s = f_1$ shows that
$f_a g= \alpha f_a$. Finally, Lemma~\ref{lem:orthogonal}(2) applied to
$f_a, e_a \in X^\perp$ proves that $g$ is scalar.

\smallskip

Finally, if $\varepsilon = -$ then $X^\perp=\langle x,y \rangle.$
Lemma~\ref{lem:linone}(2), applied to $T = V_5$ with
$s = f_3$  yields 
$(e_1-e_3+x)g =\alpha(e_1-e_3 + x)$, and applied again with $s = e_3$
shows that 
$(f_1+f_3 + y)g =\alpha(f_1+f_3+y)$. 
Hence 
$g=\alpha I_d$.
\end{proof}	

\subsection{Symplectic groups on the cosets of orthogonal groups}

We consider $\Sp_{d}(q)$, acting
on the cosets of $\GO^{\pm}_d(q)$, with $q$ even.

\begin{table}\caption{Bases for $H = \GO_{2m+1}(q) \cong
    \Sp_{2m}(q)$ on $\NN^{\pm}(H, 2m)$, with $q$ even}\label{tab:symplecticoonorthogonal}
\noindent
Let $X^2 + X + \lambda^2$ be
irreducible.  Let $A_i=\langle e_i, f_i \rangle$
and $B_i =
\langle e_i + x, f_i + \lambda x\rangle$.
  \begin{tabular}{|l|l|}
   \hline
   $\NN$ & $\BB$ \\
  \hline\hline 
$\NN^+(H, 2m)$
    &
 $\left\{T = \bigoplus_{i = 1}^m  A_i, \  \ U_i = A_1  \oplus    \dots  \oplus A_{i-1}\oplus
      \langle e_i, f_i+x \rangle \oplus A_{i+1}\oplus \dots \oplus
      A_m \right.$\\
    & $
      V_{j} = A_1 \oplus \cdots
      A_{j-1} \oplus \langle e_j+x, f_j \rangle  \oplus  A_{j+1}
      \oplus \cdots \oplus A_m \mid \ 1\le i \le
      m, 1\le j \le
      m-1\}$\\
  \hline
$\NN^-(H, 2m)$
    & 
      $\left\{
      T= B_1 \oplus (\bigoplus_{i = 2}^m
      A_i), \ \ U_{i} = B_1 \oplus A_2\oplus   \dots  \oplus
      A_{i-1}\oplus  \langle e_i, f_i+x \rangle \oplus A_{i+1}\oplus
      \dots \oplus A_m  \right. $\\
   & $V_j =B_1 \oplus A_2\oplus
     \dots  \oplus A_{i-1}\oplus  \langle e_j+x, f_j \rangle \oplus
     A_{i+1}\oplus \dots \oplus  A_m, $ \\
   & $W_1 = A_1 \oplus  B_2 \oplus A_{3}\oplus \dots
     \oplus A_m,  \ \   W_2 =\langle e_1, f_1+x\rangle\oplus  B_2
    \oplus A_{3}\oplus \dots \oplus A_m$ \\      
& $\hspace{8cm}  \mid \ 2 \le i \le m, \ 2 \le j \le m-1\}$ \\                               
  \hline
\end{tabular}
\end{table}

\begin{prop} \label{prop:exactbasesize}
  Let $G=\Sp_{2m}(q)$ with $2m \ge 6$ and $q$ even, and let $M=\GO^{\pm}_{2m}(q).$
  Then $b(G, M\setminus G)= 2m$. 
\end{prop}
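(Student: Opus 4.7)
My plan is to exploit the isomorphism $\Sp_{2m}(q) \cong \GO_{2m+1}(q)$, valid in characteristic~$2$, so that the action of $G$ on the coset space $M \setminus G$ is equivalent to the natural action of $H := \GO_{2m+1}(q)$ on the orbit $\NN^{\pm}(H, 2m)$ of non-degenerate $2m$-dimensional subspaces of the natural module $V = \BBF_q^{2m+1}$, of sign matching the proposition. In characteristic~$2$ the bilinear form $B$ on $V$ has $1$-dimensional radical $\langle x\rangle$ with $Q(x) = 1$, and each non-degenerate $2m$-subspace $W$ complements $\langle x \rangle$ in $V$; restricting $Q$ to the unique representative in $W$ of each coset of $V' := V/\langle x\rangle$ yields a bijection between $\NN^{\pm}(H, 2m)$ and the set of quadratic forms on $V'$ of sign~$\pm$ polarising to the induced symplectic form $B'$. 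The upper and lower bounds will be treated separately.

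For the upper bound $b(G) \leq 2m$, I would verify that the sets $\BB$ in Table~\ref{tab:symplecticoonorthogonal}, both of size $2m$, are bases. Given $g \in H_{(\BB)}$, the condition $Q(xg) = Q(x) = 1$ forces $xg = \alpha x$ with $\alpha^2 = 1$, so $xg = x$ in characteristic~$2$. The subspace $T \in \BB$ then satisfies $T^g = T$, and $g$ preserves the decomposition $V = T \oplus \langle x \rangle$. I would iteratively apply Lemmas~\ref{lem:linone}(3) and~\ref{lem:orthogonal} to the remaining subspaces $U_i, V_j$ (for $\NN^+$), and additionally $W_1, W_2$ (for $\NN^-$, where $W_1, W_2$ encode the twisted first summand $B_1$), to pin down the action of $g$ on each hyperbolic pair $(e_i, f_i)$, ultimately forcing $g = I$. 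These verifications follow the template of Lemmas~\ref{lem:twosing}, \ref{lem:twonsing}, and \ref{lem:twonondeorth}, and are slightly simpler because the local modifications are small.

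The main work lies in the lower bound $b(G) \geq 2m$. I will show that for any $\BB = \{W_1, \ldots, W_{2m-1}\} \subseteq \NN^{\pm}(H, 2m)$, the pointwise stabiliser is non-trivial. Let $Q_1, \ldots, Q_{2m-1}$ be the corresponding quadratic forms on $V'$. Since each difference $Q_i - Q_1$ polarises to zero, it is a square $\phi_i^2$ of a unique linear functional $\phi_i$ on $V'$; via the non-degenerate pairing $B'$, each $\phi_i$ corresponds to a vector $v_i \in V'$ with $\phi_i(\cdot) = B'(\cdot, v_i)$. The key observation is that for $g \in \GO(V', Q_1) \leq \Sp(V')$, the condition $Q_i \circ g = Q_i$ is equivalent to $\phi_i(vg)^2 = \phi_i(v)^2$ for all $v$, which by injectivity of the Frobenius in characteristic~$2$ reduces to $\phi_i \circ g = \phi_i$, and thence, using that $g$ preserves $B'$, to $v_i g = v_i$. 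Consequently $H_{(\BB)}$ is identified with the pointwise stabiliser in $\GO(V', Q_1) \cong \GO^{\pm}_{2m}(q)$ of the $2m-2$ vectors $v_2, \ldots, v_{2m-1}$, whose span $U$ has dimension at most $2m-2$. Applying Lemma~\ref{lem:orth_sin_1_tight} to any $(2m-2)$-dimensional subspace of $V'$ containing $U$, and using that the only scalar in $\GO^{\pm}_{2m}(q)$ in characteristic~$2$ is the identity, produces a non-identity element of $\GO^{\pm}_{2m}(q)$ fixing $U$ pointwise, giving $H_{(\BB)} \neq 1$.

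The principal obstacle is the two-step reduction from stabilising the quadratic forms $Q_i$ to pointwise fixing vectors inside $\GO^{\pm}_{2m}(q)$: one must correctly invoke injectivity of Frobenius to pass from $\phi_i^2$ to $\phi_i$, and symplectic invariance to pass from $\phi_i$ to $v_i$. One must also confirm that the non-trivial element supplied by Lemma~\ref{lem:orth_sin_1_tight} fixes vectors (not merely $1$-subspaces) in $U$, which rests on the triviality of the centre of $\GO^{\pm}_{2m}(q)$ in characteristic~$2$. Should appealing directly to Lemma~\ref{lem:orth_sin_1_tight} prove awkward for particular configurations of $U$, I would unwind its case analysis (non-degenerate $U$; degenerate $U$ with $Q$ non-zero on its radical; degenerate $U$ with $Q$ zero on a $1$- or $2$-dimensional radical) to construct the required stabilising element explicitly.
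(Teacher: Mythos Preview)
Your proposal is correct and tracks the paper's argument closely. Both use the isomorphism $\Sp_{2m}(q)\cong\GO_{2m+1}(q)$ and verify the explicit bases of Table~\ref{tab:symplecticoonorthogonal} for the upper bound.

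For the lower bound the paper argues geometrically rather than via quadratic forms: fixing one hyperplane $T$, each further hyperplane $S_i$ meets $T$ in a $(2m{-}1)$-space whose radical in $T$ is some $\langle v_i\rangle$, and $T\cap S_i = T\cap v_i^\perp$; hence $H_{(\AA)}$ lies inside the stabiliser in $H_T\cong\GO^{\pm}_{2m}(q)$ of the $1$-spaces $\langle v_1\rangle,\ldots,\langle v_{2m-2}\rangle$, which is nontrivial by Lemma~\ref{lem:orth_sin_1_tight}. Your quadratic-form reformulation reaches the very same vectors (indeed $\ker\phi_i = T\cap S_i$, so your $v_i$ spans the radical the paper uses), but is sharper in one respect: you correctly identify $H_{T,S_i}$ with the \emph{vector} stabiliser $(H_T)_{v_i}$ rather than the $1$-space stabiliser. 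This refinement is harmless for the conclusion, since the nonscalar element constructed in the proof of Lemma~\ref{lem:orth_sin_1_tight} in fact acts as the identity on the containing $(2m{-}2)$-space (the only scalar compatible with the quadratic form in characteristic~$2$), and so fixes each $v_i$ as a vector; your justification via the triviality of the centre is slightly off, but the outcome is the same. The two lower-bound arguments are thus the same computation in different language.
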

\begin{proof}
We shall use the isomorphism $\Sp_{2m}(q) \cong \GO_{2m+1}(q)$ to
consider the equivalent actions of $H = \GO_{2m+1}(q)$ on
$\NN^{\pm}(H, 2m)$, where the natural module for $H$ is
$V = \BBF_q^{2m+1}$ with quadratic form
$Q$ as in Definition~\ref{def:standardform}. That is, we shall
consider the actions of $H$ on non-degenerate
$2m$-dimensional subspaces of $+$ and $-$ type, since the point
stabiliser of $H$ in these actions is $\GO^{\pm}_{2m}(q)$.

We shall first show that the set $\BB$ in
Table~\ref{tab:symplecticoonorthogonal}
is a base for $H$, and then show that $\BB$ is of minimal size. First
notice that $(e_i, f_i)$, $(e_i, e_i + f_i + x)$, and $(e_i + f_i + x, f_i)$ 
are hyperbolic pairs, therefore $A_i$,  $\langle e_i,
f_i + x \rangle$ and $\langle e_i + x, f_i \rangle$ are $2$-spaces of
$+$ type. The basis of $B_i$ is
an elliptic pair, so in each case $\BB \subseteq \Omega$.

 Let $g \in
H_{(\BB)}$. Then we shall show that $g = 1$. From
$Q(x) = 1$ and $\langle x \rangle = \rad(V) = V \cap V^\perp$, we deduce that $xg =
x$. 
We first consider $\NN^-(H, 2m)$. 
For $2 \le i \le m$, the element $g$ stabilises 
$$T \cap U_i= B_1 \oplus A_2\oplus
\dots  \oplus A_{i-1}\oplus \langle e_i \rangle \oplus  A_{i+1}\oplus
\dots  \oplus A_{m}, $$ and so stabilises
$\rad(T\cap U_i)=\langle e_i\rangle$. Hence there exists
$\alpha_i \in \BBF_q$ such that 
$e_ig=\alpha_i e_i$, for $2\le i\le m$. 
Similarly,  
 $\rad(T \cap V_i)^g = \rad(T \cap V_i)$ 
 so
$f_ig= \alpha_i^{-1} f_i,$ for $2\le i\le m-1$.
Since $m \ge 3$, 
the space
$S:= \langle A_2,  \dots, A_{m-1}\rangle$ is non-degenerate and 
stabilised by $g$, so $g$ also stabilises 
 $$S^\perp \cap W_1=
\langle A_1, 
A_m, x\rangle \cap W_1 = \langle A_1, A_m\rangle.$$
Hence $g$ stabilises $\langle A_1,  A_m\rangle \cap W_2 = \langle
e_1, A_m\rangle$, and so fixes the radical of this space, which is $\langle e_1\rangle$.
 Moreover, $g$ stabilises $\langle e_1, A_m \rangle \cap T = A_m = \langle e_m, f_m
 \rangle$. Hence, since $e_mg =
 \alpha_m e_m$,  
 Lemma~\ref{lem:orthogonal}(2) shows
that $f_mg=\alpha_m^{-1}
 f_m$. In addition, $g$ stabilises  $A_m^\perp \cap \langle A_1,
 A_m \rangle = A_1 
 $, and 
 Lemma~\ref{lem:orthogonal}(2) now yields 
 $f_1g=
 \alpha_1^{-1}f_1$. Next,  for $2 \le i \le m$ we deduce from $(f_i + x)g
 = \alpha_i^{-1} f_i + x \in U_i$ that $\alpha_i = 1$, and the same
 follows for $\alpha_1$ from $(f_1 +x)g \in W_2$.
 
The arguments for $\NN^+(H, 2m)$ are very similar but
easier. Consideration of $T \cap U_i$ shows that for all $i$ there exists
an $\alpha_i \in \BBF_q$ such that $e_i g= \alpha_i e_i$.
Then an identical argument applied to $T \cap V_j$ shows that $f_i g =
\alpha_i^{-1} f_i$ for $i \leq m-1$.  Therefore, $g$
stabilises $\langle
A_1,\dots,A_{m-1}\rangle^\perp\cap T = \langle e_m, f_m \rangle$, and so
$f_mg = \alpha_m^{-1}f_m$, also. 
Finally, notice that $(f_i+ x)g =
\alpha^{-1}_i f_i + x \in U_i$ for 
all $i$, and hence $\alpha_i =
1$, as required.

It remains only to show that these bases are of minimal size.
Let $\AA:=\{T, S_1,\dots ,S_{2m-2}\}$ be a set of $2m-1$
non-degenerate $2m$-spaces of $V$ of sign $\varepsilon$ (either $+$
or $-$). We shall show that $H_{(\AA)}\ne 1$.
The stabiliser in $H$ of $T$ is $H_T=\GO^\varepsilon_{2m}(q)$, which
acts naturally on $T$ as $\GO^\varepsilon_{2m}(q)$. It suffices to show
that the stabiliser in $H_T$ of all of the spaces $T \cap S_i$ for $1 \leq i \leq
2m-2$ is nontrivial.

Since $\dim(T\cap S_i) = 2m-1$, the restriction of $B$ to
$T \cap S_i$ is degenerate,  and so $T\cap S_i$
has a one-dimensional radical $\langle v_i\rangle$. Hence the
2-point stabiliser $H_{T, S_i}$ stabilises $\langle v_i \rangle \leq
T$. Furthermore, since $T$ is non-degenerate, it follows that
$\dim(v_i^\perp \cap T) = 2m-1$, and so $T \cap S_i = T
\cap v_i^\perp$. Hence $H_{T, S_i}$ is equal to $H_{T,\langle v_i\rangle}$ and so $H_{(\BB)} = \cap_{i = 1}^{2m-2} H_{T, S_i}$
contains $(H_T)_{\langle
v_1 \rangle, \ldots, \langle v_{2m-2} \rangle}$. This group is nontrivial by
Lemma~\ref{lem:orth_sin_1_tight}. 
 \end{proof}

\section{Proof of Theorem~\ref{thm:main} for almost simple
  groups}\label{sec:almost_simple}

In this section, we shall prove the following theorem, which in
particular implies Theorem~\ref{thm:main} for almost simple groups.

\begin{thm}\label{thm:almost_simple}
Let $G \le \Sym(\Omega)$ be a primitive almost simple group of degree $n$
that is not large base. If $b(G) > \lceil \log n \rceil + 1$, then
$G = \mathrm{M}_{24}$, $n = 24$ and $b(G) = 7$.  Furthermore, if $b(G) \geq
\log n + 1$ then $(G, n, b(G)) \in \{(\mathrm{M}_{12}, 12, 5),
(\mathrm{M}_{23}, 23, 6), (\mathrm{M}_{24}, 24, 7)\}$ or $G =
\PSp_{2m}(2)$ with $m \ge 3$,  $n = 2^{2m}
  - 2^{m - 1}$ and $b(G) =  2m = \lceil \log n \rceil + 1$.
\end{thm}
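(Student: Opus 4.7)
The plan is to enumerate all almost simple primitive $G$ of degree $n$ satisfying $b(G) \ge \log n + 1$ by partitioning the non-large-base almost simple primitive actions into three regimes, following Definition~\ref{def:standard}: non-standard actions; standard actions of $\alt(\ell)$ or $\sym(\ell)$ on partitions (the subset case being large base, hence excluded by hypothesis); and classical groups in subspace actions. I would treat each regime in turn.

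For non-standard actions, the resolution of the Cameron--Kantor conjecture by Burness and collaborators \cite{Burness07, BurnessOBrienWilson10, BGS11} gives $b(G) \le 7$, with equality if and only if $G = \mathrm{M}_{24}$ in its action of degree $24$. Hence $b(G) \ge \log n + 1$ forces $n < 2^7 = 128$, reducing the problem to a finite inspection of small-degree primitive almost simple groups. Consulting the tables in \cite{BurnessOBrienWilson10} and cross-checking with \MAGMAn{} would isolate $\mathrm{M}_{12}, \mathrm{M}_{23}, \mathrm{M}_{24}$ as the only exceptions for the weak inequality, with $\mathrm{M}_{24}$ the unique case of the strict inequality $b(G) > \lceil \log n\rceil + 1$. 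For $\alt(\ell)$ or $\sym(\ell)$ on an orbit of non-trivial partitions, the degree $n$ grows superexponentially in $\ell$ while $b(G)$ remains polylogarithmic in $\ell$, so a finite check for small $\ell$ combined with a crude asymptotic estimate handles this family.

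The bulk of the work is classical subspace actions, which I would analyse using Notation~\ref{not:SandN}, split by the dimension $k$ of the stabilised subspace. For $k \in \{1, 2\}$, Theorem~\ref{thm:base_summary} supplies explicit base bounds of order at most $d$ for $k = 1$ and $\lceil d/2 \rceil + 2$ for $k = 2$, while the degree of the action is at least roughly $q^{d-k}$ or $q^{k(d-k)}$. A direct arithmetic comparison then yields $b(G) \le \lceil \log n\rceil + 1$ except in the small-$q$, small-$d$ corner, where the explicit bases of Lemmas~\ref{lem:sing_one}--\ref{lem:twonondeorth} together with the sharpness statement in Lemma~\ref{lem:orth_sin_1_tight} allow a case-by-case verification. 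For $k \ge 3$, the degree grows fast enough in $d$ and $q$ that the Halasi--Liebeck--Mar\'oti bound \cite{HaLiMa} gives $b(G) \le 2\lfloor \log n\rfloor + 26$ with substantial slack. The exceptional symplectic-on-orthogonal action $\Sp_{2m}(q)$ on cosets of $\GO^\pm_{2m}(q)$ with $q$ even is handled by Proposition~\ref{prop:exactbasesize}, yielding $b(G) = 2m$; for $q = 2$ a direct degree computation gives $\lceil \log n\rceil = 2m - 1$, producing the infinite family of the theorem with $b(G) = \lceil \log n \rceil + 1$ precisely.

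The main obstacle, as I anticipate it, will be the boundary cases in low dimension and characteristic two, where $b(G)$ sits close to $\log n$ and the margin of error is thin. Careful verification will be needed for $\PSp_4(q)$, $\PGU_d(q)$ with $d \le 6$, and $\PGO^\varepsilon_d(q)$ with $d \le 8$, to ensure that no sporadic exceptions slip through. I would combine the explicit base constructions of Section~\ref{sec:onetwo} with \MAGMAn{} calculations on the finitely many small classical groups, and carefully compute the degrees $n$ for $\Sp_{2m}(2)$ acting on cosets of $\GO^\pm_{2m}(2)$ to confirm that this family exactly saturates $b(G) = \lceil \log n\rceil + 1$ without satisfying the strict inequality.
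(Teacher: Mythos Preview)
Your overall architecture matches the paper's: split into non-standard actions, partition actions, and subspace actions, and within subspace actions separate $k \le 2$ (handled by the explicit bases of Section~\ref{sec:onetwo}) from $k \ge 3$ (handled by Halasi--Liebeck--Mar\'oti). The non-standard and partition regimes are fine in outline, though the paper sharpens the non-standard case by using the type-specific bounds ($b \le 3$ for alternating socle, $b \le 4$ or $5$ for classical, $b \le 6$ for exceptional, and the explicit sporadic list) rather than the blanket $b \le 7$; this avoids a brute-force search through all primitive groups of degree below $128$.

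There is, however, a genuine gap in your treatment of $k \ge 3$. You invoke the Halasi--Liebeck--Mar\'oti bound in the form $b(G) \le 2\lfloor \log n\rfloor + 26$, but that inequality is \emph{weaker} than the target $b(G) < \log n + 1$ and can never establish it, however large $n$ becomes. What the paper actually uses from \cite{HaLiMa} is the bound recorded in Lemma~\ref{lem:HLMbounds}: $b(G_0, \mathcal{S}(G,k)) \le d/k + 10$ and $b(G_0, \mathcal{N}(G,k)) \le d/k + 11$ (with $d/k + 5$ in the linear case). One then compares $d/k + c$ against an explicit lower bound on $\log n$ extracted from the degree formulae in \cite[Table~4.1.2]{burngiu}, and this comparison is genuinely delicate for small $d$ and $q$ --- the paper resorts to \MAGMA\ for several boundary cases such as $(d,q,k) = (8,2,3)$ and $(7,3,3)$.

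Two further omissions would also need to be addressed. First, the bounds in Theorem~\ref{thm:base_summary} and Lemma~\ref{lem:HLMbounds} are for $\PGL$, $\PGU$, $\PSp$, $\PGO$ or for $G_0$, not for arbitrary almost simple $G$; you must bridge the gap via something like Lemma~\ref{lem:regularcycles}, which costs one extra base point per cyclic layer of $\Out(G_0)$ and has to be accounted for in every inequality. Second, subspace actions include the novelty maximal subgroups arising when $G \nleq \PGamL_d(q)$ (and the analogues for $\PSp_4(q)$ with $q$ even and $\POmega_8^+(q)$); these are not orbits on $\mathcal{S}(G,k)$ or $\mathcal{N}(G,k)$ and require a separate argument, supplied in the paper by Proposition~\ref{prop:nov}.
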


We shall first consider the standard actions of $\alt(\ell)$ and $\sym(\ell)$ on
partitions, then the actions of the classical groups on totally
singular and non-degenerate $k$-spaces, and (for
the orthogonal groups in even characteristic) non-singular
$1$-spaces. Then we shall look at the action of groups with
socle $\PSp_d(2^f)$ on the cosets of the normaliser of
$\GO^{\pm}_d(2^f)$, before considering the remaining subspace
actions. Finally, we will deal with the
non-standard actions, and hence prove
Theorem~\ref{thm:almost_simple}.

\subsection{Action on partitions}\label{sec:partition}
We first consider the non-large-base standard actions of
$\alt(\ell)$ and $\sym(\ell)$. 

\begin{thm}\label{thm:an_sn}
	Let $s \ge 2$ and $t \ge 2$, with $\ell:= st \ge 5$, and let
$G$ be $\sym(\ell)$. Let $\Omega$ be the set
of partitions of $\{1,2,\dots, \ell\}$ into $s$ subsets
of size $t$, and let $n=|\Omega|$. 
Then $b:=b(G, \Omega)< \log n + 1.$
\end{thm}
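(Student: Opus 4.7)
The plan is to bound $b(G,\Omega)$ from above by a logarithmic function of $\ell$, bound $\log n$ from below by a function growing linearly in $\ell = st$, and compare, handling small $\ell$ by direct inspection.

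For the lower bound on $n$, I would write $n = \frac{1}{s!}\prod_{k=1}^{s}\binom{kt}{t}$ and use the elementary estimate $\binom{kt}{t} \ge k^t$ to deduce $n \ge (s!)^{t-1}$, hence $\log n \ge (t-1)\log(s!)$. By the symmetric role of $(s,t)$ one also gets $\log n \ge (s-1)\log(t!)$. Taking the better of the two (or a Stirling-based refinement) gives a lower bound on $\log n$ that is linear in $\ell$ whenever both $s,t \ge 2$; in particular $\log n \to \infty$ as $\ell \to \infty$ at a rate at least $c\ell$ for some absolute $c > 0$.

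For the upper bound on $b$, I would exhibit an explicit base. Identifying $\{1,\dots,\ell\}$ with the $s\times t$ grid $\{1,\dots,s\} \times \{1,\dots,t\}$, take $P_1$ to be the ``row'' partition, whose stabiliser in $\sym(\ell)$ is the imprimitive wreath product $\sym(t)\wr \sym(s)$. Then add partitions $P_2, P_3, \dots$ chosen so as to successively kill (i) the top group $\sym(s)$ permuting the rows, and (ii) the base group $\sym(t)^s$ permuting entries within rows. A suitable ``shifted'' or ``column-coded'' family, where each additional partition separates at least a constant factor of the remaining stabiliser, yields $b \le \lceil \log_{\!s} \ell\rceil + c$ for some small absolute constant $c$. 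Since the stabiliser chain has length $O(\log_s \ell)$, this bound is the right order of magnitude.

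The comparison is then straightforward: $O(\log_s \ell)$ is eventually dominated by the linear lower bound on $\log n$, so $b < \log n + 1$ holds for all sufficiently large $\ell$. For the finitely many small cases—$\ell = 6,8,9,10,12,\dots$ up to the threshold at which the asymptotic argument takes over—I would verify the inequality directly, either by a small ad hoc construction of a base or by a short \MAGMA computation. The main obstacle is keeping the upper bound on $b$ tight enough and uniform across all $(s,t)$: the case $s=2$ is the most delicate, because the block-renaming freedom within each partition is then only $\mathbb{Z}/2$, forcing slightly more partitions to distinguish all points, while still requiring the construction to respect the block-size constraint $|B|=t$ at every step.
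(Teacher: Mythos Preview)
Your strategy---upper-bound $b$ by something of order $\log_s \ell$, lower-bound $\log n$ linearly in $\ell$, compare, and handle small $\ell$ directly---is exactly the paper's. The execution differs in two ways. First, the paper does not construct bases: it quotes $b \le \lceil \log_s t\rceil + 3$ from Benbenishty--Cohen--Niemeyer \cite{standard1} (together with the sharper $b=3$ for $t=2$ from \cite{BGS11} and $b\le 6$ for $s\ge t\ge 3$, again from \cite{standard1}), whereas your grid-and-shift construction would amount to reproving the main theorem of \cite{standard1}; that is feasible and is along the lines you sketch, but it is the substantive part and your sketch stops well short of a proof. Second, on the lower bound your approach is arguably cleaner than the paper's: from $\binom{kt}{t}\ge k^t$ you get $n \ge (s!)^{t-1}$, and then $s!\ge 2^{s-1}$ gives $\log n \ge (s-1)(t-1)\ge \ell/2-1$ uniformly in $(s,t)$, whereas the paper treats $s=2$, $s=3$, and $4\le s<t$ separately with ad hoc estimates.

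One correction: your claim that ``by the symmetric role of $(s,t)$ one also gets $\log n \ge (s-1)\log(t!)$'' is false, since $n=\ell!/\bigl((t!)^s s!\bigr)$ is not symmetric in $s$ and $t$; for instance $s=2$, $t=10$ gives $n=92378<10!$. Fortunately you never need this second bound, because the first one alone already yields the linear estimate $\log n\ge \ell/2-1$ noted above, which together with $b\le \log\ell+O(1)$ reduces the direct check to $\ell\le 13$ or so---consistent with the paper's verification of $s=2$, $t\in\{3,4,5\}$ in \MAGMAn.
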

\begin{proof} The degree $n$  is $\ell!/{(t!)}^ss!$. 
If $t = 2$, then $s \ge 3$ and $n \ge
  \frac{6!}{2^3 \cdot 3!} = 15$. However, $b = 3$  by \cite[Remark 1.6(ii)]{BGS11}, so $b < \log n$. 
If $s \ge t \ge 3$, then $n \ge \frac{9!}{(3!)^3 3!} =
280$, whilst $b \le 6$ by~\cite[Theorem 4(i)]{standard1}, so again $b <
\log n$.  

For the remaining cases, 
by~\cite[Theorem 4(ii)]{standard1}
\begin{equation}\label{eq:an_bound}
b\le\lceil\log_s t\rceil+3\le \log_s t+4 = \log_s (\ell/s)+4=\log_s\ell+3.
\end{equation}

Next consider $s=2$, so that $t \ge 3$. 
We check in \MAGMA \cite{Magma} that if $t = 3$, $4$, $5$ 
then $b$ is at most $4$, $5$, and $5$, respectively,
whilst $n = 10$, $35$, $126$, and so $b < \log n + 1$ in each
case. Assume therefore that $\ell \geq
12$. Then
\[
	n =\frac{\ell!}{{2((\ell/2)!)}^2}=\frac{\ell(\ell-1)\dots(\ell-\ell/2+1)}{2(\ell/2)!}\\
	 =\frac{\ell(\ell-1)\dots(\ell/2+2)(\ell/2+1)}{(\ell/2)(\ell/2-1)\dots
          2\cdot 2}>2^{\ell/2}.
\]
	In particular, since $\ell\ge 12$, we deduce from
        \eqref{eq:an_bound} that
\[
b\leq \log \ell + 3 < \frac{\ell}{2}+1 \le	\log n+1.
\]

Next, let $s = 3$. We may assume that $t > s$, so 
$\ell \geq 12$. Then, reasoning as for $s = 2$,
we deduce that $n \geq 2^{\ell/3} \cdot 3^{\ell/3} > 2^{2
  \ell/3}$. Hence $\log n > 2\ell/3$, so \eqref{eq:an_bound} yields
\[
b \leq \log \ell + 3 <\frac{2\ell}{3} + 1 < \log n + 1.
\]

We are therefore left with $4 \le s < t$, so that $\ell
\ge 20$.
For  all $\ell$, the groups $\alt(\ell)$ and $\sym(\ell)$ have no
core-free subgroups of index less than
$\ell$, 
so
$\ell < n$. From \eqref{eq:an_bound} we deduce that 
\[ b\le \log_s\ell + 3= \frac{\log \ell}{\log s}+3\le \frac{\log \ell}{2}+3  \leq \log \ell+1 < \log n+1.\]
\end{proof}

\subsection{Subspace actions }\label{sec:subspace}

We now prove Theorem~\ref{thm:almost_simple}
for the subspace actions of almost simple
groups.  First we record two lemmas concerning base size and automorphism groups.

\begin{lemma}\label{lem:regularcycles}
Let $G$ be a finite almost simple primitive permutation group on
$\Omega$ with socle $G_0$ a non-abelian simple classical group,
and let $G_0 \unlhd G_1 \unlhd G \le \Omega$. If $G/G_1$ has a subnormal series of length $s$ with all quotients cyclic, then $b(G)\le b(G_1)+s$.
\end{lemma}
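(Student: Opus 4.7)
The plan is to prove the lemma by induction on $s$. The case $s = 0$ is immediate since then $G = G_1$. For the inductive step, fix a subnormal series $G_1 = K_0 \unlhd K_1 \unlhd \cdots \unlhd K_s = G$ realising the hypothesis. Applying the inductive hypothesis to $K_{s-1}$ gives $b(K_{s-1}) \le b(G_1) + (s-1)$, so it is enough to establish the case $s = 1$, namely that if $H \unlhd G$ with $G/H$ cyclic and $G_0 \le H$, then $b(G) \le b(H) + 1$.

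In that case, let $\BB$ be a base for $H$ of minimum size. Since $H_{(\BB)} = 1$, the stabiliser $G_{(\BB)}$ meets $H$ trivially and therefore embeds into the cyclic quotient $G/H$; hence $G_{(\BB)} = \langle g \rangle$ is cyclic. If $g = 1$ there is nothing to do. Otherwise I would select $\omega \in \Omega$ lying in a \emph{regular cycle} of $g$ acting on $\Omega$, i.e.\ a cycle of length exactly $|g|$; any such $\omega$ satisfies $\langle g \rangle \cap G_\omega = 1$, so $G_{(\BB \cup \{\omega\})} = 1$ and $\BB \cup \{\omega\}$ is a base of size $b(H) + 1$, as required.

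The main obstacle is producing the regular cycle for $g$. The set of points not lying in any regular cycle of $g$ is $\bigcup_{p \mid |g|} \mathrm{Fix}(g^{|g|/p})$, where the union is over primes $p$ dividing $|g|$; it suffices to show this union is a proper subset of $\Omega$. Note that $g \in G_{(\BB)} \setminus \{1\}$ forces $\langle g \rangle \cap G_0 = 1$ (since $G_0 \le H$), so every non-trivial power $g^{|g|/p}$ lies in $G \setminus G_0$ and in particular acts non-trivially on the faithful set $\Omega$. For a primitive almost simple group with classical socle, strong upper bounds on the fixed-point ratios of outer elements are available from the literature on such actions. Combining these with the fact that $|g|$ divides $|G/G_1|$, which for a classical simple socle is of small and restricted shape (so $|g|$ has few prime divisors), forces the union above to be proper in $\Omega$, yielding the required regular cycle and closing the induction.
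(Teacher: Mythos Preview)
Your overall architecture---induct on $s$, reduce to the case of a single cyclic quotient, and then kill the residual cyclic stabiliser $G_{(\BB)}=\langle g\rangle$ by adjoining a point from a regular cycle of $g$---is exactly the line the paper takes. The difference lies in how the existence of that regular cycle is justified.

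The paper does not attempt to argue via fixed-point ratios. Instead it invokes \cite[Theorem~1.2]{GuestSpiga17}, which establishes outright that for a finite almost simple primitive group with classical socle, \emph{every} element of $G$ has a regular cycle on $\Omega$, except when $(G_0,\Omega)$ is $(\alt(\ell),\{1,\dots,\ell\})$ with $\ell\in\{5,6,8\}$. Those three exceptions are genuine (for instance, $(1\,2)(3\,4\,5)\in\sym(5)$ has order $6$ but no $6$-cycle), and the paper disposes of them separately by noting that then $G\le\sym(\ell)$ and $b(\sym(\ell))=b(\alt(\ell))+1$.

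Your paragraph sketching the regular-cycle argument is where the gap is. The statement that ``strong upper bounds on fixed-point ratios of outer elements'' together with ``$|g|$ has few prime divisors'' force $\bigcup_{p\mid |g|}\mathrm{Fix}(g^{|g|/p})\subsetneq\Omega$ is the \emph{shape} of the Guest--Spiga proof, but it is not a proof: the number of prime divisors of $|\Out(G_0)|$ is not uniformly bounded, the relevant fixed-point ratio bounds depend on the action and require case analysis, and---crucially---there are genuine exceptions that your sketch gives no mechanism to detect or handle. To close the argument you should either cite the Guest--Spiga theorem directly and treat the three exceptional natural alternating actions by hand (as the paper does), or supply the full fixed-point ratio analysis, which is substantial.
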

\begin{proof}
  If $(G_0,\Omega)$ is not isomorphic to
  $(\alt(\ell), \{1, \dots, \ell\})$ with $\ell\in \{5, 6,8\}$, then
by \cite[Theorem 1.2]{GuestSpiga17} each element of $G$ has a
regular cycle. It follows that stabilising one point for each cyclic
quotient suffices to extend a base for $G_1$ to one for $G$.

Let $(G_0,\Omega)$ be $(\alt(\ell), \{1, \dots, \ell\})$ for some
$\ell\in \{5, 6,8\}$. Then 
$G \le \sym(\ell)$, since the other automorphisms of
$\alt(6)$ do not act on $6$ points, and so 
the result follows from
$b(\sym(\ell), \Omega) = b(\alt(\ell), \Omega) + 1$.
\end{proof}

Recall the meaning of $\SS(G, k)$ and $\NN^{\epsilon}(G, k)$ from
Notation~\ref{not:SandN}. The following bounds are established in
\cite[Proof of Theorem 3.3]{HaLiMa}. 

\begin{lemma}\label{lem:HLMbounds}
   Let $G_0$ be a simple classical group.
  \begin{enumerate}
   \item[(i)] If $(G_0, k)
     \neq (\POmega^+_{2m}(q), m)$ then $b(G_0, \mathcal{S}(G, k)) \leq
     d/k + 10$. 
   \item[(ii)] $b(G_0, \mathcal{N}(G, k)) \leq d/k + 11$.
     \item[(iii)] If $G_0 = \PSL_d(q)$ then $b(G_0, \mathcal{S}(G, k))
       \leq d/k + 5$. 
   \end{enumerate}
 \end{lemma}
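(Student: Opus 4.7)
The plan is to extract these bounds directly from the proof of Theorem~3.3 in \cite{HaLiMa}; since the lemma is a repackaging of bounds already present in the literature, my task is to locate, for each of (i)--(iii), the place in that proof where the bound is established and verify that the additive constant claimed is the correct one. The underlying technique in all three parts is the same: construct a base explicitly by choosing a configuration of subspaces of the appropriate type whose pointwise stabiliser in $G_0$ is trivial, in the same spirit as the arguments in Section~\ref{sec:onetwo}.

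For part~(iii), where $G_0=\PSL_d(q)$ acts on $\mathcal{S}(G,k)$ (which is just the Grassmannian of $k$-subspaces of $V$), the natural construction is to take $\lceil d/k\rceil$ subspaces $U_1,\dots,U_{\lceil d/k\rceil}$ forming a direct-sum decomposition of $V$. The setwise stabiliser of this configuration lies in $\GL_k(q)^{\lceil d/k\rceil}$ extended by a block permutation, and a bounded number of additional $k$-subspaces chosen in general position (cf.\ Lemma~\ref{lem:linone}(2)) cuts this down to scalars; the overhead works out to at most $5$.

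For part~(i), the same decomposition strategy applies, but the $U_i$ must be totally singular with respect to the form, which restricts their pairwise intersections and enlarges the overhead to $10$. The excluded case $(G_0,k)=(\POmega^+_{2m}(q),m)$ must be treated separately because the maximal totally singular $m$-spaces fall into two $G_0$-orbits (the two rulings of the hyperbolic quadric), so the standard construction does not produce a base. For part~(ii), one decomposes $V$ (up to a small non-degenerate complement) as an orthogonal sum of non-degenerate $k$-spaces and then adjoins a small bounded number of further non-degenerate $k$-subspaces to kill the classical group acting on each factor, giving the overhead $11$.

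The main obstacle is simply bookkeeping: the bounds arise from tracking, line by line through \cite{HaLiMa}, how many extra subspaces beyond $\lceil d/k\rceil$ are needed in each case, and confirming that $5$, $10$ and $11$ are correct upper bounds uniformly across the families. Since no new construction is required beyond what Halasi, Liebeck and Mar\'oti already give, this reduces to a careful citation once the relevant passages have been identified.
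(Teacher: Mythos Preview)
Your proposal is correct and matches the paper's approach: the paper gives no proof at all beyond the sentence preceding the lemma, which simply states that the bounds are established in \cite[Proof of Theorem 3.3]{HaLiMa}. Your additional sketch of the underlying constructions (direct-sum decompositions plus a bounded number of extra subspaces) goes further than the paper does, but is consistent with the intended citation and correctly identifies the lemma as a repackaging of existing bounds rather than something requiring a new argument.
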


\begin{lemma} \label{lem:orth_small}
Let $G$ be almost simple with socle
$G_0$ one of $\POmega_4^-(q)$, $\POmega_5(q)$ or 
$\POmega^\pm_6(q)$. Assume that $G_0 \neq \POmega_4^-(q)$ for $q \le 3$.
Let $\Omega = \NN^\epsilon(G, 1)$, with $\epsilon = +, -$ or blank, and let $n = |\Omega|$. 
Then $b:= b(G) < \log n + 1$.
\end{lemma}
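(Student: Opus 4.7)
My plan is to split into three cases based on the socle $G_0$, and in each case combine a bound on $b(\PGO)$ from Lemma~\ref{lem:nondeg1orth} with Lemma~\ref{lem:regularcycles} to bound $b(G)$, then compare against $\log n$. First I would record the orbit sizes via standard counting of singular and non-singular vectors: for $\POmega_4^-(q)$ the orbit on non-degenerate $1$-spaces has size $q(q^2+1)/\gcd(2,q-1)$; for $\POmega_5(q)$ (with $q$ odd) each orbit has size approximately $q^4/2$ (with the two orbits of sizes $45$ and $36$ in the tightest case $q=3$); and for $\POmega_6^\pm(q)$ the orbit sizes are bounded below by $q^2(q^3\mp 1)/\gcd(2,q-1)$. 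Applying Lemma~\ref{lem:nondeg1orth} yields $b(\PGO_4^-(q))\le 3$, $b(\PGO_5(q))\le 5$, and $b(\PGO^\pm_6(q))\le 5$, which transfer to $b(G_0,\Omega)$ since $G_0\le \PGO$.

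To pass from $b(G_0)$ to $b(G)$, I would take $G_1 = G \cap \PGO$ and apply Lemma~\ref{lem:regularcycles} to the chain $G_0 \unlhd G_1 \unlhd G$. Using the exceptional isomorphisms $\POmega_4^-(q) \cong \PSL_2(q^2)$, $\POmega_5(q) \cong \PSp_4(q)$, $\POmega_6^+(q) \cong \PSL_4(q)$, $\POmega_6^-(q) \cong \PSU_4(q)$, the outer automorphism groups are cyclic or have $\Z$-rank at most $2$ (with the possible exception of a graph/field interaction in the $\PSL_4$ case where the rank is at most $3$). In each case this gives a subnormal series with cyclic quotients of short length $s$, so $b(G)\le b(G_0) + s$ with an explicit upper bound in the range $5$--$8$.

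Finally, I would verify $b(G) < \log n + 1$ in each family. For all but the very smallest admissible $q$, the polynomial growth of $n$ in $q$ (of degree $\ge 3$) against a constant bound on $b(G)$ makes the inequality immediate. The main obstacle is the tight small-$q$ boundary: specifically $\POmega_5(3)$, $\POmega_6^+(2,3)$, and $\POmega_6^-(2,3)$, where $\log n + 1$ lies between roughly $5.8$ and $8$ and the nominal bound may exceed this by one. For these cases I would appeal to exceptional isomorphisms where useful, e.g.\ $\POmega_6^+(2)\cong A_8$ acting on $\binom{[8]}{2}$ and $\POmega_6^-(2)\cong \PSU_4(2)\cong \PSp_4(3)$, combined with precise base-size results for alternating groups on subsets from \cite{standard1} and for classical groups in low dimensions. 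The hardest case is likely to be $\POmega_6^-(3)\cong \PSU_4(3)$ acting on $126$ points, where $\out = D_8$ makes the cyclic-chain bound weakest, so I would resort to a direct \MAGMA\ computation to confirm $b(G) \le 7$ across all subgroups of $\aut(\PSU_4(3))$ containing the socle.
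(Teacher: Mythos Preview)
Your overall strategy coincides with the paper's: bound $b(\PGO_d^\varepsilon(q),\Omega)$ via Lemma~\ref{lem:nondeg1orth}, extend to $b(G)$ via Lemma~\ref{lem:regularcycles}, and compare with the orbit size. The difference is in the extension step. Although you correctly set $G_1=G\cap\PGO$, you then estimate the cyclic-series length of $G/G_1$ by that of the full group $\Out(G_0)$. The paper instead uses that $G/G_1$ embeds in the genuinely smaller quotient $\Aut(G_0)/\PGO_d^\varepsilon(q)$: the diagonal automorphisms (and, in the $d=6$ cases, the graph involution of $\PSL_4$ or $\PSU_4$) already lie inside $\PGO$, so what remains is essentially just field automorphisms. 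This gives series length at most $2$ for $d=4$, length $1$ (cyclic) for $d=5$, and length at most $2$ for $d=6$ (indeed length $0$ when $q=2$, since $\PGO_6^\pm(2)=\Aut(G_0)$). Hence $b\le 5,\ 6,\ 7$ in the three cases (and $b\le 5$ for $d=6$, $q=2$), and the degree bounds $n=q(q^2+1)/(2,q-1)$, $n\ge q^2(q^2-1)/2$, and $n\ge q^2(q^3+1)/(2,q-1)$ from \cite{burngiu} close every case directly, with no exceptional isomorphisms and no \MAGMA. The slack you lose by working with $\Out(G_0)$---for example series length $3$ for $\Out(\PSU_4(3))\cong D_8$, giving only $b\le 8$ against $\log 126+1<8$---is exactly what forces your small-$q$ patching; it evaporates once you quotient by $\PGO$ first. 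One minor correction: \cite{standard1} treats partitions rather than subsets, so it would not give the base size of $A_8$ on $2$-sets.
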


\begin{proof}
First let $d = 4$, so that $q \ge 4$. Then  $\Aut(G_0)/\PGO_4^-(q)$
has a normal series of length at most $2$ with all quotients cyclic,
so Lemmas~\ref{lem:nondeg1orth}
and \ref{lem:regularcycles} imply
that $b \leq 5$. 
From \cite[Table 4.1.2]{burngiu} we see that 			
$n = \frac{q (q^2+1)}{(q-1, 2)} > 2^6$, so the result follows. 

Next let $d = 5$, so that $q$ is odd. 
Then $\Aut(G_0)/\PGO_5(q)$ is cyclic, so     
Lemmas~\ref{lem:nondeg1orth} and \ref{lem:regularcycles} yield
$b \leq 6$, whilst from \cite[Table 4.1.2]{burngiu} we see that
$n= \frac{q^2(q^{4}-1)}{2(q^{2}\mp1)} \ge 36 > 2^5,$
so the result follows.

Finally, let $d = 6$.
From Lemmas~\ref{lem:nondeg1orth} and \ref{lem:regularcycles} we
deduce that  $b \leq 5$ if $q = 2$, and $b \le 7$ otherwise. 		
Moreover, by \cite[Table 4.1.2]{burngiu}, 
\[
   n \ge \frac{q^{2}(q^3 +  1)}{(q-1, 2)} > \left\{
    \begin{array}{ll}
      2^5 & \mbox{ if $q = 2$}\\
      2^6 & \mbox{ if $q \ge 3$}
     \end{array} \right.
\] as required. 
\end{proof}

\begin{prop}\label{prop:ortheven}
  Let $d \ge 8$ be even, and let
  $G$ be almost simple  with socle $G_0 =
\POmega^{\varepsilon}_d(q)$. Let $\Omega$ be $\SS(G, k)$ or $\NN^\epsilon(G,
k)$, where $\epsilon = +$, $-$ or blank,  and let $n
= |\Omega|$.  If $G$ acts primitively on $\Omega$ then $b:= b(G) < \log n + 1$.
\end{prop}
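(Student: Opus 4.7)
The plan is to combine upper bounds on $b(G)$ coming from the explicit bases of Section~\ref{sec:onetwo} and from Lemma~\ref{lem:HLMbounds} with standard lower bounds on $n = |\Omega|$ from the counting of orbits of subspaces in classical geometries, and then verify the inequality $b(G) < \log n + 1$ case by case.

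First I would split according to $k$. For $k = 1$, Lemmas~\ref{lem:sing_one} and~\ref{lem:nondeg1orth} give $b(\PGO^\varepsilon_d(q)) \le d-1$ for $\Omega = \SS(G,1)$ or $\Omega = \NN^\epsilon(G,1)$, and in even characteristic a non-singular $1$-space analysis (analogous to that for $\PGU_d(q)$ with $q = 2$ in Lemma~\ref{lem:nondeguni1}) gives the same bound. For $k = 2$, Lemmas~\ref{lem:twosing}, \ref{lem:twonsing}, and~\ref{lem:twonondeorth} give $b(\PGO^\varepsilon_d(q)) \le \lceil d/2 \rceil$. For $k \ge 3$, Lemma~\ref{lem:HLMbounds} gives $b(G_0) \le d/k + 11 \le d/3 + 11$. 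In every case, the bound is at most $d + O(1)$.

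Next I would pass from $\PGO^\varepsilon_d(q)$ (or $G_0$) to the full almost simple group $G$ using Lemma~\ref{lem:regularcycles}. Writing $q = p^f$, the quotient $\Aut(G_0)/\PGO^\varepsilon_d(q)$ admits a subnormal series with cyclic factors of total length $s \le \lceil \log_2 f \rceil + c$, where the small constant $c$ absorbs the graph automorphism (and triality for $\POmega^+_8(q)$). Hence $b(G) \le b(\PGO^\varepsilon_d(q)) + s$, and so in all cases
\begin{equation*}
b(G) \;\le\; d + \lceil \log_2 f \rceil + O(1).
\end{equation*}
For the lower bound on $n$, I would use the standard formulas for Witt's theorem, e.g.\ from~\cite[Table 4.1.2]{burngiu} or~\cite[Prop.~4.1.7]{liekle}. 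These give $n \ge c_1 q^{k(d-2k)}$ for $\SS(G,k)$ (away from the exceptional orbit when $k = d/2$ and $\varepsilon = +$) and $n \ge c_2 q^{k(d-k)}$ for $\NN^\epsilon(G,k)$, so
\begin{equation*}
\log n \;\ge\; k(d-2k)\log q - O(1) \quad\text{or}\quad \log n \;\ge\; k(d-k)\log q - O(1).
\end{equation*}

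Finally, I would verify $b(G) < \log n + 1$. For $k \ge 2$ and $d \ge 8$ we have $k(d-2k) \ge d - 2$, so $\log n \ge (d-2)\log q - O(1)$, which already dominates $d + \lceil \log_2 f \rceil + O(1)$ for $q \ge 3$, or for $q = 2$ once $d$ is above a small threshold. The case $k = 1$ is similar using the $q^{d-2}$-order lower bound on $n$. Any remaining boundary cases — $d \in \{8, 10\}$ with $q \in \{2, 3\}$, the exceptional orbit $(G_0, k) = (\POmega^+_{2m}(q), m)$ excluded from Lemma~\ref{lem:HLMbounds}(i), and $\POmega^+_8(q)$ with triality — are handled either by sharper orbit-size estimates or by direct computation in {\sc Magma}, in the spirit of Theorem~\ref{thm:an_sn}.

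The main obstacle is the case $q = 2$, where $\log q = 1$ makes the inequality tight and every additive constant matters; here one must use the sharper $k=1, 2$ bounds from Section~\ref{sec:onetwo} rather than the generic HLM estimate, and carefully track the length of the outer automorphism series. The exceptional $(\POmega^+_{2m}(q), m)$ case requires a separate argument, since the two orbits of maximal totally singular subspaces have independent stabilisers; but for $d \ge 8$ these orbits still have size $\ge q^{m(m-1)/2}$, which is large enough to give the required logarithmic slack.
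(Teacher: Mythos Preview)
Your overall plan matches the paper's: split on $k$, use the explicit bases of Section~\ref{sec:onetwo} for $k \le 2$, Lemma~\ref{lem:HLMbounds} for $k \ge 3$, pass from $G_0$ or $\PGO^\varepsilon_d(q)$ to $G$ via Lemma~\ref{lem:regularcycles}, and compare against orbit-size lower bounds from \cite[Table~4.1.2]{burngiu}. Two points need correction.

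First, your estimate $s \le \lceil \log_2 f\rceil + c$ for the length of a subnormal series with cyclic quotients for $\Aut(G_0)/\PGO^\varepsilon_d(q)$ reflects a misconception: a cyclic group of order $f$ contributes length $1$ to such a series, not $\lceil\log_2 f\rceil$. In fact $\Out(G_0)$ admits such a series of length at most $3$ (with a small adjustment for triality when $G_0 = \POmega_8^+(q)$), so the paper works throughout with $b(G) \le b(G_0) + 3$ and $b(G) \le b(\PGO^\varepsilon_d(q)) + 2$. Your bound is still a valid upper bound, so this is not fatal, but it makes the small-$q$ endgame needlessly hard; if you actually ``carefully track the length'' as you propose, you will find it is an absolute constant.

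Second, there is a genuine gap at the exceptional case $\Omega = \SS(G, d/2)$ with $\varepsilon = +$. You correctly note that Lemma~\ref{lem:HLMbounds}(i) excludes it and that the orbit is large (in fact $n = \prod_{i=1}^{d/2}(q^i+1) \ge q^{d(d+2)/8}$), but a large $n$ by itself proves nothing: you still need an upper bound on $b(G)$, and none of the tools you cite supplies one for this action. The paper closes the gap by invoking a further result from \cite{HaLiMa} giving $b(G_0) \le 9$ here, whence $b(G) \le 12$ (and $b(G) \le 10$ for $q = 2$), which combined with $n \ge q^{10}$ suffices. Your remark about the two orbits having ``independent stabilisers'' is beside the point; the issue is simply that no base-size bound has been established.
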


\begin{proof}
We shall use throughout the proof the fact that if $G_0 \neq
\POmega_8^+(q)$, or if $\Omega \neq \SS(G, 2)$ (see, for
  example, \cite[Table 8.50]{Cbook}), 
then $G/G_0$ has a normal series with all
quotients cyclic of length at
most three, and less if $q = 2$ or $q$ is prime, so
that $b(G) \le b(G_0) + 3$, by
Lemma~\ref{lem:regularcycles}. Furthermore, under the same
conditions, $b(G) \leq b(\PGO_d^\varepsilon(q)) + 2$, by the
same lemma. If $G_0 = \POmega_8^+(p^f)$ and $p$ is odd then
$\Out(G_0) \cong \Sym(4) \times C_f$, whilst if $p = 2$ then
$\Out(G_0) \cong \Sym(3) \times C_f$. 

  First consider 
  $\Omega = \SS(G, k)$. By  \cite[Tables 3.5E and F]{liekle} we may
assume that $1 \le k \le d/2$, and $k \le d/2-1$ if $\varepsilon =
-$. 
If $k \le d/2-1$ then  by \cite[Table 4.1.2, Cases VI and VII]{burngiu}
\begin{align}
 \label{eq:orth_sing_kspace} n	& =\frac{ (q^{\frac{d}{2}}\mp1) (q^{\frac{d}{2}-k} \pm 1)
   \prod_{i=\frac{d}{2}-k+1}^{\frac{d}{2}-1}(q^{2i}-1)}{\prod_{i=1}^{k}(q^i-1)}\\
   & \ge
   \frac{(q^{\frac{d}{2}}\mp1 )(q^{\frac{d}{2} -1} \pm 1)}{q-1}\prod_{i=2}^{k}(q^{i}+1)
                   > 
                   q^{d-k-1}\prod_{i=2}^{k}q^{i}
               \label{eq:big_k}      \ge q^{d-2+\frac{k(k-1)}{2}}.
\end{align}
If 
  $k = 1$ then Lemma~\ref{lem:sing_one} shows
  that $b \le d+1$, with tighter bounds when $q \le 3$, whilst
  \eqref{eq:big_k} gives $n > q^{d-2}$, so $\log
n  + 1> (d-2) \log q + 1 \ge b$.  
Similarly, if $k = 2$ 
then we deduce from Lemma~\ref{lem:twosing} 
that $b \leq 
d/2 + 3$ if $q$ is even or prime, and $b \le d/2 + 5$ in general.  
From (\ref{eq:big_k}), we see that $\log n+1 > (d-1) \log q + 1$, so
the result follows. 

Next we consider the case $3 \le k \le d/2-1$, so that
$b(G_0, \Omega) \leq d/k + 10$ by Lemma~\ref{lem:HLMbounds}(i). 
First assume that $q \le 3$ and $k= 3$. We calculate in \MAGMA that if $(d, q) = (8,
2)$ then $b \le 4$. For $(d, q) = (8, 3)$ we use
the exact value of $n$ and the fact that $b(G) \le b(G_0) + 2$ 
to see that $\log n + 1 >  15 \ge
b$. For $d \ge 10$ we
see from \eqref{eq:orth_sing_kspace} that
\[
n  
 \ge  \frac{
      (q^\frac{d}{2}\mp1)(q^{\frac{d}{2}-3}\pm1)}{q-1}(q^4+1)(q^{2}+1)(q^3+1)>
      q^{d-4}q^{4+3+2}\ge q^{d+5}.
\]
Hence if $q = 2$ then $\log n+1 \ge d+6\ge \frac{d}{3}+11 \ge b$,
and if $q = 3$ then $\log n+1 \ge \frac{3}{2}(d+5)+1\ge \frac{d}{3}+13
\ge b$.
In the remaining cases $k \ge 4$ or $q \ge 4$, so 
the result follows by a routine calculation from (\ref{eq:big_k}).

Finally consider $k = d/2$, so that $\varepsilon = +$. From \cite[Table 4.1.2]{burngiu},
$ n = 	\prod_{i=1}^{\frac{d}{2}}(q^i+1)\ge
        \prod_{i=1}^{\frac{d}{2}}q^i=q^{\frac{d(d+2)}{8}} \ge q^{10}.$
It is shown in \cite{HaLiMa}
        that $b(G_0) \leq 9$, so $b \leq 10$ when $q = 2$, and $b  \leq 12$
        otherwise, and the result follows. 

\medskip

We now consider  $\Omega = \NN^{\epsilon}(G, k)$, with $\epsilon \in
\{+, -\}$ or blank.  The stabiliser of an element of $\Omega$ also
stabilises a non-degenerate $d-k$ space, of the
opposite sign if  $\varepsilon = -$ and $k$ is even, and of the same sign
otherwise. Thus by considering the stabiliser of spaces of 
type $+$,  $-$  and $\circ$,  we may assume that $k \leq d/2$. 

First assume that $k$ is even, so that $2
\le k \le  d/2$, and if $k = d/2$ then $\varepsilon = -$, by our
assumption that $G$ acts primitively. Then we
deduce from \cite[Table 4.1.2, Cases X, XI, XIII]{burngiu} (by replacing $d$ by $d-k$ if $\epsilon = +$ and $\varepsilon =
-$) that
\begin{align}\label{eq:orth_kspacenondeg}
  n 
  =&\frac{q^{\frac{k(d-k)}{2}}(q^{\frac{d}{2}} - \varepsilon)\prod_{i=\frac{d-k}{2}}^{\frac{d}{2}-1}(q^{2i}-1)}{2(q^{\frac{k}{2}}
     - \epsilon)(q^{\frac{d-k}{2}}- \varepsilon
     \epsilon)\prod_{i=1}^{\frac{k }{2}-1}(q^{2i}-1)}
     = \frac{q^{\frac{k(d-k)}{2}}(q^{\frac{d}{2}} -
     \varepsilon)(q^{d-2} - 1)}{2(q^{\frac{k}{2}}
     - \epsilon)(q^{\frac{d-k}{2}}- \varepsilon
     \epsilon)} \prod_{i = 1}^{\frac{k}{2}-1}\frac{q^{d-k-2+2i} -
     1}{q^{2i} - 1}
\end{align}
If $k = 2$ then it follows  that  
$n  >  q^{2d-6} \ge q^{d+2}$, whilst from
Lemmas~\ref{lem:twonsing} and \ref{lem:twonondeorth} we see that
$b \le d/2 + 2 < \log n + 1$. 
For $k \ge 4$, notice that
\begin{align*}
 n
\nonumber & > \frac{q^{\frac{k(d-k)}{2}}(q^{\frac{d}{2}}- \varepsilon)(q^{d-2} -
                   1)}{4(q^{\frac{d}{2}} - 1)} \prod_{i =
                   1}^{\frac{k}{2} -1} q^{d-k-2}
\geq \frac{1}{4}q^{\frac{k(d-k)}{2} + {d-3} + 
                                                     (d-k-2)(\frac{k}{2}-1)}
 = \frac{1}{4}q^{kd - k^2 - 1}.    \end{align*}
 The quadratic $kd - k^2 - 1$ attains its minimum for $4 \le k \le
 d/2$ at $k = 4$, so $\log n + 1 > (4d - 17) \log q - 1\ge 4d-18.$
Then by Lemma~\ref{lem:HLMbounds}(ii), 
$b \leq \frac{d}{4} + 14.$ If $d\ge 10$, then $\log n + 1 \ge 4d-18\ge
\frac{d}{4} + 14$, so it
only remains to consider $(d,k,\varepsilon)= (8,4,-)$. In this case, 
$(4d - 17) \log q - 1\ge 15\log q+1$ and the result follows easily for $q\ge 3.$
If $q=2$ then $\Out(G_0)$ is cyclic, hence by Lemmas~\ref{lem:regularcycles} and \ref{lem:HLMbounds}(ii),  $b \leq 14$ and the result follows.

Now let $k$ be odd,  so without loss of generality $1\le k<d/2$.  
By \cite[Table 4.1.2, Cases IX, XII, XIV]{burngiu}
\begin{align*}
n=&\frac{q^{\frac{(kd-k^2-1)}{2}}(q^{\frac{d}{2}}- \varepsilon
    )\prod_{i=\frac{d-k+1}{2}}^{\frac{d}{2}-1}(q^{2i}-1)}{(2,
    q-1)\prod_{i=1}^{\frac{k-1 }{2}}(q^{2i}-1)}. 
\end{align*}
If $k=1$ then $n> \frac{q^{d-2}}{(2, q-1)}$, whilst 
Lemma~\ref{lem:nondeg1orth} shows that
$b\le d+1$, with tighter bounds when $q \le 3$, so the result follows easily.
If $ k \ge 3$ then $q$ is odd,  with $b \le d/3 + 14$ by Lemma~\ref{lem:HLMbounds}(ii). 
Now
\begin{align*}
  n& = \frac{1}{2}q^{\frac{(kd-k^2-1)}{2}}(q^{\frac{d}{2}}- \varepsilon)
     \prod_{i = 1}^{\frac{k-1}{2}}\frac{q^{d-k-1+2i} - 1}{q^{2i}-1} >
  \frac{1}{2}q^{\frac{(kd-k^2-1)}{2} + \frac{d}{2}-1} \prod_{i
     = 1}^{\frac{k-1}{2}} q^{d-k-1} \\
& \quad \quad \quad  \quad \quad \quad \quad = \frac{1}{2} q^{\frac{kd-k^2- d - 3}{2} 
     + \frac{k-1}{2}(d-k-1) } = \frac{1}{2}q^{kd - k^2-1}\ge \frac{1}{2}q^{3d - 9-1},
 \end{align*}    
 where the last inequality follows as in the case $k$ even, so the
 proof is complete.
\end{proof}

\begin{prop}\label{prop:orthodd}
Let $d\ge 7$ and let $G$ be almost simple with socle $G_0 =
\POmega^{\circ}_d(q)$. Let $\Omega$ be $\SS(G, k)$ or $\NN^\pm(G,
k)$,  and let $n
= |\Omega|$. If $G$ acts primitively on $\Omega$ then $b:= b(G) < \log n + 1$.
\end{prop}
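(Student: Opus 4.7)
The proof follows the template of Proposition~\ref{prop:ortheven}, adapted to odd dimension. Since $\POmega^\circ_d(2^e) \cong \PSp_{d-1}(2^e)$ via the standard isomorphism in characteristic two, I may assume that $q$ is odd. Then $\Out(G_0)$ has order $2e$ (where $q = p^e$) and admits a normal series of length at most two with cyclic quotients arising from diagonal and field automorphisms, so Lemma~\ref{lem:regularcycles} gives $b(G) \le b(G_0) + 2$, and similarly $b(G) \le b(\PGO^\circ_d(q)) + 1$.

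For the singular case $\Omega = \SS(G, k)$, by \cite[Table 3.5E]{liekle} we may assume $1 \le k \le (d-1)/2$, and the degree $n$ is read from \cite[Table 4.1.2]{burngiu}. A lower bound of the shape $n > q^{\psi(d,k)}/C$, with $\psi(d,k)$ growing quickly in both $d$ and $k$, is obtained exactly as in \eqref{eq:orth_sing_kspace}--\eqref{eq:big_k}. This is combined with the bound $b(G_0) \le d-1$ from Lemma~\ref{lem:sing_one} when $k = 1$, the bound $b(G_0) \le \lceil d/2 \rceil$ from Lemma~\ref{lem:twosing} when $k = 2$, and the bound $b(G_0) \le d/k + 10$ from Lemma~\ref{lem:HLMbounds}(i) when $k \ge 3$, to deduce $b(G) < \log n + 1$ directly.

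For $\Omega = \NN^\pm(G, k)$, since the $G$-stabiliser of a non-degenerate $k$-space coincides with that of its orthogonal complement, I may assume $k \le (d-1)/2$. When $k$ is even the complement has odd dimension (so is of type $\circ$), and the sign $\pm$ labels the restriction of $Q$ to the $k$-space; when $k$ is odd the complement has even dimension and its sign labels the orbit. In each subcase, \cite[Table 4.1.2]{burngiu} gives a closed form for $n$, which is bounded below exactly as in \eqref{eq:orth_kspacenondeg}. Combining these estimates with the explicit upper bounds from Lemma~\ref{lem:nondeg1orth} ($k = 1$), Lemmas~\ref{lem:twonsing} and \ref{lem:twonondeorth} ($k = 2$), and Lemma~\ref{lem:HLMbounds}(ii) ($k \ge 3$) yields $b(G) < \log n + 1$.

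The main technical hurdle will be the same as in the even-dimensional case: the small parameter triples $(d, k, q)$ with $d$ and $q$ both small and $k$ at the threshold where the logarithmic inequality is tight, notably $d = 7$ with $k = 3$ and $q = 3$. In such situations I would either sharpen the argument by substituting the exact value of $n$ from \cite[Table 4.1.2]{burngiu} and using $b(G) \le b(\PGO^\circ_d(q)) + 1$ instead of the generic Lemma~\ref{lem:regularcycles} bound, or, as a fallback, carry out a direct \MAGMA verification along the lines of the $(d, q) = (8, 2)$ check in Proposition~\ref{prop:ortheven}.
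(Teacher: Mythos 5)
Your proposal is correct and follows essentially the same strategy as the paper: reduce to $q$ odd, use Lemma~\ref{lem:regularcycles} to pass between $G$, $\PGO^{\circ}_d(q)$, and $G_0$, bound $n$ from below via the degree formulae in \cite[Table 4.1.2]{burngiu}, and combine with the explicit base bounds from Lemmas~\ref{lem:sing_one}, \ref{lem:twosing}, \ref{lem:nondeg1orth}, \ref{lem:twonsing}, \ref{lem:twonondeorth} and \ref{lem:HLMbounds}, with \MAGMA for the small $(d,q)$ exceptions. The only cosmetic difference is in the non-degenerate case, where the paper normalises by taking $k$ even (so $k$ ranges up to $d-1$) rather than $k\le(d-1)/2$; and you have correctly anticipated that the threshold case $k=(d-1)/2$, $d=7$, $q\in\{3,5\}$ requires a direct check, which is exactly what the paper does.
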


\begin{proof}
We shall use throughout the proof the fact that $\Out(G_0)$ has a
normal series with at most two cyclic quotients, and
$\Aut(G_0)/\PGO_d(q)$ is cyclic, so $b \le b(G_0, \Omega)
+ 2$ and $b \le b(\PGO_d(q), \Omega) + 1$, by
Lemma~\ref{lem:regularcycles}. 
  
First let $\Omega = \SS(G, k)$. Then $1 \le k \le (d-1)/2$ and by
\cite[Table 4.1.2, Case VII]{burngiu} 
\begin{align}\label{eq:deg_orth_odd_sing}
n&	=\frac{\prod_{i=\frac{d-2k+1}{2}}^{\frac{d-1}{2}}(q^{2i}-1)}{\prod_{i=1}^{k}(q^i-1)}.
\end{align}	
If $ k \le (d-3)/2,$ then
\begin{align*} 
n & \ge \frac{(q^{d-1}-1)}{(q-1)}\cdot\frac{(q^{2k}-1)\dots(q^{4}-1)}{(q^k-1)\dots (q^2-1)}
	\ge q^{d-2}q^{\sum_{i=2}^{k}i}=q^{d-3+\frac{k(k+1)}{2}}.
\end{align*}
If $k = 1$ then $n > q^{d-2} \ge 3^{d-2}$, and
from Lemma~\ref{lem:sing_one} we
deduce that 
$b\le d+1$, as required. 
If $k = 2$ then $n\ge q^d$,  whilst
Lemmas~\ref{lem:twosing} give $b\le \lceil d/2 \rceil + 1$. 
If $3 \le k \le (d-3)/2$ then $d\ge 9$ and $n \geq q^{d+3}$. 
Hence $
	\log n+1\ge (d+3)\log q+1\ge 3d/2+ 11/2\ge d/3+12 \ge
        b$, by Lemma
        \ref{lem:HLMbounds}(i). 
        
	Finally, assume that $k=(d-1)/2$, so that
        \eqref{eq:deg_orth_odd_sing} simplifies to
        $
	n
	=\prod_{i=1}^{\frac{d-1}{2}}(q^i+1)\ge
          q^{\frac{d-1}{4}(\frac{d+1}{2})} = q^{(d^2-1)/8}$. 
For $(d, q) \in \{(7, 3), (7,5)\}$ the result follows from a \MAGMA
calculation. Otherwise, by 
Lemma \ref{lem:HLMbounds}(i), 
	$b\le \frac{d}{(d-1)/2}+10 + 2 <  3 +12=15,$
so we are done.

Now let $\Omega = \NN^{\pm}(G, k)$, so that without loss of generality
$k$ is even. Then by \cite[Table 4.1.2, Cases XV and XVI]{burngiu}
\begin{align*}
n  &=\frac{q^{\frac{k(d-k)}{2}}\prod_{i=\frac{d-k+1}{2}}^{\frac{d-1}{2}}(q^{2i}-1)}{2(q^{\frac{k}{2}}\mp1)
    \prod_{i=1}^{\frac{k}{2}-1}(q^{2i}-1)} 
   = \frac{q^{\frac{k(d-k)}{2} } (q^{d-1} - 1)}{2 (q^{\frac{k}{2}} \mp
    1)}  \prod_{i = 1}^{\frac{k}{2}-1}
    \frac{q^{d-k-1+2i} - 1}{q^{2i}-1}\\
  & \geq \frac{1}{4} q^{\frac{k(d-k)}{2} + (d-1-\frac{k}{2}) + (d-k-1)(\frac{k}{2}  - 1)} = \frac{1}{4} q^{kd-k^2}
\end{align*}
If $k=d-1$ then $n \ge \frac{1}{4}q^{d-1} \ge
\frac{1}{4}3^{d-1}$, whilst $b = b(G, \NN^{\pm}(G, 1))  \leq d < \log
n + 1$, by
Lemma~\ref{lem:nondeg1orth}.
Similarly, if $k= 2$ then $n \ge \frac{1}{4}q^{2d-4}>q^{d}$, whilst 
$b \leq \lceil d/2 \rceil+1 < \log n + 1$ by
Lemmas~\ref{lem:twonsing} and \ref{lem:twonondeorth}. 
For $4 \le k \le d-3$, the quadratic $- k^2 +kd$ attains its minimum
at $k = d-3$, so $\log n + 1 \ge (3d-9) \log q - 1.$
Now,
Lemma~\ref{lem:HLMbounds}(ii) yields $b \le d/4 + 13$,
which is less than $(3d+13)/2\le d\log q+(2d-9)\log q-1= \log n+1$,
so the proof is complete.  
\end{proof}

\begin{prop}\label{prop:sp}
  Let $d \ge 4$, and let $G$ be almost simple with socle
  $G_0 = \PSp_d(q)$, with $(d, q) \neq (4, 2)$. 
Let $\Omega$ be $\SS(G, k)$ or $\NN(G, k)$, and let $n = |\Omega|$.
If $G$ acts primitively on $\Omega$, then $b:= b(G) < \log n + 1$.
\end{prop}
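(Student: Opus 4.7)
The plan is to mirror the strategy of Propositions~\ref{prop:ortheven} and~\ref{prop:orthodd}. I would first record the structural fact that, outside the exceptional situation $(d,q) = (4,2^f)$ with $f \ge 1$, the group $\Out(\PSp_d(q))$ has a normal series of length at most two with cyclic quotients (the diagonal of order $\gcd(2,q-1)$ and the field piece of order $f$, where $q=p^f$). Hence Lemma~\ref{lem:regularcycles} yields $b(G) \le b(\PSp_d(q),\Omega)+2$, with the sharper $b(G) \le b(\PSp_d(q),\Omega)+1$ when $q$ is even and $b(G)=b(\PSp_d(q),\Omega)$ when $q=2$. The exceptional case $(d,q)=(4,2^f)$, $f>1$ carries a further graph-field automorphism which contributes at most an extra $+1$, while $(d,q)=(4,2)$ is excluded by hypothesis.

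Next I would split on the type of $\Omega$. For $\Omega = \SS(G,k)$ with $1 \le k \le d/2$, the degree from \cite[Table 4.1.2]{burngiu} is
\[
n = \prod_{i=1}^{k}\frac{q^{d-2i+2}-1}{q^i-1},
\]
which already for $k=1$ gives $n = (q^d-1)/(q-1) > q^{d-1}$; combined with Lemma~\ref{lem:sing_one}, which provides $b(\PSp_d(q),\Omega) \le d$, this immediately handles $q=2$ (since $\log(2^d-1)+1 > d$) and for $q \ge 3$ a short check using $(d-1)\log q + 1 > d+2$ for $d\ge 5$ works, with $d=4$ requiring minor extra care. For $k=2$, Lemma~\ref{lem:twosing} gives $b(\PSp_d(q),\Omega) \le \lceil d/2\rceil$ (with $\lceil d/2\rceil+1$ for $d\le 6$), while a telescoping estimate yields $n > q^{2d-6}$, more than enough. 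For $3 \le k \le d/2$, Lemma~\ref{lem:HLMbounds}(i) supplies $b(G_0) \le d/k+10$, whereas the product above is bounded below by roughly $q^{k(d-k) - O(k)}$; since $k(d-k)$ dwarfs $d/k + 12$ whenever $k \le d/2$, the inequality holds with room to spare, and the few boundary instances (e.g.\ $k=3$, $q \in \{2,3\}$, and the case $k=d/2$ where $n = \prod(q^i+1) \ge q^{k(k+1)/2}$) are handled by a direct computation, in the spirit of the \MAGMA\ checks already appearing in the paper.

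For $\Omega = \NN(G,k)$, by Witt's theorem the stabilizer of a non-degenerate $k$-space also stabilizes its non-degenerate orthogonal complement, so without loss of generality $k \le d/2$; since in a symplectic space only even-dimensional subspaces can be non-degenerate, $k$ is even. The degree from \cite[Table 4.1.2]{burngiu} is
\[
n = \frac{q^{k(d-k)/2}\,\prod_{i=(d-k)/2}^{d/2-1}(q^{2i}-1)}{\prod_{i=1}^{k/2}(q^{2i}-1)},
\]
and an elementary estimate gives $n \ge c\, q^{kd-k^2}$ for an absolute constant $c$. For $k=2$, Lemma~\ref{lem:twonsing} gives $b(\PSp_d(q),\Omega) \le \lceil d/2\rceil$ and the degree exceeds $q^{2d-5}$, which suffices comfortably. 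For $k \ge 4$, Lemma~\ref{lem:HLMbounds}(ii) provides $b(G_0) \le d/k+11$, and since the quadratic $kd-k^2$ is minimized on $[4,d/2]$ at $k=4$ with value $4d-16$, the bound $\log n + 1 \ge (4d-16)\log q - 1$ dominates $d/4 + 13$ in all but a handful of small cases.

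The main obstacle I expect is the small-parameter boundary: the combination of $q=2$, small $d$, and small or extremal $k$, where the inequality $b(G) < \log n + 1$ is tight and the generic outer-automorphism accounting is not sharp enough. In particular, the exceptional graph-field automorphism in $\Aut(\PSp_4(2^f))$ requires careful treatment of $\SS(G,2)$ and $\NN(G,2)$, and certain $(d,k,q)$ triples (notably $k=3$ with $q\in\{2,3\}$ and the self-dual $k=d/2$ Lagrangian action for small $d$) are likely to need either a direct \MAGMA\ computation or a sharper explicit base than what Lemma~\ref{lem:HLMbounds} supplies.
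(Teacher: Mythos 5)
Your proposal mirrors the paper's own proof: it bounds $\Out(\PSp_d(q))$ and applies Lemma~\ref{lem:regularcycles}, splits on $\SS(G,k)$ versus $\NN(G,k)$ and on small versus general $k$, feeds in the same explicit base bounds (Lemmas~\ref{lem:sing_one}, \ref{lem:twosing}, \ref{lem:twonsing} and~\ref{lem:HLMbounds}), and disposes of the tight small-parameter cases by direct computation, exactly as the paper does. One small slip worth correcting: $\Out(\Sp_4(2^f))$ is cyclic of order $2f$ --- the graph-field automorphism generates it, with its square being the ordinary field automorphism --- so the ``extra $+1$'' you set aside for $(d,q)=(4,2^f)$ with $f>1$ is unnecessary; Lemma~\ref{lem:regularcycles} already gives $b(G)\le b(G_0,\Omega)+1$ there, which is precisely the paper's bound under its condition ``$q>2$ is even or prime.''
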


\begin{proof}
  We shall use throughout the proof the fact that
  $\Out(G_0)$ has a normal series with at most two cyclic
  quotients, so 
  $b(G, \Omega) \le b(G_0, \Omega) + 2$, with $b(G, \Omega) \le  b(G_0,
  \Omega) + 1$ if $q > 2$ is even or prime, by
  Lemma~\ref{lem:regularcycles}. 
  
  First let $\Omega = \SS(G, k)$. Then  $1 \le k \le d/2$, and by 
  \cite[Table 4.1.2]{burngiu}
\begin{equation}\label{eq:symp_deg}
n =\frac{\prod_{i=\frac{d}{2}-k+1}^{\frac{d}{2}}(q^{2i}-1)}{\prod_{i=1}^{k}(q^{i}-1)}
          = \prod_{i = 1}^k \frac{q^{d-2k+2i} - 1}{q^i-1}.
\end{equation}
If $k =1$ then $n=(q^d-1)/(q-1)>q^{d-1}$. By Lemma~\ref{lem:sing_one},
$b \le d+2$, with $b \le d$ if $q = 2$ and $b \le d+1$ if $q = 3$. 
The result now follows from a straightforward calculation, since $d
\ge 4$.

If $k = 2$ and $d \ge 6$ then  $b \le d$, by
Lemma~\ref{lem:twosing}, whilst $\log n + 1 > (2d-5) + 1 \ge  b$. 
If  $k = 2$ and $d = 4$ then Lemma~\ref{lem:twosing}
  implies that $b(G_0, \Omega) \le 4$ and a routine calculation shows that
  $b < \log n + 1$.

  If $k \ge 3$ then Lemma~\ref{lem:HLMbounds}(i) yields
	$	b\leq \frac{d}{k}+12$, 
with  $b \leq \frac{d}{k} + 11$ when $q \le 8$. 
First suppose that $d - 2k\ge 2$, so that $d \ge 8$.   If $(d, q) =
(8, 2)$ then we verify the result in \MAGMAn. Otherwise, we notice that
$n > |\SS(\PGO^\pm_d(q), k)|$, and our upper
bounds on $b$ are less than the corresponding
bounds for the orthogonal groups, so the result follows by the same
calculations as in the
proof of Proposition~\ref{prop:ortheven}.
We may therefore assume that $k=\frac{d}{2}$, so that  $b \le 14$
in general, and $b \le 13$ if $q \le 8$. In
this case $n = 	\prod_{i=1}^{\frac{d}{2}}(q^i+1)\ge
q^{\frac{d(d+2)}{8}}$, so if
$d \ge 10$ then the
result is immediate.
For $d = 6$ and $q \le 4$, a \MAGMA calculation establishes the
result, whilst if $q \ge 5$ then $\log n + 1 > 14 \ge b$.  
For $d = 8$, if  $q=2$ then $\log n + 1 > 12 \ge b$, whilst for
$q \geq 3$ we deduce that 
	$\log n +1 > 16 > b$. 

        \medskip

Next let $\Omega = \NN(G, k)$. Then $k$ is even  and without loss of
generality $k \leq d/2 - 1$. By 
\cite[Table 4.1.2]{burngiu} 
\[
	n =\frac{q^\frac{k(d-k)}{2}\prod_{i=\frac{d-k+2}{2}}^{\frac{d}{2}}(q^{2i}-1)}{\prod_{i=1}^{\frac{k}{2}}(q^{2i}-1)}.
\]
        If $k = 2$ then $d \ge 6$ so $\log n + 1 \ge ((d-2) +
        (d-2))\log q + 1 > d
        \ge b$, by Lemma~\ref{lem:twonsing}. 
        If  $k\ge4$ then from $d\ge 2k+2$ we deduce that
        $q^{\frac{k(d-k)}{2}} \ge q^{\frac{k(k+2)}{2}} \ge q^{12}$ and 
        $(q^{d-k+2} -
       1) > q^2(q^2-1)(q^k-1)$, so 
$\prod_{i=\frac{d-k+2}{2}}^{\frac{d}{2}}(q^{2i}-1)\ge
(q^{d}-1)q^2\prod_{i=1}^{\frac{k}{2}}(q^{2i}-1)$. Putting these
together shows that 
$	n\ge
q^{12}(q^{d}-1)q^2 >  q^{d+13}$, so the result follows
from 
 Lemma~\ref{lem:HLMbounds}(ii).
\end{proof}

\begin{prop}\label{prop:uni}
Let $d \ge 3$,  let $G$ be almost simple with socle
  $G_0 = \PSU_d(q)$, 
let $\Omega$ be $\SS(G, k)$ or  $\NN(G, k)$, and let $n = |\Omega|$.
Then $b:= b(G) < \log n + 1$.
\end{prop}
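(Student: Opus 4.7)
The plan is to follow closely the template of Propositions~\ref{prop:ortheven}--\ref{prop:sp}. I will first record that $\Out(\PSU_d(q))$ admits a normal series with cyclic quotients of length at most three (coming from diagonal, field and graph-field automorphisms), whence Lemma~\ref{lem:regularcycles} yields $b(G, \Omega) \le b(\PGU_d(q), \Omega) + 2$, with a one-better bound when $q = 2$ or the outer structure is otherwise simpler. This reduces every estimate to bounds supplied either by Section~\ref{sec:onetwo} or by Lemma~\ref{lem:HLMbounds}, against degrees read off from \cite[Table 4.1.2]{burngiu}.

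For $\Omega = \SS(G, k)$ with $1 \le k \le \lfloor d/2 \rfloor$, the relevant degree is
\[
n \;=\; \prod_{i=1}^{k}\frac{\bigl(q^{d-k+i}-(-1)^{d-k+i}\bigr)\bigl(q^{d-k+i-1}-(-1)^{d-k+i-1}\bigr)}{q^{2i}-1},
\]
from which I will extract a crude lower bound of the form $n \ge q^{k(2d-2k-1)}$ (and the sharper $n\ge q^{k^{2}}$ at $k=\lfloor d/2\rfloor$). For $k=1$, Lemma~\ref{lem:sing_one} gives $b \le d$ while $n > q^{2d-3}$, so $\log n + 1 > b$ at once for $d\ge 3$. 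For $k=2$, Lemma~\ref{lem:twosing} gives $b\le \lceil d/2\rceil$ when $d\ge 7$, together with the explicit small-dimensional bounds ($b\le 5$ for $\PGU_4(q)$ and $b\le 4$ when $d\in\{5,6\}$), and the degree comfortably dominates. For $k\ge 3$, Lemma~\ref{lem:HLMbounds}(i) delivers $b\le d/k + 12$, which the quadratic growth of $\log n$ in $d$ handles straightforwardly.

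For $\Omega = \NN(G, k)$ with $1\le k\le \lfloor d/2\rfloor$, the degree is of the shape $q^{k(d-k)}$ times a ratio of products of terms $q^{i}-(-1)^{i}$, which I will bound below by $q^{k(d-k)+c(d,k)}$ with $c(d,k)$ positive and linear in $d$ for fixed $k$. For $k=1$, Lemma~\ref{lem:nondeguni1} gives $b\le d$, and the degree exceeds $q^{2(d-1)}/(q+1)$, handling all $d\ge 3$ and $q\ge 2$. For $k=2$ I invoke Lemma~\ref{lem:twonsing} (giving $b\le \lceil d/2\rceil$ for $d\ge 7$ and explicit bounds for $d\in\{5,6\}$), and for $k\ge 3$ I invoke Lemma~\ref{lem:HLMbounds}(ii) giving $b\le d/k+13$; in both regimes the degree estimate produces a large asymptotic gap.

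The main obstacle will be the interaction of small $(d,q)$ with the cases where the degree estimate is weakest, in particular small $d$ together with $k$ near $\lfloor d/2 \rfloor$ in the totally singular action, and the small-dimensional cases where Section~\ref{sec:onetwo} only gives bounds for $\PGU_d(q)$ with specific $d$. I expect to clear these residual cases by direct evaluation of the degree formula combined with the explicit bases of Section~\ref{sec:onetwo}, falling back on \MAGMA computation for a handful of very small $(d,q)$; no genuinely new technique should be required beyond those deployed in the symplectic and orthogonal analogues.
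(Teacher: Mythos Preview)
Your plan is the paper's: bound $b(G)$ via $b(\PGU_d(q))$ plus a correction for outer automorphisms, invoke Section~\ref{sec:onetwo} for $k\le 2$ and Lemma~\ref{lem:HLMbounds} for $k\ge 3$, compare against degree lower bounds from \cite{burngiu}, and mop up finitely many small $(d,q)$ by computation. However, two of your stated inputs are wrong and must be fixed before the argument runs.

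First, for $\PSU_d(q)$ there is no graph automorphism separate from the field automorphisms: $\Aut(G_0)/\PGU_d(q)$ is cyclic (of order $2f$ when $q=p^f$), so $\Out(G_0)$ admits a length-two cyclic series, not three. Lemma~\ref{lem:regularcycles} therefore gives $b(G)\le b(\PGU_d(q),\Omega)+1$ and $b(G)\le b(G_0,\Omega)+2$, exactly as the paper uses; your $+2$ over $\PGU_d(q)$ is still valid but unnecessarily loose. Second, and more seriously, your degree formula for $\SS(G,k)$ is mis-indexed: the numerator exponents should run over $d-2k+1,\dots,d$, whereas in your product the exponents $d-k+1,\dots,d-1$ each occur twice. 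The bound $n\ge q^{k(2d-2k-1)}$ you plan to extract is consequently \emph{false} for $k\ge 2$: already at $(d,k)=(5,2)$ one has $n=(q^3+1)(q^5+1)\sim q^8$, not $q^{10}$. The paper instead uses $n>q^{2d-4}$ for $k=2$ and $n\ge q^{3d-9}$ for $k\ge 3$ (together with computation for $(d,q)\in\{(6,2),(6,3),(7,2),(7,3)\}$), and with those corrected bounds your comparisons go through exactly as you outline.
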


\begin{proof}
  We shall use throughout the proof the facts that
  $\Aut(G_0)/\PGU_d(q)$ is cyclic, whilst $\Out(G_0)$ has a normal
  series with two cyclic quotients, so
  $b(G, \Omega) \leq b(\PGU_d(q), \Omega)+1$ and $b(G, \Omega) \leq
b(G_0, \Omega) + 2$,  by
Lemma~\ref{lem:regularcycles}. 

First let $\Omega = \SS(G, k)$. 
Then by \cite[Table 4.1.2]{burngiu}
\[ n
         =\frac{\prod_{i=d-2k+1}^{d}(q^{i}-(-1)^i)}{\prod_{i=1}^{k}(q^{2i}-1)}
       = \prod_{i = 1}^k \frac{(q^{d - 2k + 2i - 1} - (-1)^{d-1})(q^{d-2k +
          2i} - (-1)^{d})}{q^{2i} - 1}, 
\]
so $n > q^d$ if $k = 1$, $n > q^{2d-4}$ if $k = 2$,   and
\begin{equation}\label{nunitsing2}
n \geq \prod_{i = 1}^k (q^{d - 2k +
            2i - 1} + 1) \geq q^{(d-1)+(d-3)+(d-5)}=q^{3d-9} \mbox{ if
            $k \ge 3$.}
	\end{equation}
If $k = 1$ then $b \leq d+1 \leq \log n + 1$ by
Lemma~\ref{lem:sing_one}, so
let $k = 2$. If $(d, q) = (4, 2)$ then a
\MAGMA calculation shows the result, and otherwise if $d = 4$ then  $b
\le 6 \le 4 \log q + 1 < \log n + 1$, by
Lemma~\ref{lem:twosing}, as required.
If $d \ge 5$ then $b \le d < \log n +1$  by Lemma~\ref{lem:twosing}. 

Finally, let $k \ge 3$, so that $d \ge 6$. 
For $(d, q) \in \{(6, 2), (6, 3), (7, 2), (7, 3)\}$ we verify the
result computationally. Otherwise,  $b \leq \frac{d}{k}+12 \le d/3  +
12$ by
Lemma~\ref{lem:HLMbounds}(i). 
If $d \ge 8$  then \eqref{nunitsing2} gives 
$\log n+1\ge 3d-8\ge \frac{d}{3}+12 \geq b,$
as required.
Similarly, if $q\ge 4$ then
$\log n+1\ge 2(3d-9) +1 \ge d/3+12 \geq
b$, which covers all the remaining cases. 

\medskip

Now let $\Omega = \NN(G, k)$. 
Then by \cite[Table 4.1.2]{burngiu}
\begin{align*}
n=\frac{q^{k(d-k)}\prod_{i=d-k+1}^{d}(q^{i}-(-1)^i)}{\prod_{i=1}^{k}(q^{i}-(-1)^i)}.
\end{align*}
If $k \le 2$ then $n > q^d$, and the result follows easily from
Lemma~\ref{lem:nondeguni1} and
Lemma~\ref{lem:twonsing}.
For  $k \ge 3$, we get
$\prod_{i=d-k+1}^{d}(q^{i}-(-1)^i)\ge (q^d-(-1)^d) \prod_{i=1}^{k}(q^{i}-(-1)^i)
$, because $d\ge 2k+1\ge 7$.  Hence
$
n\geq q^{k^2+k}(q^{d}-(-1)^d)\ge q^{d+11},
$ and the result follows from Lemma~\ref{lem:HLMbounds}(ii).
\end{proof}

\begin{prop}\label{prop:psl} Let $d\ge 2$ and when $d=2,$  let $q\ge 7$.  
  Let $G$ be almost simple with socle
  $G_0 = \PSL_d(q)$,
let $\Omega=\SS(G, k)$, and let $n = |\Omega|$.
Then $b:= b(G) < \log n + 1$. \end{prop}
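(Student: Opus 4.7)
The plan is to follow the pattern established in Propositions~\ref{prop:ortheven}--\ref{prop:uni}, proceeding by cases on $k$ and reducing to $\PGL_d(q)$ via Lemma~\ref{lem:regularcycles}. The quotient $\Aut(G_0)/\PGL_d(q)$ is generated by a field automorphism of cyclic order $f$ together with a graph automorphism of order $2$, and the graph automorphism interchanges $k$-spaces with $(d-k)$-spaces, so it preserves $\Omega$ only when $d = 2k$. In every case $G/\PGL_d(q)$ therefore admits a subnormal series of length at most $2$ with cyclic quotients, yielding $b(G) \le b(\PGL_d(q), \Omega) + 2$. By duality between $k$- and $(d-k)$-subspaces we may also assume $1 \le k \le d/2$.

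For $k = 1$ and $d \ge 3$, the degree is $n = (q^d - 1)/(q-1) \ge q^{d-1}$, and Lemma~\ref{lem:linone}(1)--(2) supplies a base for $\PGL_d(q)$ consisting of the $d$ standard projective basis points together with $\langle v_1 + \cdots + v_d \rangle$, giving $b(\PGL_d(q)) \le d + 1$ and the sharper bound $b(\PGL_d(q)) \le d$ when $q = 2$. A routine comparison with $\log n + 1 \ge (d-1)\log q + 1$ then settles this range. For $d = 2$ and $q \ge 7$, one uses the sharp $3$-transitivity $b(\PSL_2(q)) = 3$ on $\PG(1,q)$; since each field automorphism has a regular cycle on $\PG(1,q)$, Lemma~\ref{lem:regularcycles} gives $b(G) \le 4$, which is less than $\log(q+1) + 1$ for $q \ge 8$, while $q = 7$ is immediate from $b(G) = 3$. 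For $k = 2$, Lemma~\ref{lem:twospacelin} gives $b(\PGL_d(q)) \le \lceil d/2 \rceil + 2$ for $d \ge 5$ and $b(\PGL_4(q)) \le 5$, while the Gaussian binomial $n = \binom{d}{2}_q \ge q^{2(d-2)}$ easily dominates these bounds. For $k \ge 3$, Lemma~\ref{lem:HLMbounds}(iii) gives $b(G_0) \le d/k + 5$, so $b(G) \le d/k + 7$; using $n \ge q^{k(d-k)}$ and the observation that $k(d-k) \ge 3(d-3)$ on the range $3 \le k \le d/2$, the comparison reduces to the routine inequality $3(d-3)\log q + 1 > d/k + 7$, with the finitely many residual small values of $(d, k, q)$ verified directly in \MAGMAn.

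The main delicacy arises in two tight boundary cases. The $k = 1$, $q = 2$ case reduces to the inequality $2^{d-1} < 2^d - 1$, which holds for all $d \ge 2$ but leaves essentially no slack, so one must use the sharp bound $b(\GL_d(2)) \le d$ from Lemma~\ref{lem:linone}(1) rather than the generic $d + 1$. Similarly, in the $d = 2$ case with $q \ge 7$, the small degree $q + 1$ must be offset against a possibly large field automorphism group, so the argument must exploit the $3$-transitivity of $\PSL_2(q)$ together with the regular cycles of field automorphisms rather than any generic base-size estimate.
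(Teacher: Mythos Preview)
Your overall strategy matches the paper's: split on $k$, reduce to the projective isometry group via Lemma~\ref{lem:regularcycles}, and invoke Lemmas~\ref{lem:linone}, \ref{lem:twospacelin}, and \ref{lem:HLMbounds}(iii). However, the uniform bound $b(G)\le b(\PGL_d(q),\Omega)+2$ that you extract is too coarse for several of the ``routine'' comparisons, and you only patch one of the resulting failures. For $k=1$ and $q=3$ your bound gives $b\le d+3$, but $(d-1)\log 3+1>d+3$ fails for $3\le d\le 7$; similar breakdowns occur at $(k,d,q)=(1,3,4)$, $(1,3,5)$, and $(2,4,2)$. You flag the tight $q=2$ case but not these. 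The paper records at the outset that the subnormal series for $G/G_0$ has length at most two when $k\ne d/2$ \emph{or} $q$ is prime, and is cyclic when both hold; this yields $b\le d+2$ for $k=1$ in general and $b\le d+1$ for $q$ prime, which is exactly what the arithmetic requires. For $(k,d)=(2,4)$ and $q\le 3$ the paper falls back on a \MAGMA check.

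There is also an inconsistency in your $k\ge 3$ paragraph: you pass from bounding $b(\PGL_d(q))$ to bounding $b(G_0)$ and still add only $2$, but $\Out(G_0)$ may need a length-$3$ series when $k=d/2$ (diagonal, field, graph), giving $b\le d/k+8$ rather than $d/k+7$. With the correct $+8$ and your degree estimate $n\ge q^{k(d-k)}$, the comparison at $(d,k,q)=(6,3,2)$ is not strict, so this case genuinely needs either the sharper degree the paper uses ($n>q^{d+3}$, strictly) or your \MAGMA escape hatch. None of this is fatal to the approach, but the paper's more careful bookkeeping of the series length is doing real work that your proposal glosses over.
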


\begin{proof}
The group $\Out(G_0)$ has a normal series with all quotients cyclic of
length at most three, and $G/G_0$ has such a series with length at
most two if $k \neq d/2$, or if $d = 2$, or if $q$
is prime; and is cyclic if more than one of these conditions hold. 
Hence by Lemma~\ref{lem:regularcycles}, $b \le b(G_0, \Omega) + \ell$, where $\ell = 3$ in
  general, but with smaller values of $\ell$ for the special cases above.

First let $k = 1$, so that $n = (q^d-1)/(q-1) > q^{d-1}$, whilst
$b\le d+2$ by Lemma~\ref{lem:linone}, with smaller bounds if $q\le 3.$
The result follows from a lengthy but straightforward calculation,
using $n = q+1 \ge 8$ when $d = 2$. 

If $k = 2$ then $n 
> q^{2d-4}$.  If $d = 4$ and $q \le 3$ then a \MAGMA calculation
             shows that $b \le  5 < \log n$, and if  $d = 4$ and $q > 3$ then $b
             \le 5 + 2 < \log n$,  by
              Lemma~\ref{lem:twospacelin}.  If $d
              > 4$ then  $b \le \lceil d/2
              \rceil + 3 < \log n + 1$ by Lemma~\ref{lem:twospacelin}.

Assume finally that $d/2 \ge k \ge 3$, so that $d \ge 6$, and 
\[
n
=
\frac{q^d-1}{q-1} \cdot \frac{q^{d-1}-1}{q^2-1}  \cdot \frac{q^{d-2} -
  1}{q^3-1} \prod_{i=1}^{k-3}
\frac{(q^{d-k+i}-1)}{(q^{i+3}-1)} > q^{(d-1) + 3 + 1} = q^{d+3}.
\]
Then from Lemma~\ref{lem:HLMbounds}(iii), we deduce that $b \leq d/3 +
8\le d+4 \le
\log n + 1$.
\end{proof}

We now meet the unique infinite family of examples that
attains the upper bound in Theorem~\ref{thm:main}.

\begin{prop}\label{prop:sp_on_orth}
  Let $q = 2^f$, let $d=2m\ge 4$, and let  $G$ be almost simple with socle $G_0=\Sp_{d}(q)$. Assume that $(d,q)\neq (4,2)$.  Let
  $M=N_G(\GO^{\epsilon}_{d}(q))$, let $\Omega=M\setminus G,$ let $n=|\Omega|$, and let $b=b(G)$.

  If $\epsilon = -$ and $q = 2$ then
  $\log n + 1 < b=\lceil \log n\rceil+1.$
  Otherwise, $b< \log 
n+1.$ 
\end{prop}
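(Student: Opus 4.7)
The plan is to combine the sharp value $b(G_0) = 2m$ from Proposition~\ref{prop:exactbasesize} with a one-point extension via Lemma~\ref{lem:regularcycles}, and then to compare with an exact formula for $n$. First I would compute $n$: since $q$ is even, $\GO_d^\epsilon(q)$ is a maximal self-normalising subgroup of $G_0 = \Sp_d(q)$, so $n = |G : M| = |G_0 : \GO_d^\epsilon(q)|$, and the standard order formulas in characteristic two give
\[
  n = \frac{q^m(q^m+\epsilon)}{2}.
\]

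Next I would bound $b = b(G)$ from above. When $d \ge 6$, Proposition~\ref{prop:exactbasesize} gives $b(G_0) = 2m$, and since $q = 2^f$ the outer automorphism group $\Out(G_0)$ is cyclic, so Lemma~\ref{lem:regularcycles} yields $b \le b(G_0) + 1 = 2m + 1$. If moreover $q = 2$, then $\Out(G_0) = 1$, so $G = G_0$ and $b = 2m$ exactly. The boundary case $d = 4$ forces $q \ge 4$ and lies outside the scope of Proposition~\ref{prop:exactbasesize}; it is handled via the exceptional isomorphism $\Sp_4(q) \cong \GO_5(q)$, which identifies $\Omega$ with a $G_0$-orbit of non-singular $1$-spaces, together with an explicit base construction of size at most $5$ by arguments analogous to those of Section~\ref{sec:onetwo}.

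Finally I would split into cases to compare $b$ with $\log n + 1$. For $q \ge 4$, the estimate $\log n \ge 2m\log q - 2 \ge 4m - 2$ yields $\log n + 1 > 2m + 1 \ge b$ for all $m \ge 2$. For $q = 2$ and $\epsilon = +$, the identity $n = 2^{m-1}(2^m + 1) > 2^{2m-1}$ gives $\log n + 1 > 2m = b$. The exceptional case is $q = 2$, $\epsilon = -$: here $n = 2^{m-1}(2^m - 1)$, and the elementary inequalities $2^{m-1} < 2^m - 1 < 2^m$ (valid for $m \ge 2$) place $\log n$ strictly between $2m - 2$ and $2m - 1$, so $\lceil \log n \rceil = 2m - 1$. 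Combined with $b = 2m$, this gives $\log n + 1 < 2m = b = \lceil \log n \rceil + 1$, as required.

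The main obstacle is this sharp exceptional case: the equality $b = \lceil \log n \rceil + 1$ depends on pairing the nontrivial lower bound $b \ge 2m$ from Proposition~\ref{prop:exactbasesize} with the tight estimate $\log n < 2m - 1$, so any slack in either direction would destroy the identity. A secondary nuisance is the boundary value $d = 4$, which must be treated outside the framework of Proposition~\ref{prop:exactbasesize} but which, since $q \ge 4$ there, carries plenty of room in $\log n$.
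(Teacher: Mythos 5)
Your overall structure follows the paper's, and the delicate case $(q,\epsilon)=(2,-)$ is handled correctly: pairing the exact value $b=2m$ from Proposition~\ref{prop:exactbasesize} (valid here because $(d,q)\ne(4,2)$ forces $d\ge 6$ when $q=2$) with the inequality $2^{2m-2}<n<2^{2m-1}$ to get $\lceil\log n\rceil+1=2m=b>\log n+1$. The $(q,\epsilon)=(2,+)$ case also matches the paper.

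There is, however, a genuine problem in your treatment of $d=4$ (so $q\ge 4$). Your proposed identification of $\Omega$ with a $G_0$-orbit of non-singular $1$-spaces under $\Sp_4(q)\cong\GO_5(q)$ is incorrect. The isomorphism sends $\Omega$ to an orbit of non-degenerate $4$-dimensional subspaces $T$ of the $5$-dimensional orthogonal module $V$ in characteristic~$2$ --- exactly as in the proof of Proposition~\ref{prop:exactbasesize}. All such $T$ have $T^\perp=\rad(V)=\langle x\rangle$, the same $1$-space, so taking perps does not give a bijection. Moreover, the stabiliser in $\GO_5(q)$ of a non-singular $1$-space $\langle v\rangle\ne\langle x\rangle$ is not $\GO^\pm_4(q)$: since every isometry fixes $x$, the stabiliser of $\langle v\rangle$ fixes $v$ itself and has a nontrivial unipotent radical acting on $v^\perp$, so it is not even reductive. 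The claimed $b\le 5$ for $d=4$ is therefore unsubstantiated, and as a consequence your chain $\log n+1>2m+1\ge b$ does not cover $m=2$. The paper avoids all of this by treating $q\ge4$ uniformly (including $d=4$) via the bound $b(G_0,\Omega)\le 2m+1$ from \cite{HaLiMa} together with Lemma~\ref{lem:regularcycles} to get $b\le 2m+2$, which still satisfies $\log n+1>(2m-1)\log q\ge 4m-2\ge 2m+2$ for $m\ge 2$. Replacing your $d=4$ detour by that HaLiMa estimate (and relaxing $2m+1\ge b$ to $2m+2\ge b$ in the comparison) repairs the argument; everything else you wrote is sound.
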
 

\begin{proof}
We calculate that $n=|\Sp_d(q):\GO_d^{\epsilon}(q)|=q^{m}(q^{m} + \epsilon)/2$.
If $q=2$ then $b= 2m$ by
Proposition~\ref{prop:exactbasesize}.  If
$\epsilon = +$ then
$n >2^{2m-1},$ hence $\log n+1>  b.$ 
If $\epsilon = -$ then 
$  \lceil \log n\rceil+1
=2m=b$.

It is proved in \cite{HaLiMa} that
$b(G_0, \Omega)
\leq 2m+1$, so $b \leq 2m+2$
by Lemma~\ref{lem:regularcycles} since $\Out(G_0)$ is cyclic.
Therefore if $q \ge 4$ then 
$$\log n+1 >  \log (q^{2m-1}/2)+1 =  (2m-1)\log q\ge 4m - 2 \ge
b,$$ and the proof is complete. 
\end{proof}

Our final result in this subsection deals with all of the remaining subspace actions.

\begin{prop}\label{prop:nov}
  Let $G$ be an almost simple classical group, with a primitive subspace action
  on a set $\Omega$ of size $n$, with point stabiliser $H$. Assume that $\Omega$ is not a $G$-orbit of
  totally singular, non-degenerate, or 
   non-singular subspaces, and that if $G_0 = \soc(G) =
  \Sp_{2m}(2^f)$ then $(G_0 \cap H)\neq \GO^{\pm}_{2m}(2^f)$.
  Then $b:=b(G) < \log n + 1$.
\end{prop}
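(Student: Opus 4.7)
The plan is as follows. Since $H$ is a subspace subgroup but $\Omega$ is not an orbit of any of the excluded single-subspace types, we are forced into a novelty situation: $H\cap G_0$ is properly contained in at least two distinct maximal subgroups $M_1,M_2,\ldots$ of $G_0$, each of subspace type, and the outer automorphisms in $H$ permute these $M_i$ transitively. Equivalently, a point of $\Omega$ may be identified with a small set (typically an unordered pair) of subspaces exchanged by outer automorphisms. My first task is to enumerate these novelties via the standard maximal-subgroup lists (e.g.~\cite{liekle,Cbook}). The main families are pairs $\{U,W\}$ of subspaces of complementary dimensions in $\PSL_d(q)$ extended by a graph automorphism (including incident flags), pairs of maximal totally singular subspaces in $\POmega^+_{2m}(q)$ lying in the two different $G_0$-orbits swapped by the graph automorphism, and the triality novelties for $\POmega^+_8(q)$.

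For each novelty, I will pick a convenient $G_0$-orbit $\Omega_0$ of single subspaces of $V$, namely the orbit of one ``component'' of a compound object in $\Omega$. Since $H\cap G_0 \le (G_0)_U$ for any $U\in \Omega_0$, and $[H : H\cap G_0] \le [G : G_0]$, I get the direct lower bound
\[
 n \;=\; [G:H] \;=\; \frac{[G:G_0]\,[G_0:H\cap G_0]}{[H:H\cap G_0]} \;\ge\; [G_0:(G_0)_U] \;=\; |\Omega_0|.
\]
The key base-size inequality is $b(G,\Omega) \le b(G_0,\Omega_0) + O(1)$: if $\BB_0=\{U_1,\ldots,U_r\}$ is a base for $G_0$ on $\Omega_0$ and $\tilde U_i\in\Omega$ is a compound object containing $U_i$, then the pointwise $G_0$-stabiliser of $\{\tilde U_1,\ldots,\tilde U_r\}$ lies inside $(G_0)_{\BB_0}=1$, because in each case no element of $G_0$ can swap the components of any $\tilde U_i$ (either they have distinct dimensions, or, in the orthogonal case, they lie in different $G_0$-orbits). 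The remaining $G$-stabiliser of $\tilde\BB$ therefore embeds into $G/G_0$, whose order is polynomial in $\log q$, and can be eliminated by adjoining $O(\log|G/G_0|)=O(\log\log q)$ further points via the regular-cycle argument of Lemma~\ref{lem:regularcycles}.

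Combining this with the bounds of the form $b(G_0,\Omega_0) < \log|\Omega_0|+1 - C$ already proved in Propositions~\ref{prop:ortheven}--\ref{prop:psl}, all of which carry substantial slack (typically linear in $d\log q$), yields $b(G,\Omega) < \log n + 1$ immediately in all but finitely many cases. The main obstacle will be the bookkeeping for small-dimensional novelties, especially the triality novelties of $\POmega^+_8(q)$ (where three orbits of subspaces are permuted rather than two, and the ``components have different dimension'' argument requires adaptation), the low-dimensional $\PSL_d(q)$ cases with duality, and possibly the graph novelties for $\Sp_4(2^f)$; for these finitely many exceptions the result will be confirmed either by sharpening $b(G_0,\Omega_0)$ using the explicit bases of Section~\ref{sec:onetwo}, or by direct \MAGMA computation.
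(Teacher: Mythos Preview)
Your plan is essentially the paper's argument: recognise the novelty situation, embed $H\cap G_0$ (or $H\cap G_1$) in a genuine single-subspace stabiliser, invoke the already-proved bounds for that standard action, and mop up the outer automorphisms with Lemma~\ref{lem:regularcycles}. Two points where the paper is sharper than your sketch are worth noting. First, your enumeration is off: the only socles yielding subspace novelties are $\PSL_d(q)$ with $d\ge 3$, $\PSp_4(q)$ with $q$ even, and $\POmega^+_8(q)$; for general $\POmega^+_{2m}(q)$ the graph involution merely fuses the two $G_0$-orbits of maximal totally singular subspaces into a single $\SS(G,m)$, which is an ordinary single-subspace action already handled. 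Second, and more importantly, you rely on unspecified ``substantial slack'' in the earlier propositions together with the weak bound $n\ge|\Omega_0|$, whereas the paper avoids any slack-hunting by choosing an intermediate $G_1$ with $[G:G_1]$ needing at most two cyclic layers and observing that $H\cap G_1$ has index strictly greater than $4$ in the relevant parabolic $H_1$ of $G_1$. This gives $n>4|\Omega_1|$, so $b(G)\le b(G_1,\Omega_1)+2 < \log|\Omega_1|+3\le \log n+1$ drops out directly from Propositions~\ref{prop:ortheven}, \ref{prop:sp} and \ref{prop:psl} with no residual small cases and no \MAGMA checks. (Your $O(\log\log q)$ for the outer-automorphism cost is also overly pessimistic; the relevant $\Out(G_0)$ always admits a bounded-length subnormal series with cyclic quotients.)
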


\begin{proof}
  Definition~\ref{subspaceaction} implies
  that $G$ is not simple, and $H$ is a novelty maximal
subgroup of $G$. 
Consulting \cite{liekle} and \cite{Cbook}, we see that one of the following holds:
\begin{itemize}
\item[(i)] $G_0=\PSL_d(q)$, $d\ge 3$ and $G \not\leq \PGamL_d(q)$;
\item[(ii)] $G_0=\PSp_4(q)$, $q$ even and $G \not\leq \PCGamSp_4(q)$;
\item[(iii)] $G_0=\POmega_8^+(q)$ and $G \not\leq
  \mathrm{PC\Gamma O}^+_8(q)$ (in the notation of \cite[Table
  1.2]{Cbook}). 
\end{itemize}
In particular, from \cite{Cbook}, in each case there exists a group $G_1$ such that
$G_0 \unlhd G_1 \unlhd G$, the quotient $G/G_1$ has a normal series of length at most two
with all quotients cyclic, and $H \cap
G_1$ is a subgroup of
the stabilizer $H_1$  in $G_1$ of a totally singular
 $k$-space,  of index greater than four.  Let
$\Omega_1$ denote the right coset space of $H_1$ in $G_1$ and let
 $b_1= b(G_1,
\Omega_1).$ 
Then there exist $x_1,\dots, x_{b_1}\in G_1$ such that
$H_1^{x_{1}}\cap \dots\cap H_1^{x_{b_1}}$ is trivial, so 
$H^{x_{1}}\cap \dots\cap H^{x_{b_1}}\cap G_1$ is also trivial. By
Lemma~\ref{lem:regularcycles},  $b \leq b_1 + 2$ . 
 
Finally, notice that $n \geq 4|\Omega_1|$ by the Orbit-Stabiliser Theorem, so
if $b_1 < \log |\Omega_1| + 1 = \log 2|\Omega_1| \leq \log
n - 1$, then $b  < \log  n + 1$.  
The result is now immediate from
Propositions~\ref{prop:psl}, \ref{prop:sp} and \ref{prop:ortheven}.
\end{proof}
	
\subsection{Proof of Theorem~\ref{thm:almost_simple}}

\begin{proof}[Proof of Theorem~\ref{thm:almost_simple}]
  Let $G_0 = \soc(G)$.
 The only non-large-base almost simple primitive groups of
  degree $n \leq 8$ are the actions of $\Alt(5)$ and $\Sym(5)$ on $6$
  points, of $\PSL_3(2)$ on $7$ points,
  and of $\PSL_2(7)$ and $\PGL_2(7)$ on $8$ points, all of
  which have base size $3$, which is less than $\log n
  +1$. Hence the result holds for $n \le 8$, and therefore for $b(G) \leq
  4$.

Since the groups $\PSL_2(q)$ are isomorphic to many other simple
groups, we shall consider them next. If $G_0 \cong \PSL_2(5)$ then all actions either
have degree at most $6$ or are large base, so
let $G_0$ be $\PSL_2(q)$ for $q \ge 7$, and let 
$H=G_\omega,$ for some $\omega \in \Omega.$
We work through the choices for $H$, as described in \cite[Table
8.1]{Cbook}. The result for $H \in \mathcal{C}_1$ follows from
Proposition~\ref{prop:psl}. Burness shows in \cite[Table
3]{Burness07} that $b(G) \le 3$ for the majority of the remaining
choices of $H$. More precisely, he shows that  $b(G) \leq 3$ if $H \in \mathcal{C}_2 \cup
\mathcal{C}_3$, or if $H \in \mathcal{C}_5$ and $q=q_0^r$ with $r\neq 2$,
or if $H \in \mathcal{C}_6$ and $q  > 7$; or if $H \in
\mathcal{C}_9$ and $q \neq 9$. We therefore need consider only the
exceptions with $q \ge 7$. 
If $H \in
\mathcal{C}_5$ and 
$q=q_0^2$, then $q_0 \ge 3$ and the action of $G_0$ on
$\Omega$ is equivalent to that of $\POmega^-_4(q_0)$ on non-degenerate
$1$-spaces. If $q_0 = 3$ then $G_0 \cong \Alt(6)$,
 and the action is equivalent to the (large base) action on
$2$-sets. Hence we can assume that $q_0 \ge 4$, and the 
result follows from Lemma~\ref{lem:orth_small}. 
If either $H\in \mathcal{C}_6$ and $q =7$, or
$H\in \mathcal{C}_9$ and $q=9$, then $n\le 7$,  so the result follows.
 Thus for the remainder of the proof we
shall assume that $G_0 \not\cong \PSL_2(q)$.

Next,  assume that the action of $G$ is not  standard.
  Burness, Guralnick and Saxl show in \cite{BGS11} that if
$G_0 \cong \An$ then $b(G) \leq 3$. 
For classical groups $G$, Burness shows in
\cite[Theorem 1.1]{Burness07} that either $n=1408$ and $b(G) = 5$ 
or $b(G) \le 4$.
For the exceptional groups $G$, it is shown by Burness, Liebeck and
Shalev in~\cite{LIETYPE}, that $b(G) \leq 6$; since the
smallest degree of a faithful primitive representation of an
exceptional group is $65$ (see, for example, \cite[Table
B.2]{permutationgroups}), the result follows. Finally, Burness,
O'Brien and Wilson show in \cite{BurnessOBrienWilson10} that if $G$ is
sporadic,
then either $b(G) \leq 5$,
 or $G$
is $\mathrm{M}_{23}$, $\mathrm{M}_{24},$  $\mathrm{Co}_3$,
$\mathrm{Co}_2$, or $\mathrm{Fi}_{22}.2$, with a specified action.
If $\log n+1 \le 5,$ then $n\le 16$, and 
the only sporadic group with a faithful primitive action on at
most $16$ points, other than $\mathrm{M}_{12}$ as given
in the statement, is $\mathrm{M}_{11}$ on $11$ or $12$ points, with
base size $4.$
The actions of $\mathrm{M}_{23}$ and $\mathrm{M}_{24}$ are given
in the theorem statement,
whilst the remaining actions
 have base size $6$ and
very large degree. 

It remains to consider the standard actions that are not large base.
If $G_0 = \Alt(\ell)$, then $\Omega$ is an orbit of partitions of $\{1,
\ldots, \ell\}$, so $b(G) \leq \log n + 1$ by Theorem~\ref{thm:an_sn}.
Hence we may assume that $G$ is a classical group in a subspace
action.

If $G_0 = \PSL_d(q)$ then the result follows from
Propositions~\ref{prop:psl} and \ref{prop:nov}. If $G_0 = \PSU_d(q)$
then we may assume that $d \ge 3$, and the result follows from
Propositins~\ref{prop:uni} and \ref{prop:nov}.

If $G_0 = \PSp_d(q)$ then we may assume that $d \ge 4$, and $(d, q)
\neq (4, 2)$, since $\PSp_4(2)' \cong \PSL_2(9)$. 
If the action
is
on $k$-spaces then the result follows from
Proposition~\ref{prop:sp};
if $q$ is even and the point stabiliser is
$\GO^\pm_d(q)$, then it follows from
Proposition~\ref{prop:sp_on_orth}; and otherwise it follows from
Proposition~\ref{prop:nov}.

%

If $G_0 = \POmega^\varepsilon_d(q) $ then our assumption that $G_0
\not\cong \PSL_2(q)$ implies that $d \ge 5$, so assume first that $d
\in \{5, 6\}$, and let $H_0$ be whichever of $\PSp_4(q)$, $\PSL_4(q)$
or $\PSU_4(q)$ is isomorphic to $G_0$. 
If the action is on totally singular subspaces, then the
action of $G_0$ is equivalent to that of $H_0$
on totally singular subspaces. If the action is on non-degenerate
$2$-spaces, then the action of $G_0$ is equivalent to that
of $H_0$
on the maximal subgroups in Class $\mathcal{C}_2$ or $\mathcal{C}_3$,
and $b(G) \leq 3$ by \cite[Table 3]{Burness07}.  If 
 the action is on an orbit of non-degenerate $1$-spaces, 
 then the result follows from
 Lemma~\ref{lem:orth_small}, and otherwise it follows from Proposition~\ref{prop:nov}.
Hence we may assume that $d \geq 7$, and the result follows
from Propositions~\ref{prop:ortheven}, \ref{prop:orthodd} and \ref{prop:nov}.
\end{proof}

\section{Proof of Theorem~\ref{thm:main}} \label{sec:alltheremaining}

In this section, we prove  Theorem~\ref{thm:main}.

\begin{prop}\label{prop:SD}
Let $G\le \sym(\Omega)$ be a primitive group of diagonal type and degree $n$. Then
$b:=b(G)\leq \max \{4, \log \log n \}.$ In particular, $b <\log
n$.
\end{prop}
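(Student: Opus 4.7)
The plan is to use the structure of primitive diagonal-type groups combined with existing results on their base sizes. Recall that such a group $G$ has socle $N = T^k$ for some non-abelian finite simple group $T$ and integer $k \geq 2$, the point stabiliser contains the diagonal subgroup $D = \{(t, \dots, t) : t \in T\}$ of $N$, and the degree is $n = |T|^{k-1} \geq 60$.

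I would split into cases according to $k$. For $k \geq 3$, I would appeal to Fawcett's theorem on base sizes of simple diagonal-type groups, which gives $b(G) \leq 4$; in fact $b(G) = 2$ for $k \geq 5$, so only $k \in \{3, 4\}$ require the full bound. This immediately yields $b(G) \leq \max\{4, \log \log n\}$ in these cases.

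The main obstacle is the case $k = 2$, where $n = |T|$ and one needs $b(G) \leq \max\{4, \log \log |T|\}$. Here $\soc(G) = T \times T$ acts on $T$ via $(t_1, t_2) : x \mapsto t_1^{-1} x t_2$, and a direct calculation shows that the fixed point set of $(t_1, t_2) \ne (1, 1)$ has size $|C_T(t_1)|$ if $t_1$ and $t_2$ are $T$-conjugate and is empty otherwise; in particular the fixed point ratio is at most $1/|t^T|$ for some non-trivial conjugacy class. The standard probabilistic inequality
\[
Q(G, b) := \sum_{1 \ne g \in G} \left(\frac{\mathrm{fix}(g)}{n}\right)^{b} < 1 \; \Longrightarrow \; b(G) \leq b
\]
then reduces the problem to controlling a weighted sum $\sum_C |C|^{2-b}$ over non-trivial conjugacy classes of $T$, together with a smaller contribution from elements of $G \setminus \soc(G)$ (which behave well because outer automorphisms of $T$ have small centralisers). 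Using CFSG-based estimates on centraliser orders and fixed-point-ratios of Liebeck--Shalev type, one shows that $Q(G, b) < 1$ for $b$ of order $\log \log |T|$, which gives the bound. For small $|T|$, where $\log \log |T| < 4$, Fawcett's result $b(G) \leq 4$ closes the remaining gap. The technical core will be making the probabilistic estimates uniform in $|T|$ and properly handling the outer part of $G$.

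Finally, the inequality $b(G) < \log n$ is immediate from the main bound: since $n \geq 60$ we have $\log n > 5 > 4$, and $\log \log n < \log n$ whenever $n > 2$, so $\max\{4, \log \log n\} < \log n$.
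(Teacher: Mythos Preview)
Your case division is inverted relative to where the difficulty actually lies, and one of your claimed inputs is false.

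For $k=2$ no probabilistic argument is needed: Fawcett proves directly that $b(G)\le 4$ in this case, and since $\max\{4,\log\log n\}\ge 4$ this settles the case immediately. The fixed-point-ratio machinery you sketch is unnecessary here.

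The real content lies in the case $k\ge 3$, and there your key assertion is wrong. It is \emph{not} true that Fawcett's theorem gives $b(G)\le 4$ for all $k\ge 3$, nor that $b(G)=2$ for all $k\ge 5$. A primitive diagonal-type group may have $\Sym(k)$ permuting the $k$ socle factors, and when $k$ is much larger than $|T|$ the base size is unbounded: roughly $\log_{|T|}k$ base points are needed just to separate the $k$ coordinates. (Fawcett's $b(G)=2$ result applies only under the extra hypothesis that $G$ does not induce $\Alt(k)$ or $\Sym(k)$ on the factors, which need not hold here.) What Fawcett actually proves for $k\ge 3$ is
\[
b(G)\le \left\lceil \frac{\log k}{\log |T|}\right\rceil + 2,
\]
and this is what the paper uses. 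For $3\le k\le |T|$ it yields $b\le 3$; for $k>|T|\ge 60$ one has $n\ge 60^{60}$, hence $\log\log n>8$, and combining the displayed bound with $\log k<\log\log n$ and $\log|T|\ge\log 60>5$ gives $b\le \tfrac{1}{5}\log\log n+3\le \log\log n$. Your proposal does not address this large-$k$ regime at all, and the uniform bound you quote for it is false.

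Your final paragraph deriving $b<\log n$ from the main inequality is fine and matches the paper's reasoning.
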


\begin{proof}
  Let $\soc(G) = T^k$, where $T$ is a non-abelian simple group and $k\geq
2$.
Then $n=|T|^{k-1}$ and we may assume that $G =  T^k.(\out(T)\times
\sym(k))$. For the final claim, notice that $n \geq 60$, so $\log n >
4$, and so it suffices to prove the first claim. 

If  $k = 2$ then $b\le 4$, as proved by Fawcett in \cite{diagonal}.
It is also proved in \cite{diagonal} that if $k
\ge 3$ then
\begin{equation}\label{eqdia}
b \leq \biggl\lceil \frac{\log k}{\log|T|}\biggr\rceil+2.
\end{equation}
If $3 \leq k \le |T|$ then $b \le 3$ and the result follows, so
assume that $k > 60$. Then $n \geq 60^{60}$, so $\log \log n > 8$, and hence
$$b \leq \frac{\log k}{\log 60} + 3 \leq \frac{\log \log n}{5} + 3
\leq \log \log n.$$
\end{proof}
We now consider product action type groups.

\begin{prop}\label{prop:product_action}
  Let $G
  \le \sym(\Omega)$ be a primitive
group of product action type and degree $n$. 
  If $G$ is not large base
  then  $b:=b(G)< \log
  n+1$.  
\end{prop}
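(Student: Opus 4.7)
Since $G$ is primitive of product action type, we may write $\Omega = \Gamma^k$ with $k \geq 2$ and $m := |\Gamma|$, so that $n = m^k$, and $G$ embeds in $H \wr \Sym(k)$ acting in product action, where $H \leq \Sym(\Gamma)$ is primitive of almost simple or diagonal type (in particular $m \geq 5$, with $m \geq 60$ in the diagonal case). The hypothesis that $G$ is not large base forces $H$ not to be large base either.

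The plan is to first establish the standard product-action inequality
\[
b(G) \leq b(H) + \lceil \log_m k \rceil.
\]
Indeed, a $c$-tuple of points in $\Omega^c$ may be displayed as a $c \times k$ array over $\Gamma$; it is a base for $H \wr \Sym(k)$ precisely when each of its $k$ columns is a base for $H$ on $\Gamma$ and the $k$ columns are pairwise distinct elements of $\Gamma^c$. Since extending a base for $H$ by additional coordinates yields another base, the number of base $c$-tuples for $H$ on $\Gamma$ is at least $m^{c-b(H)}$ whenever $c \geq b(H)$. Taking $c = b(H) + \lceil \log_m k \rceil$ then produces the $k$ pairwise distinct base $c$-tuples needed to serve as columns, and since $b(G) \leq b(H \wr \Sym(k))$, the displayed inequality follows.

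Next, I would bound $b(H)$ via Proposition~\ref{prop:SD} in the diagonal case (which gives $b(H) < \log m$, so the bound on $b(G)$ is very comfortable) or via Theorem~\ref{thm:almost_simple} in the almost simple case. The latter yields $b(H) \leq \lceil \log m \rceil + 1$ with the sole exception of $H = \mathrm{M}_{24}$ acting on 24 points, where $b(H) = 7$. Substituting the generic bound gives
\[
b(G) \leq \lceil \log m \rceil + 1 + \lceil \log_m k \rceil,
\]
and a short calculation using $m \geq 5$ and $k \geq 2$ shows that this is strictly less than $k \log m + 1 = \log n + 1$. The tightest instance of the generic bound is $(m,k) = (5,2)$, where it reads $5 < 2\log 5 + 1 \approx 5.64$.

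The main obstacle will be the $\mathrm{M}_{24}$ exception, where $b(H) = 7$ exceeds $\lceil \log m \rceil + 1 = 6$. For $H = \mathrm{M}_{24}$ acting on $24$ points and any $k \geq 2$, I would verify the inequality $b(G) \leq 7 + \lceil \log k / \log 24 \rceil < k\log 24 + 1$ by direct inspection; for $k = 2$ this reads $8 < 10.17$, and larger $k$ are easier. This resolves all remaining cases.
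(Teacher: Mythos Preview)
Your characterization of bases for $H \wr \Sym(k)$ is wrong: pairwise distinct columns are not enough. If columns $j_1$ and $j_2$ are distinct base $c$-tuples lying in the same $H$-orbit, say $h \in H$ carries column $j_1$ to column $j_2$, then the wreath-product element with top part the transposition $(j_1\,j_2)$, with $h$ in coordinate $j_2$, $h^{-1}$ in coordinate $j_1$, and the identity elsewhere, fixes every row of the array. The correct condition is that the $k$ columns lie in pairwise distinct $H$-orbits on ordered $c$-tuples.

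Your inequality $b(G) \le b(H) + \lceil \log_m k \rceil$ is nonetheless salvageable, but the counting must change. Fix a single base $b(H)$-tuple $\beta$ and consider the $c$-tuples $(\beta, x)$ with $x$ ranging over $\Gamma^e$, where $e = \lceil \log_m k \rceil$. Any $h \in H$ sending $(\beta, x)$ to $(\beta, y)$ fixes $\beta$ entrywise, hence $h = 1$ and $x = y$; thus these $m^e \ge k$ tuples lie in pairwise distinct $H$-orbits and may serve as the columns. With this repair the remainder of your argument goes through.

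For comparison, the paper sidesteps the orbit issue entirely by quoting a construction of Burness and Seress: first $b(H)$ \emph{diagonal} points $(\gamma_i, \ldots, \gamma_i)$ reduce the stabilizer in $H \wr \Sym(k)$ to the top group $\Sym(k)$, and then a further $\lceil \lceil \log k\rceil / \lfloor \log m \rfloor \rceil$ points, encoding a distinguishing family of $2$-partitions of $\{1,\dots,k\}$, kill $\Sym(k)$. The resulting bound is marginally weaker than yours, and the paper finishes by splitting into the cases $k \le 4$ and $k \ge 5$ rather than isolating a single extremal pair $(m,k)$.
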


\begin{proof}  Without loss of generality, we may assume that $G=H\wr \sym(k),$  where
 $H\le  \sym(\Gamma)$ is primitive, and either $H$ is almost simple and not large base 
or $H$ is of diagonal type.  Let $|\Gamma|=m,$ so $n=m^k.$
Let $\{\gamma_1, \dots, \gamma_c\} \subseteq \Gamma$ be a base of
minimal size for the action of $H$ on $\Gamma$, and let
$\alpha_i':=(\gamma_i,\dots,\gamma_i)\in \Gamma^k=\Omega$ for $1\le
i\le c$.  It is shown in the proof of \cite[Proposition 3.2]{BuSe} that there
exists a set of $\lceil \log k \rceil$ $2$-partitions of $\{1,
\ldots, k\}$ such that the intersection in $\sym(k)$ of the
stabilizers of these partitions is trivial.  Let $a=\bigl\lceil \log k
\bigr\rceil$ and $r=\bigl\lfloor \log m \bigr\rfloor$. Then, as in the proof of \cite[Lemma 3.8]{BuSe},
there exists a subset $\{\alpha_1,\dots,\alpha_{\lceil
  a/r\rceil}\}$ of $\Omega$ with the property that an element
$g\in G$ which factorizes as $g=(1,\dots,1)\sigma,$ where $1\in H$ and $\sigma\in \sym(k),$ fixes each $\alpha_i$ if and only if $\sigma=1$. 
Hence, as noted in \cite[Equation (13)]{BuSe}, the set 
$$\BB:=\{\alpha_1,\dots,\alpha_{\lceil a/r\rceil}\}\cup 
\{\alpha_1',\dots,\alpha_c'\}$$ is a base for $G.$ 
 In particular, we deduce that
 \begin{equation}\label{eq:prod}b\le  \biggl\lceil \frac{\bigl\lceil\log
  k\bigr\rceil}{\bigl\lfloor \log m\bigr\rfloor
}\biggr\rceil+b(H,\Gamma). 
\end{equation}

From Theorem~\ref{thm:almost_simple} and Proposition~\ref{prop:SD},
we see that  either $b(H,\Gamma)\le \lceil\log
m\rceil+1 \le \log m + 2$, or $(H, m, b(H, \Gamma)) =
(\mathrm{M}_{24}, 24, 7)$.
In this lattter case
$$b\le  \biggl\lceil \frac{\lceil \log k \rceil}{\bigl\lfloor \log m\bigr\rfloor
}\biggr\rceil+b(H,\Gamma) \leq  \left( \frac{1 + \log k}{4} +1 \right)
+7 < k\log (24)+1\le \log n+1.$$
For the general case, assume first that $k \le 4$, so
that in particular $\lceil \log k \rceil \leq \lfloor \log m \rfloor$. Then by~\eqref{eq:prod}
$$b \leq 1 + b(H, \Gamma) \le  \log m + 3 < 2 \log m +
1 \le k \log m + 1 = \log n + 1.$$
If instead $k \ge 5$, then
\begin{align*}
b & \leq \biggl\lceil \frac{\lceil \log k \rceil}{\lfloor
               \log m \rfloor} \biggr\rceil+  \log m+2 
             \leq \frac{ 1 + \log k}{\lfloor \log m \rfloor} + \log m + 3
            \leq \left( \frac{1 + \log k}{2} + 2 \right) + \log m + 1\\
             & < (k-1) + \log m + 1
             < k \log m  + 1 = \log n + 1
\end{align*}
as required.
\end{proof}

Finally, we state and prove a slightly more detailed version of
Theorem~\ref{thm:main}.

\begin{thm}\label{thm:final}
Let $G$ be a primitive subgroup of $\sym(\Omega)$ with $|\Omega|=n$. Assume that
$G$ is not large base. Then $b:= b(G) \geq \log n + 1$ if and only
if $G$ is
one of the following.
\begin{enumerate}
\item[(i)] A subgroup of $\AGL_d(2)$, with $b = d + 1 = \log n + 1$.
\item[(ii)] The group $\Sp_d(2)$, acting on the cosets of $\GO^-_d(2)$ with $d\ge 4$, for
  which $\log n + 1 < b  = \lceil \log n \rceil +1$. 
\item[(iii)] A Mathieu group $\mathrm{M}_n$ in its natural permutation
  representation with $n\in \{12,23,24\}$. If $n = 12$ or $23$ then $b = \lceil \log n
  \rceil + 1$, while if $n = 24$ then $b = 7 > \lceil \log n \rceil
  + 1$.
\end{enumerate}
\end{thm}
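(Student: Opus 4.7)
The plan is to proceed by the O'Nan--Scott classification, which divides finite primitive permutation groups into five types: affine, almost simple, simple diagonal, product action, and twisted wreath. Four of these types are quickly disposed of: the almost simple case is exactly Theorem~\ref{thm:almost_simple}, yielding precisely families (ii) and (iii); Proposition~\ref{prop:SD} gives $b(G) < \log n$ in the simple diagonal case; and Proposition~\ref{prop:product_action} gives $b(G) < \log n + 1$ for non-large-base product action groups. For twisted wreath type, the socle is $T^k$ with $T$ non-abelian simple and $k \geq 6$, so $n = |T|^k \geq 60^6$ and $\log n > 35$; standard base-size bounds for this class (obtainable by combining the Halasi--Liebeck--Mar\'oti bound with a product-style argument modelled on the proof of Proposition~\ref{prop:product_action}, or from Fawcett's work on the Cameron--Kantor conjecture) give $b(G) \leq O(\log \log n)$, safely below $\log n + 1$.

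For the affine case, let $G \leq \AGL_d(p)$ be primitive on $V = \BBF_p^d$, so $n = p^d$. The set $\{0, e_1, \ldots, e_d\}$ is always a base: any element fixing each of these points lies in the point stabiliser $G_0 \leq \GL_d(p)$ and acts trivially on a basis of $V$, hence is the identity. Therefore $b(G) \leq d+1$. If $p$ is odd then $\log p > 1$, so $b(G) \leq d+1 < d\log p + 1 = \log n + 1$ for every $d \geq 1$. If $p = 2$ then $\log n + 1 = d+1$, so $b(G) \geq \log n + 1$ is equivalent to $b(G) = d+1$, which characterises exactly the primitive subgroups of $\AGL_d(2)$ whose point stabiliser requires $d$ vectors to trivialise pointwise; this is case (i).

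The ``if'' direction is then a routine summary: case (i) is witnessed by $G = \AGL_d(2)$ itself, where $G_0 = \GL_d(2)$ needs $d$ linearly independent vectors to trivialise, so $b = d+1$; case (ii) is the content of Proposition~\ref{prop:sp_on_orth}; and the base sizes of the three Mathieu groups in case (iii) are classical (see, e.g., \cite{BurnessOBrienWilson10}). The main obstacle is the twisted wreath case: it is the only O'Nan--Scott type not treated in the body of the paper, and although the required inequality $b(G) < \log n + 1$ is extremely loose thanks to the enormous size of $n = |T|^k$, a rigorous proof must appeal to external base-size literature for twisted wreath primitive groups rather than to a self-contained argument from the results developed here.
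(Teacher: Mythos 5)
Your treatment of the affine, almost simple, diagonal, and product action cases mirrors the paper's proof and is correct (the affine argument in particular is fine: $\{0,e_1,\dots,e_d\}$ is a base for $\AGL_d(p)$, hence for any subgroup, giving $b \le d+1$, which forces $p=2$ if $b \ge \log n+1$). However, you have a genuine gap in the twisted wreath case, which you yourself flag. The missing idea is not external base-size literature for twisted wreath groups; it is a containment result already in the paper's bibliography. By Praeger~\cite[Section 3.6]{18}, every primitive group of twisted wreath type is a subgroup of a primitive product action group $H \wr P \le \sym(\Omega)$ on the same set $\Omega$, where $H$ is of diagonal type. Since $H$ is of diagonal type, $H \wr P$ is automatically not large base, so Proposition~\ref{prop:product_action} gives $b(H \wr P) < \log n + 1$, and a base for $H \wr P$ is a base for the subgroup $G$. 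Hence $b(G) \le b(H \wr P) < \log n + 1$, and no appeal to any claimed $O(\log\log n)$ bound, to a minimum value of $k$, or to Fawcett's diagonal-type result is needed. Your sketch toward such a bound (via Halasi--Liebeck--Mar\'oti plus ``a product-style argument'') is not a proof, and the citation to Fawcett is not apposite since that paper treats diagonal groups rather than twisted wreath groups. Replacing that paragraph with the Praeger containment argument closes the gap and makes the proof self-contained in the sense the paper intends.
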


\begin{proof}
  We work through the cases of the O'Nan-Scott Theorem.

  If $G$ is of affine type, then without loss of generality $G=
  \AGL_d(p)$ with $n = p^d$, and the point stabiliser of $G$ is
  $\GL_d(p)$, acting naturally on the set $\Omega = \BBF_p^d$. 
Let $\BB$ be a base of minimal size for $\GL_d(p)$ on $\Omega$. Then
$\BB$ is a basis for $\BBF_q^d$, so $b = |\BB| + 1 = d+1$ as
required. 

If $G$ is of twisted wreath product type, then by
\cite[Section 3.6]{18} the group $G$ is a subgroup of a
primitive product action  group $H \wr P \le \sym(\Omega)$,
with $H$ of diagonal type. Hence the result follows from 
 Proposition~\ref{prop:product_action}.

If $G$ is almost simple, or of diagonal type, or of product action
type,
then the result follows from
Theorem~\ref{thm:almost_simple}, Proposition~\ref{prop:SD} or
Proposition~\ref{prop:product_action}, respectively.
\end{proof}

We conclude with a question.

\begin{question}
  Which primitive groups $G \leq \Sym(n)$ satisfy $b(G) = \log n+ 1$?
\end{question}

Notice that such a $G$ must be a subgroup of $\AGL_d(2)$ for some $d$,
and that if $d$ is even then groups such as $2^{d}:\Sp(d,2)$ have this property. 

\bibliographystyle{amsplain}

\end{document}